\documentclass[11pt]{article}
\usepackage[utf8]{inputenc}

\usepackage[top=1in, bottom=1in, left=1in, right=1in]{geometry}

\usepackage[utf8]{inputenc} %
\usepackage[T1]{fontenc}    %

\usepackage{url}            %
\usepackage{booktabs}       %
\usepackage{amsfonts}       %
\usepackage{nicefrac}       %
\usepackage{microtype}      %
\usepackage{xcolor}         %
\usepackage[skip = 3.5pt plus1pt, indent = 17pt]{parskip}        %

\usepackage[american]{babel}
\usepackage[utf8]{inputenc} %
\usepackage[T1]{fontenc}    %
\usepackage{csquotes}
\usepackage{hyperref}       %
\usepackage{url}            %
\usepackage{booktabs}       %
\usepackage{amsfonts}       %

\usepackage{nicefrac}       %
\usepackage{microtype}      %
\usepackage{xcolor}         %
\usepackage{array}          %
\usepackage{tikz}
\usetikzlibrary{calc,positioning,decorations.pathreplacing,shapes}

\usepackage{amsmath}
\usepackage{amsthm}
\usepackage{amssymb}
\PassOptionsToPackage{normalem}{ulem}
\usepackage{ulem}
\usepackage{dsfont}
\usepackage[noabbrev, capitalise]{cleveref}
\usepackage{thmtools} 
\usepackage{thm-restate}

\usepackage[hide=false,setmargin=true,marginparwidth=0.8in]{marginalia}

\usepackage{enumitem}
\usepackage{xspace}
\usepackage{graphicx}
\usepackage{subcaption}

\usepackage[%
    minnames=1,maxnames=99,maxcitenames=2,
    style=numeric-comp,
    sorting=none,
    sortcites=true,
    doi=true,
    url=true,
    uniquename=init,
    giveninits=true,
    backend=biber,
    hyperref=auto,
    natbib=true]{biblatex}

\DeclareFieldFormat{bibentrysetcount}{\mkbibparens{\mknumalph{#1}}}
\DeclareFieldFormat{labelnumberwidth}{\mkbibbrackets{#1}}

\renewbibmacro{in:}{%
  \ifentrytype{article}{}{\printtext{\bibstring{in}\intitlepunct}}}
\renewbibmacro{in:}{%
  \ifentrytype{inproceedings}{}{\printtext{\bibstring{in}\intitlepunct}}}

\DeclareNameAlias{sortname}{family-given}

\theoremstyle{plain}
\newtheorem{theorem}{\protect\theoremname}[section]
  \theoremstyle{plain}
  \newtheorem{proposition}[theorem]{\protect\propositionname}
  \theoremstyle{plain}
  \newtheorem{corollary}[theorem]{\protect\corollaryname}
  \theoremstyle{remark}
  \newtheorem{remark}[theorem]{\protect\remarkname}
  \theoremstyle{plain}
  
  \theoremstyle{plain}
  \newtheorem{lemma}[theorem]{\protect\lemmaname}
  \theoremstyle{plain}

\usepackage{mathtools}

\usepackage{contour}
\usepackage[normalem]{ulem}

\crefname{assumption}{Assumption}{Assumption}

\crefname{example}{Example}{Example}

\crefname{definition}{Definition}{Definition}

\makeatother
\crefformat{equation}{(#2#1#3)}

\numberwithin{equation}{section}

\contourlength{0.8pt}

\renewcommand{\underline}[1]{%
  \uline{\phantom{#1}}%
  \llap{\contour{white}{#1}}%
}

\definecolor{OliveGreen}{rgb}{0,0.6,0}
\definecolor{JaumeBlue}{rgb}{0,0,0.6}

\usepackage{babel}
  \providecommand{\conjecturename}{Conjecture}
  \providecommand{\corollaryname}{Corollary}
  \providecommand{\lemmaname}{Lemma}
  \providecommand{\propositionname}{Proposition}
  \providecommand{\remarkname}{Remark}
\providecommand{\theoremname}{Theorem}

\global\long\def\im{\text{Im}}

\global\long\def\di{\partial}

\global\long\def\Wout{W_{\text{out}}}

\DeclareMathOperator\vecop{vec}

\DeclareMathOperator\GL{GL}
\DeclareMathOperator\Tr{Tr}
\DeclareMathOperator\SPD{SPD}
\DeclareMathOperator\Sym{Sym}
\DeclareMathOperator\diag{diag}

\DeclareMathOperator\grad{grad}

\title{Geometric Dyson Brownian Motions and the Free Log-Normal Limit for a Non-Square Gaussian Matrix Product
} 

\author{
  Mufan Li
  \thanks{
    University of Waterloo and Vector Institute, 
    \texttt{mufan.li@uwaterloo.ca}
  }
  \and 
  Jaume de Dios Pont 
  \thanks{
    New York University, 
    \texttt{jdedios@nyu.edu}
  }
  \and 
  Mihai Nica 
  \thanks{
    University of Guelph and Vector Institute, 
    \texttt{nicam@uoguelph.ca}
  }
  \and 
  Daniel M. Roy
  \thanks{
    Google DeepMind, University of Toronto, and Vector Institute, 
    \texttt{daniel.roy@utoronto.ca}
  }
}

\date{}

\begin{document}

\maketitle

\vspace{-1em}
\begin{center}
\normalfont\itshape Dedicated to the memory of Tatsuhiro Onodera
\end{center}
\vspace{1em}

\begin{abstract}
We study the squared singular value spectrum of a non-square product of independent real Gaussian matrices, equivalently the feature covariance spectrum of a deep linear neural network at initialization.
Starting from the fixed-$m$ covariance diffusion previously obtained in the proportional depth-width limit, we record an equivalent matrix realization, describe its affine invariance, and derive the interacting diffusion satisfied by its eigenvalues.
We then take a second limit, sending $m\to\infty$ on the accelerated spectral clock $\tau=mt$, which corresponds in this sequential construction to the relation $dm/n\to\bar\tau$.
We establish convergence of the empirical spectral measure path to a deterministic mean-field limit and derive a closed Burgers equation for its $T$-transform.
Together with the proportional depth-width limit, these results give a rigorous sequential route from the deep non-square Gaussian product to the free log-normal limit of its feature covariance spectrum; for more general initial laws, the transform yields a free multiplicative convolution form.
We further analyze the support of the free log-normal law, give a fixed point iteration for numerical evaluation and a formal Marchenko--Pastur approximation at small time, and use the limiting spectrum to predict the risk in a toy random feature model.
\end{abstract}

\section{Introduction}

The recent success of deep neural networks has motivated the study of products of large random matrices \citep{hanin2019products,hanin2019finite}. 
In particular, a basic but not well understood example is the deep linear network 
\begin{equation}
    h_1 = \frac{1}{\sqrt{n_0}} W_0 x \,, \quad
    h_{\ell+1} = \frac{1}{\sqrt{n}} W_\ell h_\ell \,, \quad \ell\in[d-1],
\end{equation}
where $W_0\in\mathbb{R}^{n\times n_0}$, $W_\ell\in\mathbb{R}^{n\times n}$ for $\ell\in[d-1]$, and $x\in\mathbb{R}^{n_0\times m}$ is the input data set with $m$ points.
Here we can interpret $h_\ell \in \mathbb{R}^{n\times m}$ as a hidden layer of features of width $n$, and the whole neural network has depth $d$. 
An output layer may be fitted on top of the final features $h_d$ for a downstream task. 
In this work, we consider the network at initialization, i.e., all the weight matrices have entries $W_{\ell,ij} \overset{\mathrm{i.i.d.}}{\sim} \mathcal{N}(0,1)$.

While this setting reduces to a product of random matrices, it is still helpful to view the network as a recursion in the layer index. 
For example, when conditioned on the previous layer $h_\ell$, the next layer is Gaussian with an explicit covariance structure 
\begin{equation}
    \vecop( h_{\ell+1} ) \, | \, h_\ell 
    \overset{d}{=} 
    \mathcal{N}\left( 0 , \Phi_\ell \otimes I_n \right) \,, 
\end{equation}
where $\vecop:\mathbb{R}^{n\times m} \to \mathbb{R}^{nm}$ flattens the matrix into a column vector by stacking the columns from left to right, $\Phi_\ell = \frac{1}{n} h_\ell^\top h_\ell$ is the feature covariance matrix, and $\otimes$ denotes the Kronecker product. 

Furthermore, if we fit the output layer $\Wout$ using the random features $h_d$, then a central object in the analysis of generalization error is the spectrum of the feature covariance $\Phi_d = \frac{1}{n} h_d^\top h_d$ \cite{louart2018random,mei2022generalization,schroder2023deterministic,bosch2023precise,ghorbani2021linearized,xiao2022precise}.
The spectral viewpoint gives a simple way to connect the limiting law to downstream statistical performance.
In a random-feature model, the ridge prediction error is a linear spectral statistic of $\Phi_d$, so the limiting spectral law yields an explicit risk prediction.
Ultimately, we expect the spectrum of $\Phi_d$ to play an important role in studying trained neural networks as well.

Motivated by this long-term goal, we study the spectrum of $\Phi_d$ at initialization through a two-stage sequential limiting procedure.
The first stage is the fixed-$m$ proportional depth-width limit established in \citep{li2022neural}.
For each fixed number of data points $m$ and input dimension $n_0 \geq m$, that result sends $d,n\to\infty$ with
\begin{equation}
    \frac{d}{n}\to T>0
\end{equation}
and shows that the continuous interpolation of the covariance chain converges to a covariance diffusion.
We take this finite-$m$ diffusion as the starting point of the present analysis.
Writing $t := \ell/n \in [0,T]$ for depth time, we denote the resulting covariance process by $\Phi_t$.
The second limit is a large-$m$ spectral limit.
As will follow from the eigenvalue SDE in \cref{thm:geom_dyson}, the eigenvalue drift contains $m-1$ interaction terms, so the empirical spectral distribution evolves on the accelerated clock
$\tau := mt = \ell m/n$.
At spectral time $\tau$, the finite-$m$ covariance diffusion is observed at depth time $t=\tau/m$.

Within this two-stage construction, terminal spectral time $\bar\tau$ is represented at the discrete level by the relation
\begin{equation}
    \frac{dm}{n}\to \bar\tau .
\end{equation}
This identifies the clock of the sequential limit; it is not a direct triangular-array theorem in which $d,n,m$ diverge simultaneously.
Accordingly, if $\lambda_{i,t}$ denotes the $i$th eigenvalue of $\Phi_t$, we define the spectral-time particles and empirical spectral distribution by
\begin{equation}
\label{eq:intro-empirical-measure-timechanged}
    \mu_{i,\tau}
    :=
    \lambda_{i,\tau/m}
    =
    \lambda_i(\Phi_{\tau/m}),
    \qquad
    \rho_\tau^{(m)}
    :=
    \frac{1}{m}\sum_{i=1}^m
    \delta_{\mu_{i,\tau}} \,,
    \qquad
    \tau \in [0,\bar\tau] \,.
\end{equation}
Equivalently, $\mu_{i,\tau}$ is the $i$th eigenvalue of the finite-$m$ covariance diffusion $\Phi_{\tau/m}$ observed at depth time $t=\tau/m$.

\subsection{Main Results}
\label{subsec:main_results}

We begin with the finite-$m$ covariance diffusion obtained in \citet{li2022neural}. Under the first scaling $d,n \to \infty$ with $d/n \to T>0$ and $m,n_0$ fixed, the continuous-time interpolation of the covariance chain converges to the solution of the stochastic differential equation (SDE)
\begin{equation}
\label{eq:cov_sde}
    d\Phi_t = \Sigma^{1/2}(\Phi_t) \, dB_t \,,
    \quad
    \Phi_0 = \frac{1}{n_0} x^\top x \,, 
\end{equation}
where we treat the unique upper triangular entries of $\Phi$ as a vector in $\mathbb{R}^{m(m+1)/2}$, and the diffusion matrix has entries $\Sigma(\Phi)_{\alpha\beta,\gamma\delta} = \Phi^{\alpha\gamma}\Phi^{\beta\delta} + \Phi^{\alpha\delta}\Phi^{\beta\gamma}$. 
Our analysis starts from this diffusion. In \cref{thm:lin_cov_sde}, we record and prove the equivalent matrix representation $d\Phi_t=\Phi_t^{1/2}\,dS_t\,\Phi_t^{1/2}$, where $S_t$ is a symmetric Brownian motion. We then develop its affine-invariant consequences, project the diffusion to its eigenvalues, and study the sequential large-$m$ limit.

Unless explicitly stated otherwise, throughout the paper all finite-$m$ covariance processes are initialized in the positive definite cone:
\begin{equation}
    \Phi_0 \in \SPD(m) .
\end{equation}
In the neural-network parametrization $\Phi_0 = n_0^{-1}x^\top x$, this is equivalent to assuming that $x \in \mathbb{R}^{n_0 \times m}$ has full column rank $m$, and hence $m \leq n_0$. 
Degenerate finite-$m$ initial covariances are not treated in this paper.
\smallskip

Within this finite-$m$ diffusion, the equivalent matrix form reveals an affine-invariant symmetry (\cref{lm:markov_chain_affine_invar}).
In particular, if $\Phi_0 = QQ^\top$, then the solution started from $\Phi_0$ is equal in law to $Q \Phi_t^{(I_m)} Q^\top$, where $\Phi_t^{(I_m)}$ denotes the solution started from $I_m$; see \cref{rm:init_identity}.
This symmetry also allows us to show that $\Sigma^{-1}$ is the affine-invariant (Riemannian) metric for symmetric positive definite matrices (\cref{prop:sigma_affine_invar}) 
\begin{equation}
\label{eq:intro_affine_metric}
    g_\Phi(A,B) = \frac{1}{2} \Tr(\Phi^{-1} A \Phi^{-1} B) \,, 
    \quad 
    \Phi \in \SPD(m) \text{ and } A,B \in \Sym(m) \,. 
\end{equation}
Consequently, $\Phi_t$ also admits an information-geometric interpretation as Brownian motion associated with the dual connection; see \cref{rm:dual_brownian} and \cref{sec:appendix-dual-brownian}.

Projecting the covariance diffusion to its ordered eigenvalues gives the following multiplicative interacting particle system, a geometric variant of Dyson Brownian motion \citep{dyson1962brownian}.

\begin{restatable}[Geometric Dyson Brownian Motions]{theorem}{geomdbm}
\label{thm:geom_dyson}
Let $\Phi_t$ solve the covariance SDE of \cref{thm:lin_cov_sde} with $\Phi_0 \in \SPD(m)$, and let $\lambda_{1,t} \leq \lambda_{2,t} \leq \cdots \leq \lambda_{m,t}$ be the ordered eigenvalues of $\Phi_t$. Then the eigenvalues satisfy, locally on $(0,\infty)$,
\begin{equation}
\label{eq:gdbm_sde}
    d\lambda_{i,t}
    =
    \sqrt{2}\,\lambda_{i,t}\,dB_{i,t}
    + \sum_{\substack{j=1 \\ j\neq i}}^m
        \frac{\lambda_{i,t}\lambda_{j,t}}
             {\lambda_{i,t}-\lambda_{j,t}}\,dt,
    \qquad
    i = 1,2,\ldots,m .
\end{equation}
where $B_1,\ldots,B_m$ are independent standard Brownian motions.
\end{restatable}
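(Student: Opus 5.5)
The plan is to start from the matrix form of \cref{thm:lin_cov_sde}, $d\Phi_t = \Phi_t^{1/2}\,dS_t\,\Phi_t^{1/2}$, with $S_t$ a symmetric Brownian motion, and apply the standard second-order perturbation formula for simple eigenvalues of a symmetric matrix. Since $\Phi_0\in\SPD(m)$, the process stays in $\SPD(m)$. On the open set where the eigenvalues are distinct, the map $\Phi\mapsto\lambda_i(\Phi)$ is smooth. Its first derivative is $\partial\lambda_i[H]=u_i^\top H u_i$, and its Hessian is $\partial^2\lambda_i[H,H]=2\sum_{j\neq i}(u_i^\top H u_j)^2/(\lambda_i-\lambda_j)$, where $u_i$ are orthonormal eigenvectors. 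Itô's formula then gives, locally (up to the first collision time, which is what ``locally on $(0,\infty)$'' is meant to cover),
\begin{equation*}
d\lambda_i = u_i^\top d\Phi\, u_i + \sum_{j\neq i}\frac{d\langle u_i^\top \Phi u_j\rangle}{\lambda_i-\lambda_j}.
\end{equation*}

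Next I will compute the martingale increments in the eigenbasis. Writing $\Phi=U\Lambda U^\top$, we have $\Phi^{1/2}=U\Lambda^{1/2}U^\top$, so $u_i^\top d\Phi\, u_j=\sqrt{\lambda_i\lambda_j}\,d\tilde S_{ij}$ with $\tilde S := \int U^\top dS\, U$. By orthogonal invariance of the symmetric Brownian motion (Lévy's characterization applied to the rotated process with predictable $U$), $\tilde S$ is again a symmetric Brownian motion with the same normalization as $S$. I need to fix that normalization from \cref{thm:lin_cov_sde} so that it matches $\Sigma$. The entry $\alpha=\beta=\gamma=\delta$ gives $d\langle\Phi^{\alpha\alpha}\rangle=2(\Phi^{\alpha\alpha})^2dt$, which means diagonal entries of $S$ have quadratic variation $2\,dt$ and off-diagonal entries have variation $dt$. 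Then $d\lambda_i=\lambda_i\,d\tilde S_{ii}=\sqrt2\,\lambda_i\,dB_i$, with $B_i:=\tilde S_{ii}/\sqrt2$ independent standard Brownian motions. For $j\neq i$ we get $d\langle u_i^\top\Phi u_j\rangle=\lambda_i\lambda_j\,dt$, which yields exactly the drift $\sum_{j\neq i}\lambda_i\lambda_j/(\lambda_i-\lambda_j)$.

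The main obstacle is making the Itô computation rigorous: the eigenvalues must be simple for $\lambda_i$ to be a $C^2$ function of $\Phi$. I will handle this by working up to the stopping time $\zeta_\epsilon$ at which a spectral gap drops below $\epsilon$ or an eigenvalue leaves $[\epsilon,1/\epsilon]$. On $[0,\zeta_\epsilon)$ the formula holds, and letting $\epsilon\to0$ gives the statement up to the collision time. This is the meaning I take for ``locally''.

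If the initial condition has repeated eigenvalues, I will use instant separation for $t>0$. I would try to show it either by the affine invariance (\cref{rm:init_identity}) together with absolute continuity of the law of $\Phi_t$ for $t>0$, or by noting that at a collision the repulsive drift dominates, as in classical Dyson Brownian motion. A full proof of non-collision (for instance via a McKean-type argument on $\sum_{i<j}\log(\lambda_j-\lambda_i)$) is the harder part. The stated ``locally'' qualifier suggests it is enough to establish the SDE on intervals where the eigenvalues are distinct.
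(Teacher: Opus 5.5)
\paragraph{Assessment.} Your It\^o computation is correct and coincides with the paper's. It uses the Hadamard variation formulas, the passage to the eigenbasis, $u_i^\top d\Phi\,u_j=\sqrt{\lambda_i\lambda_j}\,d\tilde S_{ij}$, and the quadratic variations $2\lambda_i^2\,dt$ and $\lambda_i\lambda_j\,dt$. Your rotated process $\tilde S=\int U^\top dS\,U$ with L\'evy's characterization is in fact a cleaner justification of the change of basis than the paper's appeal to orthogonal invariance.

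\paragraph{The gap.} You read ``locally on $(0,\infty)$'' as ``up to the first collision time'' and treat instantaneous separation and non-collision as optional. That reading makes the theorem vacuous in the paper's central case $\Phi_0=I_m$, which is the start behind $\rho_0=\delta_1$ and the free log-normal law. There all eigenvalues coincide at $t=0$, so the first collision time is $0$. The qualifier excludes only $t=0$, where the drift is singular. What is asserted is that the SDE holds on every $[\varepsilon,\infty)$. This is also what is used later: the determinant law and the mean-field weak equation are obtained on $[\varepsilon,t]$ and then $\varepsilon\downarrow 0$.

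The paper's proof opens by invoking \cref{lem:instantaneous-simplicity-no-collision}, which supplies exactly what you leave out:
\begin{enumerate}
\item[(i)] For fixed $\varepsilon>0$, $\Phi_\varepsilon$ has simple spectrum a.s. The identity $\sum A_{\alpha\beta}\Sigma(\Phi)_{\alpha\beta,\gamma\delta}A_{\gamma\delta}=2\|\Phi^{1/2}A\Phi^{1/2}\|_F^2>0$ makes the diffusion elliptic on $\SPD(m)$. Hence $\Phi_\varepsilon$ has a Lebesgue density on $\Sym(m)$, while the repeated-eigenvalue set is the null zero set of a discriminant.
\item[(ii)] From a simple-spectrum start, the collision time is a.s.\ infinite. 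The paper cites Graczyk--Ma\l{}ecki for this.
\end{enumerate}
The Markov property at rational $\varepsilon$ then gives the SDE for all positive times.

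\paragraph{Closing step (ii).} This step is easier than you expect, and the McKean functional you name closes it. For $i<j$, It\^o's formula applied to \eqref{eq:gdbm_sde} shows that the two-body drift of $\log(\lambda_j-\lambda_i)$ is $\bigl(2\lambda_i\lambda_j-\lambda_i^2-\lambda_j^2\bigr)/(\lambda_j-\lambda_i)^2=-1$. The three-body terms $-\lambda_k^2/\bigl((\lambda_k-\lambda_i)(\lambda_k-\lambda_j)\bigr)$ sum to $-1$ over each triple, since this is minus the second divided difference of $x^2$. Hence on $[0,\zeta)$, $\Psi=\sum_{i<j}\log(\lambda_j-\lambda_i)$ equals $\Psi_0-\bigl(\binom{m}{2}+\binom{m}{3}\bigr)t$ plus a continuous local martingale. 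On $\{\zeta<\infty\}$, continuity of $\Phi_t$ keeps all gaps bounded while one of them tends to $0$. The local martingale would then have to tend to $-\infty$ in finite time, which is impossible.

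\paragraph{Step (i) still needs work.} Separation from a degenerate start genuinely requires an absolute-continuity argument of the kind above. The ``repulsion dominates'' heuristic does not by itself separate a degenerate initial spectrum.
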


Following \citet{ipsen2017mayWigner}, we call this a \emph{geometric} variant because it modifies geometric Brownian motions by adding an interaction drift.
A Doob-transform interpretation depends on the noise normalization; see \cref{subsec:log_dyson_calc}.
We can additionally calculate the law of the determinant ratio
\begin{equation}
    \log \frac{\det \Phi_t}{\det \Phi_0}
    \sim
    \mathcal{N}\left(
        -\frac{m(m+1)}{2}t \,,\, 2mt
    \right).
\end{equation}

The finite dimensional radial process admits an exact identification with the ellipsoid diffusion of \citet{norris1986brownian}, but the connection is subtle and is not apparent in the covariance coordinates in which our process arises. This finite dimensional identification does not provide the random product origin of the covariance diffusion or the large-$m$ spectral analysis below. We give the precise correspondence in \cref{subsec:log_dyson_calc}.

We next study the large-$m$ empirical spectrum of this covariance diffusion using the $T$-transform \cite[Section~16.3]{potters2020first}. For a probability measure $\rho$ on $\mathbb{R}_{\geq 0}$ and $z\in \mathbb{C} \setminus \mathbb{R}_{\geq 0}$, define
\begin{equation}
\label{eq:t_transform}
    G(z) 
    = \int_{\mathbb{R}} \frac{x}{z-x} \, d\rho(x) 
    = z g(z) - 1 \,, 
\end{equation}
where $g(z) = \int_{\mathbb{R}} \frac{1}{z-x} \, d\rho(x)$ is its Cauchy--Stieltjes transform.

Using the accelerated spectral clock and time-changed empirical measure in \cref{eq:intro-empirical-measure-timechanged}, we obtain the following deterministic global limit for the unscaled covariance spectrum.
\begin{restatable}[Sequential Mean-Field Limit and Burgers Equation]{theorem}{burgersfp}
\label{thm:burgers_fixed_point}
\label{thm:burgers-fixed-point}
Let $\rho^{(m)}_\tau := \frac{1}{m}\sum_{i=1}^m\delta_{\mu_{i,\tau}}$ be the empirical spectral distribution of the time-changed covariance $\Phi_{\tau/m}$. Assume that $\rho_0^{(m)} \xrightarrow{d} \rho_0$ as $m\to\infty$ and $\sup_m\langle x,\rho_0^{(m)}\rangle<\infty$.
Then $(\rho_\tau^{(m)})_{\tau\in[0,\bar\tau]}$ converges in probability in $C([0,\bar\tau],\mathcal{P}(\mathbb{R}_{\geq 0}))$ to a deterministic continuous path $(\rho_\tau)_{\tau\in[0,\bar\tau]}$. 

For each $z\in\mathbb{C}\setminus\mathbb{R}_{\geq 0}$, the $T$-transform $G_\tau(z):=\int x(z-x)^{-1}\rho_\tau(dx)$ satisfies
\begin{equation}
\label{eq:burgers_type_pde}
    \partial_\tau G_\tau(z)=-zG_\tau(z)\partial_zG_\tau(z),
    \qquad
    G_0(z)=\int \frac{x}{z-x}\,\rho_0(dx).
\end{equation}
\end{restatable}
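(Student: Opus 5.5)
Time-changing \cref{eq:gdbm_sde} by $t=\tau/m$ gives, for the spectral-time particles,
\begin{equation*}
    d\mu_{i,\tau}=\sqrt{2/m}\,\mu_{i,\tau}\,d\tilde B_{i,\tau}+\frac{1}{m}\sum_{j\neq i}\frac{\mu_{i,\tau}\mu_{j,\tau}}{\mu_{i,\tau}-\mu_{j,\tau}}\,d\tau ,
\end{equation*}
with $\tilde B_i$ independent standard Brownian motions. The plan is the standard Dyson-type mean-field argument: compute the semimartingale decomposition of $\langle f,\rho^{(m)}_\tau\rangle$ for suitable test functions, prove tightness of the measure-valued paths in $C([0,\bar\tau],\mathcal{P}(\mathbb{R}_{\ge0}))$, identify every subsequential limit as a solution of the limiting equation, and prove uniqueness of that solution. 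The convenient test functions are $f_z(x)=x/(z-x)$ for $z\in\mathbb{C}\setminus\mathbb{R}_{\ge0}$, since they give $G$ directly.

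\textbf{Step 1 (Itô computation).} Apply Itô's formula to $\frac1m\sum_i f(\mu_i)$. The drift symmetrizes to
\begin{equation*}
    \frac{1}{2m^2}\sum_{i\neq j}\mu_i\mu_j\,\frac{f'(\mu_i)-f'(\mu_j)}{\mu_i-\mu_j}\,d\tau ,
\end{equation*}
which equals $\frac12\iint xy\,\frac{f'(x)-f'(y)}{x-y}\,\rho(dx)\rho(dy)$ up to the removed diagonal. The diagonal contributes $O(1/m)\langle x^2|f''|,\rho\rangle$. The Itô correction is $\frac1{m}\langle x^2 f'',\rho\rangle$, also $O(1/m)$. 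The martingale has quadratic variation $\frac{2}{m^2}\langle x^2 f'^2,\rho\rangle\,d\tau$. For $f=f_z$, the functions $xf_z'(x)=xz/(z-x)^2$ and $x^2f_z''$ are bounded on $\mathbb{R}_{\ge0}$ for fixed $z$ off the half-line, so all error terms are $O(1/m)$ uniformly. By Doob's inequality the martingale term is $O(m^{-1})$ in $L^2$.

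\textbf{Step 2 (tightness and a priori bounds).} Use $\langle x,\rho^{(m)}_\tau\rangle=\frac1m\Tr\Phi_{\tau/m}$. From \cref{thm:lin_cov_sde}, $\Tr\Phi$ is a martingale, so its expectation stays bounded by $\sup_m\langle x,\rho_0^{(m)}\rangle$, and a maximal inequality controls the sup over $\tau$. This gives compact containment via Markov's inequality. Equicontinuity of $\tau\mapsto\langle f_z,\rho_\tau^{(m)}\rangle$ for a countable dense set of $z$ follows from Step 1: the drift is bounded and the martingale is small. These $z$ determine the measure, so tightness follows.

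\textbf{Step 3 (limit equation).} Compute the double integral for $f_z$. With $f_z'(x)=z/(z-x)^2$, the integrand is
\begin{equation*}
    xy\,z\,\frac{(z-y)^2-(z-x)^2}{(x-y)(z-x)^2(z-y)^2}=xyz\,\frac{2z-x-y}{(z-x)^2(z-y)^2},
\end{equation*}
which factorizes. Using $\int\frac{x}{(z-x)^2}\rho(dx)=-\partial_zG$, the limiting drift becomes $-zG\,\partial_zG$, giving \cref{eq:burgers_type_pde}. One should double check the factor $\frac12$ against the symmetrization here.

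\textbf{Step 4 (uniqueness), the main obstacle.} This is where I expect the real difficulty. The equation is first order, so I would solve it by characteristics: $z(\tau)$ with $\dot z=zG$, along which $G$ is constant. This gives the implicit relation $G_\tau(z)=G_0\bigl(z e^{-\tau G_\tau(z)}\bigr)$. The task is to show that any family of $T$-transforms of probability measures on $\mathbb{R}_{\ge0}$, continuous in $\tau$ and satisfying the equation, must satisfy this relation. I would first restrict to $z$ on the negative real axis with $|z|$ large, where $G$ is small and analytic and the characteristic map is a contraction. Analytic continuation then extends uniqueness of the transform to the whole domain, and the transform determines $\rho_\tau$. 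Given uniqueness, the limit is deterministic, and convergence in distribution to a deterministic path upgrades to convergence in probability.
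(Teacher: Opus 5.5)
Your proposal is correct and follows the paper's proof essentially step for step. Both use It\^o's formula with the symmetrized kernel and $O(1/m)$ corrections, compact containment from the nonnegative (super)martingale $\frac1m\Tr\Phi_{\tau/m}$, identification of subsequential limits through $f_z$, and uniqueness via real characteristics on $(-\infty,0)$, where $G_s\in[-1,0]$ makes the backward characteristic move away from the origin and stay in a compact subinterval, followed by the identity theorem. The differences are minor: you use the $f_z$ family for tightness where the paper uses $C_c^\infty$ and the class $\mathcal F_b$, and you solve the implicit relation by a contraction at large $|u|$ where the paper uses the monotone bijection $H_\tau(v)=ve^{\tau G_0(v)}$ on all of $(-\infty,0)$.
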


For positive point-mass initial limits, \citet[Theorem~6]{maleckiPerez2022universality} prove the corresponding geometric-flow path limit. The theorem above treats arbitrary weak initial limits under the stated first-moment condition and proves uniqueness of the resulting path; \cref{subsec:related_work} gives the normalization and convergence-mode comparison. We then use the explicit $T$-transform equation to derive the inverse transform and free multiplicative convolution descriptions below.

Solving the Burgers equation by characteristics gives a particularly simple inverse transform description.

\begin{restatable}[Free Log-Normal Form]{corollary}{fixedpointflow}
\label{cor:fixed-point-free-flow}
When $\rho_0=\delta_1$, the limiting $T$-transform from \cref{thm:burgers_fixed_point} is the physical solution of
\begin{equation}
\label{eq:fixed_point}
    G_\tau(z)=\frac{1}{ze^{-\tau G_\tau(z)}-1},
    \qquad z\in\mathbb{C}\setminus\mathbb{R}_{\geq 0}.
\end{equation}
Denote the corresponding law by $\nu_\tau$. If, more generally, $\rho_0$ is compactly supported on $\mathbb{R}_{\geq 0}$ and has nonzero first moment, then $\rho_\tau=\rho_0\boxtimes\nu_\tau$.
\end{restatable}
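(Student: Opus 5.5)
The plan is to solve the Burgers-type equation \cref{eq:burgers_type_pde} by the method of characteristics. I will first verify that the implicit relation \cref{eq:fixed_point} is preserved along the flow. Then I will identify the general solution through the $S$-transform, which linearizes free multiplicative convolution.

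\textbf{Step 1 (characteristics).} Write the PDE as $\partial_\tau G + (\tau\text{-independent coefficient})\,\partial_z G = 0$ with velocity $zG$. Along a curve $\dot z_\tau = z_\tau G_\tau(z_\tau)$ we have $\frac{d}{d\tau}G_\tau(z_\tau)=0$. Hence $G$ is constant, equal to $G_0(z_0)$, and $z_\tau = z_0 e^{\tau G_0(z_0)}$. Therefore
\[
G_\tau(z)=G_0\!\left(z e^{-\tau G_\tau(z)}\right).
\]
For $\rho_0=\delta_1$ we have $G_0(w)=1/(w-1)$, which gives \cref{eq:fixed_point}.

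To make this rigorous without tracking characteristics, I will argue directly. Define $F(z,\tau)$ as the solution of the implicit equation that behaves like $G_\tau(z)\sim \langle x,\rho_\tau\rangle/z$ as $|z|\to\infty$. The implicit function theorem gives existence of $F$ for $|z|$ large, since $\tau G$ is small there. Differentiating the implicit relation in $\tau$ and $z$ shows that $F$ solves the same PDE. Uniqueness of the limit path from \cref{thm:burgers_fixed_point} then gives $F=G_\tau$. For that uniqueness I need the PDE's solution to be determined by $G_0$ in the class of $T$-transforms of probability measures. I expect to use the standard argument that a power-series expansion at $z=\infty$ yields a moment recursion. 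Moments stay finite on compact supports, and one can check that $\langle x^k,\rho_\tau\rangle$ grows at most like $C^k$. Analytic continuation then extends the identity to all of $\mathbb{C}\setminus\mathbb{R}_{\ge0}$, which selects the ``physical'' branch.

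\textbf{Step 2 (free multiplicative convolution).} Let $\chi_\tau$ denote the functional inverse of $G_\tau$ near $\infty$, so that $G_\tau(\chi_\tau(u))=u$. Substituting into the characteristic relation gives
\[
\chi_\tau(u)=e^{\tau u}\chi_0(u).
\]
The $S$-transform convention of \cite{potters2020first} is $S(u)=(u+1)/(u\,\chi(u))$. Consequently
\[
S_{\rho_\tau}(u)=e^{-\tau u}S_{\rho_0}(u).
\]
For $\rho_0=\delta_1$ we have $S_{\delta_1}=1$, so $S_{\nu_\tau}(u)=e^{-\tau u}$, the free log-normal $S$-transform. Multiplicativity of $S$ under $\boxtimes$, valid for compactly supported measures on $\mathbb{R}_{\ge0}$ with nonzero mean, gives $S_{\rho_\tau}=S_{\rho_0}S_{\nu_\tau}=S_{\rho_0\boxtimes\nu_\tau}$. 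Since the $S$-transform determines a measure of this type, $\rho_\tau=\rho_0\boxtimes\nu_\tau$.

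The nonzero first moment hypothesis is exactly what makes $G_0$ invertible near $\infty$, because $G_0(z)\sim m_1/z$ as $|z|\to\infty$.

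\textbf{Main obstacle.} The hard step is the rigorous link between the analytic objects and the measures. I must show that $\rho_\tau$ remains compactly supported, or at least has moments controlled enough that its $S$-transform is defined and determines it. I also need uniqueness of the PDE solution within $T$-transforms, so that it can be identified with $\rho_0\boxtimes\nu_\tau$.

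My first approach is to prove the following directly: the function $H_\tau := G_{\rho_0\boxtimes\nu_\tau}$ satisfies \cref{eq:burgers_type_pde} near $\infty$. This follows by reversing the $\chi$-computation, using that $\tau\mapsto\nu_\tau$ is a free multiplicative semigroup. Both $H_\tau$ and $G_\tau$ then have Laurent expansions at $\infty$ solving the same triangular moment recursion with the same initial moments, so they coincide. Agreement of all moments, with $C^k$ growth ensured by compact support of $\rho_0\boxtimes\nu_\tau$, pins down the measure.
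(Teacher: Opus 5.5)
Your formal skeleton is the same as the paper's. Characteristics give $G_\tau(z)=G_0(ze^{-\tau G_\tau(z)})$, the inverse branch scales as $\chi_\tau(w)=e^{\tau w}\chi_0(w)$, and multiplicativity of the $S$-transform identifies $\rho_0\boxtimes\nu_\tau$. The gap is the step you flag as the main obstacle, and neither proposed fix closes it.

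The limiting path from \cref{thm:burgers_fixed_point} is only known to be a continuous path of probability measures satisfying the weak equation for bounded test functions, with a first-moment bound. Nothing says a priori that $\rho_\tau$ is compactly supported or has finite higher moments. The hypotheses allow $\rho_0^{(m)}$ with outlier eigenvalues, and $x^k\notin\mathcal F_b$. So you cannot expand $G_\tau$ in a Laurent series at $z=\infty$, derive a moment recursion for it, or invoke $C^k$ moment growth for it. Those facts hold for $\rho_0\boxtimes\nu_\tau$, not for the unknown limit, so using them for $\rho_\tau$ is circular.

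Likewise, ``uniqueness of the limit path gives $F=G_\tau$'' does not apply. That uniqueness is among measure-valued paths solving the weak equation. Your $F$ is only a local holomorphic solution of the implicit equation near $\infty$, not yet shown to be the $T$-transform of such a path. Comparing two solutions of the complex Burgers equation on $\{|z|>R\}\setminus\mathbb R_{\ge 0}$ would also require controlling complex characteristics, which you do not do.

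The missing idea is to run the characteristics on the negative real axis, as in \cref{lem:meanfield-uniqueness}.
\begin{itemize}
\item For any probability measure on $\mathbb R_{\ge 0}$ and $u<0$, one has $-1\le G_s(u)\le 0$ and $\partial_uG_s(u)\le 0$.
\item Hence the backward characteristic $\dot X_s=X_sG_s(X_s)$, $X_\tau=u$, stays in $(-\infty,0)$, and $G_s(X_s)$ is constant along it.
\item This gives $G_\tau(u)=G_0(ue^{-\tau G_\tau(u)})$ for the actual limiting transform, with no moment or support assumptions.
\item Since $v\mapsto ve^{\tau G_0(v)}$ is a strictly increasing bijection of $(-\infty,0)$, this determines $G_\tau$ on $(-\infty,0)$.
\end{itemize}

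For $\rho_0=\delta_1$, apply the identity theorem to $G_\tau(z)(ze^{-\tau G_\tau(z)}-1)-1$ on $\mathbb C\setminus\mathbb R_{\ge 0}$. This gives the fixed-point equation and also selects the physical branch.

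For compactly supported $\rho_0$ with nonzero mean, put $w=G_0(v)$ with $v\to-\infty$. This gives $G_\tau(e^{\tau w}K_0(w))=w$ for real $w<0$ near $0$, hence $K_\tau(w)=e^{\tau w}K_0(w)$ as meromorphic germs. This real-axis identity is what legitimately produces the inverse germ of $G_\tau$ at $\infty$, which you were instead presupposing through a Laurent expansion. Your $S$-transform comparison then finishes the proof. You still need compact support of $\nu_\tau$, from \cref{cor:fixed_point_support}, to define $S_{\nu_\tau}$ and use multiplicativity.
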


Here ``physical solution'' means the branch corresponding to the $T$-transform of a probability measure, equivalently the branch with $G_\tau(z)\sim 1/z$ as $z\to\infty$.

After translating transform conventions and applying the mean-one dilation, the inverse-transform evolution and free multiplicative convolution description in \cref{cor:fixed-point-free-flow} agree with the subordination formulas of \citet[Propositions~2.4--2.5 and Example~4.12]{zhong2015freeMultiplicativeNormal}. In our setting, these identities are derived from the Burgers flow of the covariance diffusion, thereby connecting the free-probability description to the sequential limit of the deep non-square Gaussian matrix product.

\begin{figure}[t]
\centering
\includegraphics[width=0.8\linewidth]{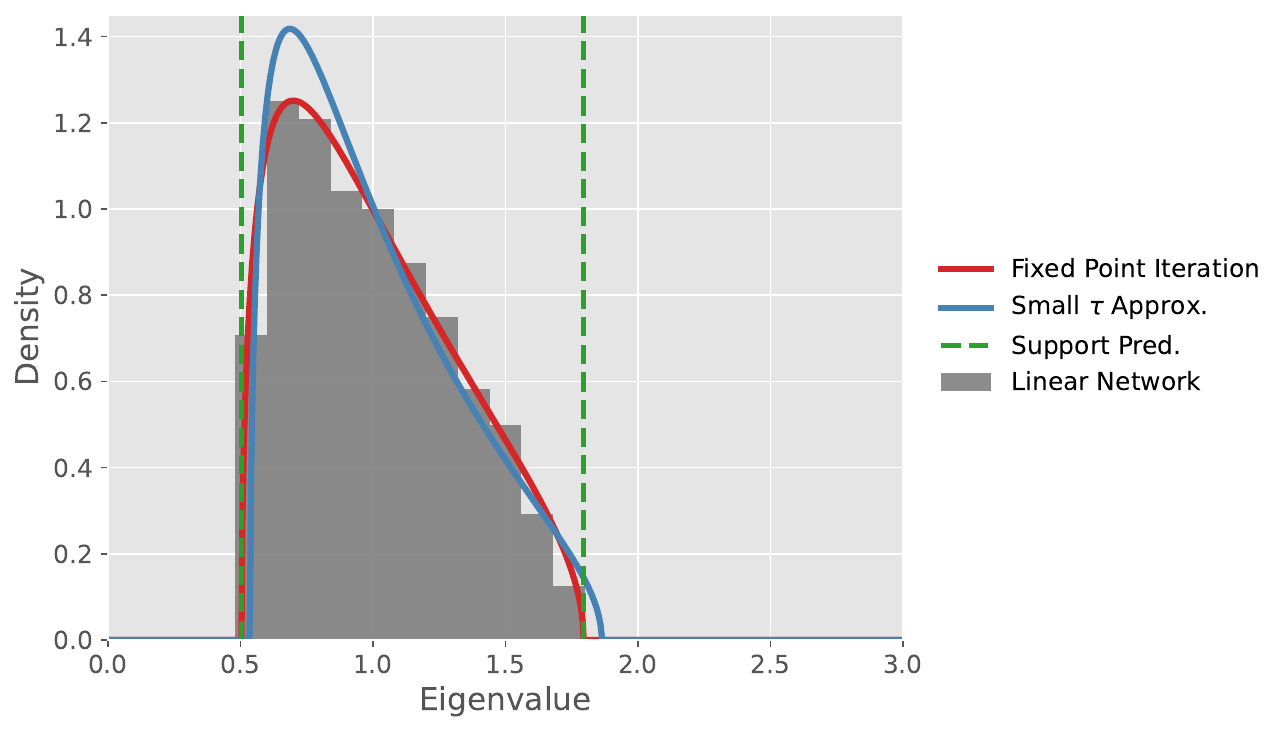}
\caption{Numerical density plot for the free log-normal limiting spectral law in \cref{cor:fixed-point-free-flow} at $\tau = 0.1$, computed using the fixed point iteration described in \cref{subsec:fixed-point-iteration} and compared with the formal approximation at small time in \eqref{eq:small_t_approx}. The approximation is an $x^{-1}$-reweighted Marchenko--Pastur density with parameters $\sigma^2=1+\tau$ and $c=\tau/(1+\tau)$. The simulation uses network width $n=10^4$, depth $d=5$, matrix size $m=200$, and input $n_0=m$, $x=\sqrt m I_m$.}
\label{fig:intro_density}
\end{figure}

The law $\nu_\tau$ is the \emph{free log-normal} \cite{potters2020first}, arising as the limit of a large number of multiplicative free convolutions \cite{bercovici1992levy}; see also \cite{biane1997free,kemp2013large} for related free Brownian motion and large-$N$ $\text{GL}_N$ Brownian motion perspectives.
Equivalently, $\nu_\tau=(\exp)_*(\mu^{\text{sc}}_\tau\boxplus u_{[-\tau,0]})$, where $\mu^{\text{sc}}_\tau$ is the centered semicircle law of variance $\tau$ and $u_{[-\tau,0]}$ is uniform on $[-\tau,0]$; this is the dilation by $e^{-\tau/2}$ of the law in \cite[Theorem~1.1]{auer2026freePositive}; see also \cite{auerVoit2025explicit}.

The support of the free positive multiplicative Brownian law was computed by \citet[Section~4.2]{biane1997segal}; see also \citet[Proposition~4.13]{zhong2015freeMultiplicativeNormal}. After the mean-one dilation $x\mapsto e^{-\tau/2}x$, writing $K_\tau(w)=e^{\tau w}\left(1+\frac{1}{w}\right)$ gives the equivalent endpoint formula
\begin{equation}
    \operatorname{supp}\nu_\tau = [K_\tau(w_-), K_\tau(w_+)] \,, 
    \qquad 
    w_\pm = \frac{-1 \pm \sqrt{1+4/\tau}}{2} \,. 
\end{equation}
Moreover, for the free log-normal law, a formal first-order small-time expansion (\cref{subsec:small_time_expansion}) leads to the approximation
\begin{equation}
\label{eq:small_t_approx}
    \nu_\tau(x)
    \approx \frac{ \sqrt{ (x-{\lambda}_-) ({\lambda}_+ - x) } }{ 2\pi \tau x^2 } \, ,
    \quad
    x \in [{\lambda}_-, {\lambda}_+] \,.
\end{equation}
where the corresponding edge locations are
${\lambda}_\pm = \left(\sqrt{1+\tau} \pm \sqrt{\tau}\right)^2$. 
This density is the $x^{-1}$-reweighted version of the scaled Marchenko--Pastur law in the covariance-matrix convention with parameters $\sigma^2=1+\tau$ and $c=\tau/(1+\tau)$.

The rest of the article is organized as follows. 
In \cref{sec:cov_sde_affine_invar}, we record the covariance SDE in matrix form and its affine-invariance properties. 
In \cref{sec:geometric-dyson-brownian-motions}, we project this diffusion to eigenvalues and derive the geometric Dyson Brownian motion. 
In \cref{sec:dynamical-wigner}, we take the large-$m$ mean-field limit, derive the Burgers equation for the $T$-transform, and analyze the support of the free log-normal law, an approximation at small time, and a toy random feature model.
We conclude in \cref{sec:discussion-conclusion} with open problems and directions for future work.

\subsection{Related Work}
\label{subsec:related_work}

Classical infinite width limits of neural networks, beginning with \citet{neal1995bayesian}, were extended to deep architectures, training dynamics, and generalization \cite{lee2018deep,du2019gradient,allen2019convergence,zou2020gradient,chizat2019lazy,lee2019wide,yang2019scaling,yang2020tensor,arora2019exact,chen2021how,ji2019polylogarithmic,ba2019generalization,bartlett2021deep}.
Quantitative work treats finite width approximation and universality under broader weight distributions \cite{basteri2024quantitative,bordino2024nonasymptotic,trevisan2023wide,apollonio2024normal,favaro2025quantitative,celli2025entropic,balasubramanian2024gaussian,balasubramanian2026finite,celli2026wide,giovagnini2026universality}, while mean field limits provide a complementary feature learning regime \cite{RVE18,cb18,SS18, MeiE7665, featurelearning,yang2022tensor,bordelon2022self}.

Within proportional infinite-depth-and-width limits, \citet{hanin2019products, hanin2019finite} treat the single data point case $m=1$, while \citet{li2022neural} derive the feature covariance SDE for fixed finite $m$. See also \cite{li2021future,seleznova2022neural,lidifferential,noci2024shaped} for related proportional limits and \cite{hayou2021stable,hayou2023commutative,hayou2023width,bordelondepthwise,yangtensorvi} for infinite-depth residual networks.

Joint growth of the number of inputs with depth and width was studied by \citet{hanin2023bayesian} in a Bayesian setting with $dm/n$ held fixed; follow-up work treats nonlinear models and Wishart process approaches \cite{hanin2024bayesian,bassetti2024proportional,hanin2026bayesian}.

At finite $m$, singular value SDEs arise for Wishart processes \citep{bru1989diffusions}, while \citet{graczyk2013multidimensional} treat broader matrix SDEs whose coefficients act spectrally, and related Dyson Brownian motions arise from Riemannian submersions \citep{menon2023siegel}.
At the stochastic top edge, \citet{assiotisMirsajjadi2026gln}, building on \citet{assiotisMirsajjadi2024isde}, study a complex $\beta=2$ counterpart with the same multiplicative interaction kernel. Their limit is a stochastic edge system, and its stochastic Burgers type equation concerns the logarithmic derivative of a characteristic entire function rather than the deterministic global empirical $T$-transform considered here.

At the global level, \citet[Section~16.3]{potters2020first} formally derive the same $T$-transform Burgers equation and free log-normal law from a different multiplicative recursion. For the real geometric matrix flow, \citet[Theorem~6]{maleckiPerez2022universality} prove empirical path convergence when the initial empirical measures converge to a positive point mass, characterizing the limit by a weak equation and moment recursion. Their general compactness theorem \cite[Theorem~1]{maleckiPerez2022universality} assumes a uniform eighth-moment bound and yields full convergence conditional on uniqueness. In the real $\beta=1$, zero-drift specialization and after setting their time parameter to $\tau/2$, their particle SDE and weak equation agree with ours, while their Cauchy-transform evolution \cite[Section~4.4]{maleckiPerez2022universality} specializes to \eqref{eq:burgers_type_pde}. \citet[Theorem~3.1]{auerVoit2025explicit} prove almost-sure weak convergence at each fixed time for deterministic starting configurations whose exponentiated empirical laws converge in every moment to a law satisfying an explicit moment-growth condition, and identify the limit by free multiplicative convolution. Their proof also gives locally uniform convergence of each empirical moment; the free log-normal formula is developed further in \citet{auer2026freePositive}.

The present result is distinct at both theorem and model levels. For the real covariance flow, \cref{thm:burgers_fixed_point} gives convergence in probability of the empirical path on every compact time interval for arbitrary weak initial limits under uniform first-moment control, allows mass at zero, and proves uniqueness of the limiting measure-valued path through negative-real-axis characteristics of its Burgers transform. For compactly supported initial laws with nonzero first moment, \cref{cor:fixed-point-free-flow} identifies the free multiplicative convolution description. Auer and Voit offer complementary almost-sure fixed-time convergence and an explicit convolution limit for possibly unbounded moment-controlled initial laws. In the sequential regime studied here, our theorem starts from the covariance diffusion obtained by \citet{li2022neural} as the proportional depth-width limit of a deep non-square Gaussian product and completes the model-to-law route to the global spectral flow and, in the stated compact-support regime, its free multiplicative convolution form.

Other spectral regimes include fixed-depth covariance kernels with $m,n\to\infty$ \cite{pennington2017nonlinear,louart2018random,peche2019note,fan2020spectra,wang2024nonlinear}, together with related universality results \cite{hanin2019products,hanin2019finite,pandit2024universality}, and products with $d \propto n=n_0=m$ stabilized by taking the $1/d$-th root \cite{liu2018lyapunov,liu2024phase,gorin2022gaussian,ahn2022fluctuations,ahn2022extremal,hanin2021non}.
In the latter product regime the limit is independent of the ratio between $d$ and $n$ \cite{hanin2021non}, whereas the present regime depends on $dm/n$; see also the related physics literature \cite{akemann2012universal,akemann2014universal,akemann2015recent,akemann2019integrable,akemann2014universal2}.

\section{The Covariance SDE and Affine-Invariance}
\label{sec:cov_sde_affine_invar}

We start by recalling the finite-width covariance chain for linear networks. 
The finite-$m$ covariance diffusion itself comes from the proportional depth-width limit of \citet{li2022neural}; here we also record the equivalent matrix form because it exposes the affine-invariant structure used later.
We begin by recalling the notation introduced earlier. 
Let $x \in \mathbb{R}^{n_0 \times m}$ have columns $[x^1,x^2,\cdots,x^m]$, each a separate input data point, and recall the hidden layers as
\begin{equation}
    h^\alpha_{\ell+1} = \frac{1}{\sqrt{n}} W_\ell h^\alpha_\ell
    = \left( \frac{1}{\sqrt n}W_\ell \cdots \frac{1}{\sqrt n}W_1 \right)\frac{1}{\sqrt{n_0}}W_0x^\alpha \,.
\end{equation}
where all the weight entries are i.i.d. $\mathcal{N}(0,1)$, and we will generalize to the non-uniform width case later. 

This allows us to write $h_\ell \in \mathbb{R}^{n\times m}$ as the collection of column vectors $[h^1_\ell, h^2_\ell, \cdots, h^m_\ell]$. 
Consequently, the feature covariance matrices are defined by 
\begin{equation}
    \Phi_\ell = \frac{1}{n} h_\ell^\top h_\ell = \frac{1}{n} \left[ \langle h^\alpha_\ell, h^\beta_\ell \rangle \right]_{\alpha,\beta=1}^m \,, 
    \quad 
    \Phi_0 = \frac{1}{n_0}x^\top x = \frac{1}{n_0}\left[\langle x^\alpha,x^\beta\rangle\right]_{\alpha,\beta=1}^m.
\end{equation}

\smallskip
\noindent
Throughout the sequel, unless explicitly stated otherwise, we work under the standing nondegeneracy assumption $\Phi_0 \in \SPD(m)$. In the neural-network parametrization above, this is equivalent to $x$ having full column rank.
\smallskip

Let $S_t$ denote the symmetric Brownian motion on $\Sym(m)$ normalized by
\begin{equation}
\label{eq:symmetric_brownian}
    d\langle S^{\alpha\beta},S^{\gamma\delta}\rangle_t
    =
    \left(
        \delta_{\alpha\gamma}\delta_{\beta\delta}
        +
        \delta_{\alpha\delta}\delta_{\beta\gamma}
    \right)dt,
    \qquad
    \alpha,\beta,\gamma,\delta\in[m].
\end{equation}
Equivalently, we can construct $S_t = \frac{1}{\sqrt{2}} ( W_t + W_t^\top )$, where $W_t$ has i.i.d. standard Brownian-motion entries.
\smallskip

\begin{theorem}
[Matrix Representation of the Covariance Diffusion]
\label{thm:lin_cov_sde}
Let $(\Phi_t)_{t\in[0,T]}$ denote the fixed-$m$ covariance diffusion obtained for the Gaussian linear network in \citet{li2022neural}, with initial condition $\Phi_0=n_0^{-1}x^\top x$. Then $\Phi_t$ admits the matrix representation\footnote{We would like to thank Tianze Jiang for showing us this matrix form of the covariance SDE.}
\begin{equation}
\label{eq:cov_sde_matrix}
    d\Phi_t
    =
    \Phi_t^{1/2}\,dS_t\,\Phi_t^{1/2} \,,
    \qquad
    \Phi_0 = \frac{1}{n_0}x^\top x \,,
\end{equation}
where $S_t$ is the symmetric Brownian motion introduced above, and $\Phi_t^{1/2}$ denotes the positive definite square root of $\Phi_t$. In upper-triangular coordinates, this representation is equivalent to the covariance SDE in \eqref{eq:cov_sde} obtained in \citet{li2022neural}.
\end{theorem}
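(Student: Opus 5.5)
The plan is to show that the matrix SDE \eqref{eq:cov_sde_matrix} and the entrywise SDE \eqref{eq:cov_sde} describe the same diffusion. Both are driftless, so it suffices to show that they have the same quadratic covariation structure. Once that holds, they share the same generator and hence the same martingale problem. Uniqueness in law of that martingale problem, which follows from the locally Lipschitz coefficients up to the exit time of $\SPD(m)$ as used in \citet{li2022neural}, then identifies the two laws. We read the superscripts $\Phi^{\alpha\beta}$ in $\Sigma$ as matrix entries of $\Phi$.

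\textbf{Step 1: covariation of the matrix SDE.} Write $A=\Phi_t^{1/2}$, which is symmetric, so that $d\Phi^{\alpha\beta}=\sum_{\mu,\nu}A_{\alpha\mu}A_{\nu\beta}\,dS^{\mu\nu}$. Using the normalization \eqref{eq:symmetric_brownian},
\begin{equation*}
d\langle \Phi^{\alpha\beta},\Phi^{\gamma\delta}\rangle_t
=\sum_{\mu,\nu,\kappa,\eta}A_{\alpha\mu}A_{\nu\beta}A_{\gamma\kappa}A_{\eta\delta}
\left(\delta_{\mu\kappa}\delta_{\nu\eta}+\delta_{\mu\eta}\delta_{\nu\kappa}\right)dt .
\end{equation*}
Contracting the indices with $A^2=\Phi$ gives
\begin{equation*}
d\langle \Phi^{\alpha\beta},\Phi^{\gamma\delta}\rangle_t
=\left(\Phi^{\alpha\gamma}\Phi^{\beta\delta}+\Phi^{\alpha\delta}\Phi^{\beta\gamma}\right)dt
=\Sigma(\Phi_t)_{\alpha\beta,\gamma\delta}\,dt .
\end{equation*}
Restricted to the upper-triangular coordinates $\alpha\le\beta$ and $\gamma\le\delta$, this is exactly the diffusion matrix of \eqref{eq:cov_sde}. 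The right-hand side of \eqref{eq:cov_sde_matrix} is also symmetric, because $dS$ is symmetric. So the matrix equation is consistent with its own upper-triangular projection.

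\textbf{Step 2: equivalence of the SDEs.} Let $\Phi$ solve \eqref{eq:cov_sde_matrix}. Its upper-triangular vector is then a continuous local martingale with quadratic variation $\int\Sigma(\Phi_s)\,ds$. By Lévy's characterization, or by the martingale representation theorem on an enlarged space, it solves \eqref{eq:cov_sde} weakly for some Brownian motion $B$.

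The converse direction is similar. Given a solution of \eqref{eq:cov_sde} in $\SPD(m)$, define
\begin{equation*}
dS_t=\Phi_t^{-1/2}\,d\Phi_t\,\Phi_t^{-1/2}.
\end{equation*}
Running the computation of Step 1 in reverse shows that $S$ has the covariation \eqref{eq:symmetric_brownian}. Hence $S$ is a symmetric Brownian motion, and $\Phi$ solves \eqref{eq:cov_sde_matrix}.

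\textbf{Main obstacle.} The delicate part is not the algebra but the well-posedness on $\SPD(m)$. The map $\Phi\mapsto\Phi^{1/2}$ is smooth only on the open cone, so solutions are a priori defined only up to the exit time from $\SPD(m)$.

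I would handle this with a localization argument. Use stopping times $\tau_k$, the first time $\|\Phi\|$ or $\|\Phi^{-1}\|$ exceeds $k$, and prove the equivalence on each $[0,\tau_k]$. To show $\tau_k\to\infty$, apply Itô's formula to $\log\det\Phi_t$. Its drift is constant and its quadratic variation is proportional to $t$, which is consistent with the Gaussian law of the determinant ratio stated in the introduction. Together with a bound on $\Tr\Phi_t$, which is a nonnegative local martingale and therefore a supermartingale, this excludes explosion and degeneration in finite time.

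Uniqueness in law on each localized interval then transfers the identification to the whole interval $[0,T]$.
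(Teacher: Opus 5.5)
Your proof is correct, but it takes a different route from the paper's.

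\textbf{The paper's route.} The paper does not start from the entrywise SDE \eqref{eq:cov_sde}. It goes back to the discrete chain and realizes one layer as $h_{\ell+1}\overset{d}{=}z_\ell\Phi_\ell^{1/2}$. This gives $\Phi_{\ell+1}=\Phi_\ell+n^{-1/2}\Phi_\ell^{1/2}\xi_\ell\Phi_\ell^{1/2}$, where $\xi_\ell$ is the centered Wishart fluctuation. The paper checks that $\xi_\ell$ has the covariance of a unit increment of $S$. It then reads the recursion as an Euler scheme for \eqref{eq:cov_sde_matrix} and invokes the Markov-chain diffusion approximation, after localization on $\SPD(m)$, to obtain the matrix SDE directly as the limit. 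The identification with $\Sigma$ is then the same index contraction as your Step 1.

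\textbf{Your route.} You treat the result of \citet{li2022neural} as a black box and prove the equivalence purely at the SDE level. From a solution of one equation you build the driving noise of the other: $B=\int\Sigma^{-1/2}(\Phi)\,d\Phi$ in one direction (using that $\Sigma$ is invertible on $\SPD(m)$), and $dS=\Phi^{-1/2}\,d\Phi\,\Phi^{-1/2}$ in the other. L\'evy's characterization then identifies each as a Brownian motion.

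\textbf{What each route buys.}
\begin{itemize}
\item Your argument is shorter and gives an equivalence on a common probability space. Uniqueness in law is therefore not even strictly needed for the ``admits the matrix representation'' claim.
\item Your argument makes non-degeneracy and non-explosion on $\SPD(m)$ explicit, via $\log\det\Phi_t$ and the supermartingale $\Tr\Phi_t$. These close together through $\lambda_{\min}\ge\det\Phi/(\Tr\Phi)^{m-1}$. The paper's proof only alludes to this localization and handles positive definiteness separately in \cref{lem:matrix-realization-covariance-sde}.
\item The paper's argument shows where the sandwich form $\Phi^{1/2}\,dS\,\Phi^{1/2}$ comes from in the network. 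It also sets up the conditional Wishart update that is reused for affine invariance in \cref{lm:markov_chain_affine_invar}.
\end{itemize}
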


\begin{proof}
For completeness, we verify the representation directly from the discrete Gaussian covariance chain.
Let $\mathcal F_\ell=\sigma(h_0,\ldots,h_\ell)$ where $h_0 = x$. Conditionally on $\mathcal F_\ell$, the rows of
$h_{\ell+1}$ are i.i.d. Gaussian vectors with covariance $\Phi_\ell$. Hence, if
$z_\ell\in\mathbb R^{n\times m}$ has i.i.d. $\mathcal N(0,1)$ entries and is independent of
$\mathcal F_\ell$, we may realize the transition as
\begin{equation}
    h_{\ell+1}\mid \mathcal F_\ell
    \overset d=
    z_\ell\Phi_\ell^{1/2}.
\end{equation}
Therefore
\begin{equation}
    \Phi_{\ell+1}\mid\mathcal F_\ell
    \overset d=
    \Phi_\ell^{1/2}
    \left(
        \frac{1}{n}z_\ell^\top z_\ell
    \right)
    \Phi_\ell^{1/2}.
\end{equation}
Equivalently, defining the centered Wishart fluctuation
\begin{equation}
    \xi_\ell
    :=
    \frac{1}{\sqrt n}
    \left(
        z_\ell^\top z_\ell-nI_m
    \right),
\end{equation}
we have
\begin{equation}
\label{eq:wishart_euler_update}
    \Phi_{\ell+1} 
    |\mathcal F_\ell
    \overset{d}{=}
    \Phi_\ell
    +
    \frac{1}{\sqrt n}
    \Phi_\ell^{1/2}\xi_\ell\Phi_\ell^{1/2}
\end{equation}
in conditional distribution.

The matrices $\xi_\ell$ are i.i.d., centered, and have the covariance of one unit increment
of the symmetric Brownian motion $S$. Indeed, for every $A,B\in\Sym(m)$,
\begin{equation}
\begin{aligned}
    \mathbb E\left[
        \Tr(A\xi_\ell)
        \Tr(B\xi_\ell)
    \right]
    =
    \mathbb E\left[
        \left(z^\top A z-\Tr A\right)
        \left(z^\top B z-\Tr B\right)
    \right]  
    =
    2\Tr(AB),
\end{aligned}
\end{equation}
where $z\sim\mathcal N(0,I_m)$. Equivalently, in coordinates,
\begin{equation}
    \mathbb E\left[
        \xi_\ell^{\alpha\beta}
        \xi_\ell^{\gamma\delta}
    \right]
    =
    \delta_{\alpha\gamma}\delta_{\beta\delta}
    +
    \delta_{\alpha\delta}\delta_{\beta\gamma}.
\end{equation}
Since $m$ is fixed, the entries of $\xi_\ell$ have uniformly bounded moments of all orders.
Thus the martingale array $S_t^{(n)}
    :=
    \sum_{\ell< nt}
    \frac{1}{\sqrt n}\xi_\ell$
satisfies
\begin{equation}
    (S_t^{(n)})_{t\in[0,T]}
    \xrightarrow{d}
    (S_t)_{t\in[0,T]},
\end{equation}
where the limit is the symmetric Brownian motion introduced above in \cref{eq:symmetric_brownian}.
Thus, the recursion \eqref{eq:wishart_euler_update} is precisely the Euler scheme
\begin{equation}
    \Phi_{\ell+1}
    =
    \Phi_{\ell}
    +
    \Phi_{\ell}^{1/2}
    \left(
	        S_{(\ell+1)/n}^{(n)} - S_{\ell/n}^{(n)}
    \right)
    \Phi_{\ell}^{1/2} \,.
\end{equation}
By the standard Markov-chain diffusion approximation (see e.g. \cite[Appendix A]{li2022neural}), equivalently the weak convergence of Euler schemes after localization on $\SPD(m)$, 
	the continuous-time interpolation satisfies
    \begin{equation}
        ( \Phi_{\lfloor nt\rfloor} )_{t\in[0,T]}
        \xrightarrow{d}
        (\Phi_t)_{t\in[0,T]},
    \end{equation}
    where the limit solves
\begin{equation}
	    d\Phi_t
	    =
	    \Phi_t^{1/2}\,dS_t\,\Phi_t^{1/2} \,,
    \qquad
    \Phi_0=\frac{1}{n_0}x^\top x \,.
\end{equation}

It remains only to identify the upper-triangular coordinate covariance. From the cross variation of
	$S_t$, we have
\begin{equation}
\begin{aligned}
	    d\langle \Phi^{\alpha\beta},\Phi^{\gamma\delta}\rangle_t
	    &=
	    \sum_{p,q,r,s}
	    (\Phi_t^{1/2})^{\alpha p}
	    (\Phi_t^{1/2})^{q\beta}
	    (\Phi_t^{1/2})^{\gamma r}
	    (\Phi_t^{1/2})^{s\delta}
	    d\langle S^{pq},S^{rs}\rangle_t  \\
	    &=
	    \left(
	        \Phi_t^{\alpha\gamma}\Phi_t^{\beta\delta}
	        +
	        \Phi_t^{\alpha\delta}\Phi_t^{\beta\gamma}
	    \right)dt.
\end{aligned}
\end{equation}
Thus, when the unique upper-triangular entries are used as coordinates, the limiting SDE has
desired diffusion matrix 
$\Sigma(\Phi)_{\alpha\leq\beta,\gamma\leq\delta}
    =
    \Phi^{\alpha\gamma}\Phi^{\beta\delta}
    +
    \Phi^{\alpha\delta}\Phi^{\beta\gamma}$. 
\end{proof}

\begin{remark}
\label{rm:non-unif-width}
The same proof extends to non-uniform widths by viewing the update for each layer $\ell$ 
\begin{equation}
    \Phi_{\ell+1} | \mathcal{F}_\ell
    \overset{d}{=}
    \Phi_\ell
    +
    \frac{1}{\sqrt{n_{\ell+1}}}
    \Phi_\ell^{1/2}\xi_\ell\Phi_\ell^{1/2} 
\end{equation}
as a discretization of the limiting SDE for the transition $\Phi_\ell \to \Phi_{\ell+1}$ with step size $\frac{1}{n_{\ell+1}}$. 
Therefore, if all step sizes tend to zero and their sum converges to the same layer time, the same limit follows. 
More precisely, we would take $d, n_1, n_2, \cdots, n_d \to \infty$ with $\max_{1\leq \ell\leq d}\frac{1}{n_\ell}\to 0$ and
$\sum_{\ell=1}^d \frac{1}{n_\ell} \to T$, which was also the approach taken in \cite{hanin2019products,hanin2019finite}.
\end{remark}

\subsection{Affine-Invariance and Geometric Structure}

Whenever an SDE with a non-identity diffusion coefficient $\Sigma$ arises, it is natural to speculate whether or not $\Sigma^{-1}$ corresponds to a Riemannian metric. 
In this case, we provide an affirmative answer, where $\Sigma^{-1}$ is the affine-invariant metric for the manifold of symmetric positive definite matrices $M = \SPD(m)$. 
We start by establishing a key symmetry structure. 
\begin{lemma}[Affine-Invariance]
\label{lm:markov_chain_affine_invar}
Assume $n\ge m$, and let $P=P_n:\SPD(m)\to\SPD(m)$ be the random Markov chain update taking $\Phi_\ell$ to $\Phi_{\ell+1}$.
Then, for every $\Phi \in \SPD(m)$ and every $A \in \GL(m,\mathbb{R})$,
\begin{equation}
    A P(\Phi) A^\top \overset{d}{=} P( A \Phi A^\top ) \,.
\end{equation}
Consequently, the Markov semigroup, equivalently the transition kernel, of the covariance SDE in \cref{thm:lin_cov_sde} is affine-invariant.
\end{lemma}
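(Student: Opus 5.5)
The plan is to work with the explicit one-step transition from the proof of \cref{thm:lin_cov_sde}, namely $P(\Phi)\overset{d}{=}\Phi^{1/2}W\Phi^{1/2}$ with $W=\frac1n z^\top z$ and $z\in\mathbb{R}^{n\times m}$ having i.i.d.\ $\mathcal N(0,1)$ entries. The first step is to show that the symmetric square root plays no special role. Let $L\in\GL(m,\mathbb{R})$ be any matrix with $LL^\top=\Phi$. The rows of $zL^\top$ are i.i.d.\ $\mathcal N(0,LL^\top)=\mathcal N(0,\Phi)$. Hence $zL^\top\overset{d}{=}z\Phi^{1/2}$ as random $n\times m$ matrices. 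Applying the map $h\mapsto \frac1n h^\top h$ to both sides gives
\begin{equation}
    L\,W\,L^\top \overset{d}{=} \Phi^{1/2}W\Phi^{1/2}\overset{d}{=} P(\Phi)
\end{equation}
for every such factor $L$.

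Now fix $A\in\GL(m,\mathbb{R})$ and set $L=A\Phi^{1/2}$. Then $LL^\top=A\Phi A^\top$, so the previous step applied at the point $A\Phi A^\top$ yields
\begin{equation}
    P(A\Phi A^\top)\overset{d}{=}A\Phi^{1/2}W\Phi^{1/2}A^\top\overset{d}{=}A\,P(\Phi)\,A^\top ,
\end{equation}
which is the claimed identity. The hypothesis $n\ge m$ is used only to guarantee that $W$ is almost surely positive definite, so that $P$ really maps $\SPD(m)$ into $\SPD(m)$. Iterating with fresh independent $z_\ell$ and using the Markov property shows that the whole chain started at $A\Phi_0A^\top$ has the law of $A$ times the chain started at $\Phi_0$.

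For the semigroup statement, one option is to pass this equality of chain laws through the diffusion limit of \cref{thm:lin_cov_sde}, since $\Phi\mapsto A\Phi A^\top$ is continuous. The alternative I would present is a direct argument on the SDE. Let $\Psi_t=A\Phi_tA^\top$. Then
\begin{equation}
    d\Psi_t=A\Phi_t^{1/2}\,dS_t\,\Phi_t^{1/2}A^\top .
\end{equation}
Take the polar decomposition $A\Phi_t^{1/2}=\Psi_t^{1/2}O_t$. Here $O_t=\Psi_t^{-1/2}A\Phi_t^{1/2}$ is orthogonal, and it is a continuous adapted function of $\Phi_t$. This gives $d\Psi_t=\Psi_t^{1/2}\,d\tilde S_t\,\Psi_t^{1/2}$ with
\begin{equation}
    \tilde S_t=\int_0^t O_s\,dS_s\,O_s^\top .
\end{equation}
Since $O_s$ is orthogonal, the quadratic covariation of $\tilde S$ equals that in \eqref{eq:symmetric_brownian}; this uses $\Tr(O^\top AO\,O^\top BO)=\Tr(AB)$. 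By Lévy's characterization, $\tilde S$ is therefore again a symmetric Brownian motion. Weak uniqueness then gives $\Psi\overset{d}{=}\Phi^{(A\Phi_0A^\top)}$, which is affine invariance of the transition kernel.

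I expect the main obstacle to be this weak uniqueness on the open cone $\SPD(m)$. The map $\Phi\mapsto\Phi^{1/2}$ is smooth, hence locally Lipschitz, on $\SPD(m)$, so pathwise uniqueness holds up to the exit time from compact subsets. One must also rule out reaching the boundary or infinity before time $T$. My first approach is the determinant identity stated in the introduction: $\log\det\Phi_t$ is a Brownian motion with drift, so it stays finite and $\Phi_t$ remains nondegenerate. For $\Tr\Phi_t$, a moment bound shows it does not explode. If this becomes delicate, I would instead rely on the discrete-chain route, where no uniqueness issue arises.
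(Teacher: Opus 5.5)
Your proposal is correct, and its one-step identity is the paper's. The paper writes $P(\Phi)$ as the sample covariance $\frac1n\sum_i g_ig_i^\top$ with $g_i\sim\mathcal N(0,\Phi)$ and pushes the $g_i$ through $A$. That is the same observation as yours, that $zL^\top$ has i.i.d.\ $\mathcal N(0,LL^\top)$ rows for every factor $L$.

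The difference is in the semigroup statement. The paper only iterates and passes to the limit in \cref{thm:lin_cov_sde}, which is your first option. That route is short, but it rests on the chain-to-diffusion convergence. It also relies tacitly on uniqueness in law of the limit to identify both sides. Your direct argument uses the polar factor $O_t=\Psi_t^{-1/2}A\Phi_t^{1/2}$, the rotated noise $\tilde S_t=\int_0^t O_s\,dS_s\,O_s^\top$, L\'evy's characterization, and weak uniqueness. It is intrinsic: it applies to any solution of \eqref{eq:cov_sde_matrix}, independently of its random-product origin, and it makes the well-posedness input explicit. This is also the form in which orthogonal invariance is used in the proof of \cref{thm:geom_dyson}.

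One caution for your non-explosion step: do not cite the determinant law from the introduction. \cref{cor:det_dist} is proved via \cref{thm:geom_dyson}, which itself invokes this lemma, so the citation would be circular. Instead, apply It\^o's formula directly to the matrix SDE, up to the exit time from compact subsets of $\SPD(m)$:
\begin{equation*}
    d\log\det\Phi_t=\Tr(dS_t)-\frac{m(m+1)}{2}\,dt,
    \qquad
    d\Tr\Phi_t=\Tr(\Phi_t\,dS_t).
\end{equation*}
The first is a Brownian motion with variance rate $2m$ plus a linear drift. The second shows that $\Tr\Phi_t$ is a nonnegative local martingale, hence a supermartingale. Combined with $\lambda_{\min}(\Phi)\ge\det\Phi/(\Tr\Phi)^{m-1}$, these show that the solution a.s.\ neither explodes nor reaches $\partial\SPD(m)$ in finite time. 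The locally Lipschitz coefficients then give pathwise uniqueness, and Yamada--Watanabe gives the weak uniqueness you need.
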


\begin{proof}

By the conditional Wishart representation of one layer, the Markov update from $\Phi \in \SPD(m)$ can be realized as
\begin{equation}
    P(\Phi)
    \overset{d}{=}
    \frac{1}{n}\sum_{i=1}^n g_i g_i^\top,
    \qquad
    g_i \overset{\mathrm{i.i.d.}}{\sim} \mathcal{N}(0,\Phi).
\end{equation}
Because $n\ge m$, the sample covariance is positive definite almost surely, so $P_n$ indeed maps $\SPD(m)$ into $\SPD(m)$.

Now fix $A \in \GL(m,\mathbb{R})$. Since $g_i \sim \mathcal{N}(0,\Phi)$, we have $Ag_i \sim \mathcal{N}(0,A\Phi A^\top)$. Therefore,
\begin{equation}
\begin{aligned}
    P(A\Phi A^\top)
    \overset{d}{=}
    \frac{1}{n}\sum_{i=1}^n (Ag_i)(Ag_i)^\top 
    =
    A\left(\frac{1}{n}\sum_{i=1}^n g_i g_i^\top\right)A^\top 
    \overset{d}{=}
    A P(\Phi) A^\top .
\end{aligned}
\end{equation}
This proves the one-step affine-invariance identity. 
Iterating the one-step identity gives affine invariance of the finite-width Markov chain transition kernels. 
Passing to the diffusion limit in \cref{thm:lin_cov_sde} gives affine invariance of the limiting covariance SDE semigroup.

\end{proof}

\begin{remark}
\label{rm:init_identity}
Let $\Phi_t^{(V)}$ denote the solution of the covariance SDE in \cref{thm:lin_cov_sde} started from $V \in \SPD(m)$.
If $V = QQ^\top$, then \cref{lm:markov_chain_affine_invar} implies 
\begin{equation}
    \Phi_t^{(V)} \overset{d}{=} Q \Phi_t^{(I_m)} Q^\top \, ,
\end{equation}
where $\Phi_t^{(I_m)}$ solves the same covariance SDE with initial condition $I_m$.
Thus these identities may be reduced to the case $\Phi_0=I_m$.
\end{remark}

This reduction will be used later at the infinitesimal level: after rotating to the instantaneous eigenbasis, the diagonal noise gives the multiplicative Brownian terms, while the off-diagonal noise gives the repulsion term through the second variation of eigenvalues.

Let $g_\Phi(A,B)$ define the Riemannian inner product at $\Phi \in \SPD(m)$ with respect to the affine-invariant metric, i.e. for all $A,B \in \Sym(m) = T_\Phi \SPD(m) = \Sym(m)$, we have 
\begin{equation}
    g_\Phi(A,B) = \frac{1}{2} \Tr( \Phi^{-1} A \Phi^{-1} B ) \,. 
\end{equation}
Next, we identify $\Sigma^{-1}$ with the affine-invariant metric $g_\Phi$.

\begin{proposition}
[$\Sigma^{-1}$ is the Affine-Invariant Metric]
\label{prop:sigma_affine_invar}
For all $\Phi \in \SPD(m)$ and $A,B \in \Sym(m)$, we have
\begin{equation}
    \vecop(A)^\top \Sigma(\Phi)^{-1} \vecop(B)
    = \frac{1}{2}\Tr(A\Phi^{-1}B\Phi^{-1})
    = g_\Phi(A,B),
\end{equation}
where $\vecop:\Sym(m)\to\mathbb{R}^{m(m+1)/2}$ flattens the upper triangular entries of a symmetric matrix into a vector.
\end{proposition}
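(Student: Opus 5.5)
The plan is to reduce to $\Phi=I_m$ through the affine covariance of $\Sigma$, which mirrors \cref{lm:markov_chain_affine_invar} at the level of diffusion matrices. At $\Phi=I_m$ the diffusion matrix is diagonal in upper-triangular coordinates, so it can be inverted by hand. Throughout, for $A\in\GL(m,\mathbb{R})$ let $T_A$ denote the $\tfrac{m(m+1)}{2}\times\tfrac{m(m+1)}{2}$ matrix representing the linear map $X\mapsto AXA^\top$ on $\Sym(m)$ in the coordinates $\vecop$, so that $\vecop(AXA^\top)=T_A\vecop(X)$. Note that $T_A$ is invertible with $T_A^{-1}=T_{A^{-1}}$.

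\textbf{Step 1 (transformation law).} Fix $Q\in\GL(m,\mathbb{R})$ and let $S$ be a symmetric Gaussian matrix with the unit-time covariance \eqref{eq:symmetric_brownian}. The computation at the end of the proof of \cref{thm:lin_cov_sde}, with $\Phi_t^{1/2}$ replaced by a general $Q$, gives
\begin{equation}
    \operatorname{Cov}\bigl((QSQ^\top)^{\alpha\beta},(QSQ^\top)^{\gamma\delta}\bigr)=(QQ^\top)^{\alpha\gamma}(QQ^\top)^{\beta\delta}+(QQ^\top)^{\alpha\delta}(QQ^\top)^{\beta\gamma}.
\end{equation}
So $\vecop(QSQ^\top)$ has covariance $\Sigma(QQ^\top)$, and this depends on $Q$ only through $QQ^\top$. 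Taking $Q=\Phi^{1/2}$ gives $\Sigma(\Phi)=\operatorname{Cov}(T_{\Phi^{1/2}}\vecop(S))=T_{\Phi^{1/2}}\,\Sigma(I_m)\,T_{\Phi^{1/2}}^\top$.

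\textbf{Step 2 (the identity case).} For $\alpha\le\beta$ and $\gamma\le\delta$, the entry $\delta_{\alpha\gamma}\delta_{\beta\delta}+\delta_{\alpha\delta}\delta_{\beta\gamma}$ vanishes unless $(\alpha,\beta)=(\gamma,\delta)$. When the indices agree it equals $2$ if $\alpha=\beta$ and $1$ if $\alpha<\beta$. Hence $\Sigma(I_m)$ is diagonal and invertible, with inverse entries $\tfrac12$ on the diagonal-index coordinates and $1$ on the strictly-upper coordinates. Therefore
\begin{equation}
    \vecop(A)^\top\Sigma(I_m)^{-1}\vecop(B)=\tfrac12\sum_\alpha A^{\alpha\alpha}B^{\alpha\alpha}+\sum_{\alpha<\beta}A^{\alpha\beta}B^{\alpha\beta}=\tfrac12\Tr(AB),
\end{equation}
where the last equality uses the symmetry of $A$ and $B$.

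\textbf{Step 3 (conclusion).} By Step 1, $\Sigma(\Phi)$ is invertible with
\begin{equation}
    \Sigma(\Phi)^{-1}=T_{\Phi^{-1/2}}^\top\,\Sigma(I_m)^{-1}\,T_{\Phi^{-1/2}}.
\end{equation}
Substituting this and applying Step 2 to $\Phi^{-1/2}A\Phi^{-1/2}$ and $\Phi^{-1/2}B\Phi^{-1/2}$ gives
\begin{equation}
    \vecop(A)^\top\Sigma(\Phi)^{-1}\vecop(B)=\tfrac12\Tr\bigl(\Phi^{-1/2}A\Phi^{-1}B\Phi^{-1/2}\bigr)=\tfrac12\Tr(A\Phi^{-1}B\Phi^{-1}).
\end{equation}
The final equality is cyclicity of the trace, and the right-hand side is $g_\Phi(A,B)$ by definition.

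The only delicate point is bookkeeping with the upper-triangular coordinates. The Euclidean pairing $\vecop(A)^\top\vecop(B)$ is not the trace pairing, because off-diagonal entries are counted once rather than twice. This mismatch is absorbed exactly by the factor $2$ on the diagonal of $\Sigma(I_m)$, and it is why I would use the transformation law of Step 1 instead of a direct coordinate inversion for general $\Phi$.
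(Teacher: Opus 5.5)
Your proof is correct, and it follows a different route from the paper's. The paper's argument is coordinate-free. It computes the quadratic covariation of the linear functionals $F_A(\Phi)=\Tr(A\Phi)$ along the SDE, giving $2\Tr(A\Phi B\Phi)\,dt$. It checks that the cometric $g_\Phi^{-1}$ takes the same value on $dF_A,dF_B$, using that $dF_A$ is represented by the tangent vector $2\Phi A\Phi$. It then invokes the standard duality: if the cometric has coordinate matrix $\Sigma(\Phi)$, the metric has coordinate matrix $\Sigma(\Phi)^{-1}$. You instead treat $\Sigma$ as an explicit matrix, use the congruence $\Sigma(\Phi)=T_{\Phi^{1/2}}\Sigma(I_m)T_{\Phi^{1/2}}^\top$ to reduce to the identity, and invert the diagonal matrix $\Sigma(I_m)$ by hand. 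The paper's route never has to confront the upper-triangular bookkeeping, since the duality absorbs the fact that $\partial F_A/\partial\Phi^{\alpha\beta}=2A^{\alpha\beta}$ off the diagonal. Its identification of the covariance tensor with the cometric is also exactly what is reused in the information-geometric appendix to write the generator as $\tfrac12 g^{ij}\partial_i\partial_j$. The price is that invertibility of $\Sigma(\Phi)$ and the cometric-to-metric inversion are left to standard facts. Your route is more elementary and self-contained. It proves invertibility of $\Sigma(\Phi)$ outright and shows precisely where the factor $\tfrac12$ comes from, namely the entries $2$ on the diagonal-index coordinates of $\Sigma(I_m)$. It needs only a static Gaussian (indeed purely bilinear) computation rather than the SDE. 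And your Step 1 with $Q=A\Phi^{1/2}$ immediately gives $\Sigma(A\Phi A^\top)=T_A\Sigma(\Phi)T_A^\top$, which is the affine invariance of $g$ at the level of diffusion matrices.
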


\begin{proof}
For $A\in\Sym(m)$, let $F_A(\Phi):=\Tr(A\Phi)$. Since $dF_A[V]=\Tr(AV)$, the covectors $dF_A$ span the cotangent space as $A$ ranges over $\Sym(m)$.

Using the matrix covariance SDE from \cref{thm:lin_cov_sde} and cyclicity of the trace, the differential of $F_A$ is
\begin{equation}
    dF_A(\Phi_t)
    = \Tr(A\,d\Phi_t)
    = \Tr\!\left(\Phi_t^{1/2}A\Phi_t^{1/2}\,dS_t\right).
\end{equation}
By the normalization of the symmetric Brownian motion $S_t$, this gives
\begin{equation}
\begin{aligned}
    d\langle F_A(\Phi_t),F_B(\Phi_t)\rangle
    &= 2\Tr\!\left(\Phi_t^{1/2}A\Phi_t B\Phi_t^{1/2}\right)dt \\
    &= 2\Tr(A\Phi_t B\Phi_t)dt.
\end{aligned}
\end{equation}
Thus the covariance tensor of the diffusion acts on the covectors $dF_A,dF_B$ by $2\Tr(A\Phi B\Phi)$.

We now compute the cometric induced by $g_\Phi$. The covector $dF_A$ is represented by the tangent vector $2\Phi A\Phi$, because for every $V\in\Sym(m)$,
\begin{equation}
\begin{aligned}
    g_\Phi(2\Phi A\Phi,V)
    &= \frac12\Tr\!\left(\Phi^{-1}(2\Phi A\Phi)\Phi^{-1}V\right) \\
    &= \Tr(AV)
     = dF_A[V].
\end{aligned}
\end{equation}
Therefore
\begin{equation}
\begin{aligned}
    g_\Phi^{-1}(dF_A,dF_B)
    &= g_\Phi(2\Phi A\Phi,2\Phi B\Phi) \\
    &= 2\Tr(A\Phi B\Phi).
\end{aligned}
\end{equation}
The covariance tensor of the SDE therefore agrees with the affine-invariant cometric $g_\Phi^{-1}$ on the spanning family of covectors $dF_A$, and hence on the full cotangent space.

By \cref{thm:lin_cov_sde}, $\Sigma(\Phi)$ is the coordinate matrix of the covariance tensor in the upper-triangular coordinates $\{\Phi^{\alpha\beta}\}_{\alpha\leq\beta}$. Since we have identified this covariance tensor with the cometric $g_\Phi^{-1}$, the inverse matrix $\Sigma(\Phi)^{-1}$ is the coordinate matrix of $g_\Phi$ in the same coordinates. Therefore, for tangent vectors $A,B\in\Sym(m)$,
\begin{equation}
    \vecop(A)^\top \Sigma(\Phi)^{-1}\vecop(B)
    =
    g_\Phi(A,B)
    =
    \frac12\Tr(A\Phi^{-1}B\Phi^{-1}).
\end{equation}
This proves the claim.

\end{proof}

\begin{remark}[Dual Brownian Interpretation]
\label{rm:dual_brownian}
The covariance diffusion is not the Levi--Civita Brownian motion associated with the affine-invariant metric in \cref{prop:sigma_affine_invar}.
However, the covariance tensor identified in \cref{prop:sigma_affine_invar} is the affine-invariant cometric.
As a result, the It\^o generator can be identified as (one half of) the dual-connection Laplacian for the information geometry of centered Gaussian measures, and $\Phi_t$ may be viewed as Brownian motion associated with the dual connection.
The calculation is given in \cref{sec:appendix-dual-brownian}.
\end{remark}

\section{Geometric Dyson Brownian Motions}
\label{sec:geometric-dyson-brownian-motions}

In this section, we project the affine-invariant covariance diffusion to its eigenvalues and prove \cref{thm:geom_dyson}. The calculation is the geometric analogue of the standard derivation of Dyson Brownian motion, with the affine-invariant noise replacing additive matrix Brownian noise.

For a smooth curve $(\Phi_t)_{t \geq 0}$ in $\SPD(m)$ with simple spectrum, the first and second Hadamard variation formulae \cite[eq. 1.73 and 1.74]{tao2012topics} give the following identities for the eigenvalues $\lambda_i$ of $\Phi_t$:
\begin{equation}
    \dot \lambda_i = u_i^\top \dot \Phi_t u_i \,, \quad
    \ddot \lambda_i = u_i^\top \ddot \Phi_t u_i
    + 2 \sum_{j \neq i} \frac{ |u_j^\top \dot \Phi_t u_i |^2 }{ \lambda_i - \lambda_j } \,,
\end{equation}
where $\{u_i\}_{i \in [m]}$ are the orthonormal eigenvectors of $\Phi_t$.

Therefore if we choose $\Phi_t = \Phi + t \Psi$, this gives us the directional derivatives of
\begin{equation}
\label{eq:eigen_deriv}
    \langle \nabla \lambda_i( \Phi ), \Psi \rangle = u_i^\top \Psi u_i \,, \quad 
    \nabla^2 \lambda_i( \Phi ) [ \Psi, \Psi ] = 2 \sum_{j \neq i} \frac{ |u_j^\top \Psi u_i |^2 }{ \lambda_i - \lambda_j } \,. 
\end{equation}

We now prove \cref{thm:geom_dyson}.

\geomdbm*

\begin{proof}
By \cref{lem:instantaneous-simplicity-no-collision}, applied with $\varepsilon=1/k$ and then intersected over $k\geq 1$, the spectrum is simple for every positive time almost surely. It is therefore enough to compute the dynamics locally on the open set of simple-spectrum matrices.

At a time with simple spectrum, choose the orthonormal eigenbasis of $\Phi$. By \cref{lm:markov_chain_affine_invar}, and in particular by orthogonal invariance, the local eigenvalue generator may be computed after this orthogonal change of basis. Since orthogonal conjugation does not change eigenvalues, we compute at $\Phi=\diag(\lambda_1,\ldots,\lambda_m)$.

In this basis, \cref{thm:lin_cov_sde} gives $d\Phi_{ij}=\sqrt{\lambda_i\lambda_j}\,dS_{ij}$. Hence the quadratic variations needed below are
\begin{equation}
\begin{aligned}
    d\langle \Phi_{ii},\Phi_{jj}\rangle_t
    &= 2\lambda_i^2\delta_{ij}\,dt, \\
    d\langle \Phi_{ij},\Phi_{ij}\rangle_t
    &= \lambda_i\lambda_j\,dt,
    \qquad i\neq j .
\end{aligned}
\end{equation}
At the same diagonal matrix, \cref{eq:eigen_deriv} reduces to
\begin{equation}
    \langle \nabla\lambda_i(\Phi),\Psi\rangle=\Psi_{ii},
    \qquad
    \nabla^2\lambda_i(\Phi)[\Psi,\Psi]
    =2\sum_{j\neq i}\frac{\Psi_{ij}^2}{\lambda_i-\lambda_j} .
\end{equation}
Applying It\^o's formula, using that the matrix entries have no It\^o drift, gives
\begin{equation}
    d\lambda_{i,t}
    =
    d\Phi_{ii}
    +
    \sum_{j\neq i}
    \frac{d\langle \Phi_{ij},\Phi_{ij}\rangle_t}{\lambda_i-\lambda_j} .
\end{equation}
Since $d\Phi_{ii}=\lambda_i\,dS_{ii}$ and $d\langle S_{ii},S_{ii}\rangle_t=2\,dt$, we may write
\begin{equation}
    d\Phi_{ii}=\sqrt{2}\lambda_i\,dB_{i,t} .
\end{equation}
The relation $d\langle \Phi_{ii},\Phi_{jj}\rangle_t=0$ for $i\neq j$ shows that $B_1,\ldots,B_m$ are independent. Substituting the off-diagonal quadratic variations gives
\begin{equation}
    d\lambda_{i,t}
    =
    \sqrt{2}\lambda_{i,t}\,dB_{i,t}
    +
    \sum_{j\neq i}
    \frac{\lambda_{i,t}\lambda_{j,t}}
         {\lambda_{i,t}-\lambda_{j,t}}\,dt,
    \qquad i=1,\ldots,m .
\end{equation}
This proves the stated SDE locally on $(0,\infty)$.
\end{proof}

\subsection{Log Transform and Related Calculations}
\label{subsec:log_dyson_calc}

Because the diffusion coefficient is $\sqrt{2}\lambda_{i,t}$, it is natural to set $\chi_{i,t}:=\log\lambda_{i,t}$. Applying It\^{o}'s lemma gives
\begin{equation}
\label{eq:log_geom_dyson}
    d\chi_{i,t}
    =
    \sqrt{2}\,dB_{i,t}
    + \sum_{j\neq i}
        \frac{1}{e^{\chi_{i,t}-\chi_{j,t}}-1}\,dt
    - dt \,.
\end{equation}

Let $r_{i,t}:=\chi_{i,t}/2$ be the log singular values. Using $(e^{2x}-1)^{-1}=\tfrac12(\coth x-1)$ in \cref{eq:log_geom_dyson}, we obtain
\begin{equation}
\label{eq:log-singular-value-gdbm}
    dr_{i,t}
    =
    \frac{1}{\sqrt{2}}\,dB_{i,t}
    + \frac{1}{4}\sum_{j\neq i}
        \coth(r_{i,t}-r_{j,t})\,dt
    - \frac{m+1}{4}\,dt \,.
\end{equation}
This is the case $a=1/2$, $b=c=0$, and $\mu=(m+1)/4$ of the geometric Dyson Brownian motions studied by \citet{ipsen2017mayWigner} in terms of finite time Lyapunov exponents.

The log singular value equation also makes precise a subtle connection with the radial ellipsoid diffusion of \citet{norris1986brownian}. Let $G_s$ be their right invariant Brownian motion on $\mathrm{GL}(m,\mathbb{R})$, with $G_0=I_m$, and set $\mathcal{G}_s:=G_s\Phi_0^{1/2}$ and $Y_s:=\mathcal{G}_s^{\mathsf T}\mathcal{G}_s$. If $\Lambda_i(Y_s)$ denotes the $i$th ordered eigenvalue of $Y_s$ and $\gamma_i(s):=\tfrac12\log\Lambda_i(Y_s)$, then the covariance diffusion may be coupled with $Y_s$ so that
\begin{equation}
\label{eq:norris-covariance-normalization}
    \Phi_t = e^{-(m+1)t/2}Y_{t/2} \qquad r_{i,t} = \gamma_i(t/2)-\frac{m+1}{4}t.
\end{equation}
Although compactly expressed by \cref{eq:norris-covariance-normalization}, this identification is far from apparent. Establishing it requires recognizing the right invariant matrix Brownian motion behind the covariance realization, passing from squared singular values to radial log coordinates, and matching the matrix orientation, stochastic calculus, time, and drift conventions. It therefore gives an exact identification of the two finite dimensional processes once each has been constructed; it does not derive the covariance diffusion, its random product origin, or the large-$m$ analysis from \citet{norris1986brownian}.

When $|\chi_{i,t}-\chi_{j,t}|$ is small, the denominator satisfies $e^{\chi_{i,t}-\chi_{j,t}}-1\approx \chi_{i,t}-\chi_{j,t}$, so the log eigenvalues have a local repulsion similar to that of Dyson Brownian motion. 

The $\beta=2$ Dyson Brownian motion can be interpreted as a Doob $h$-transform, with the ordinary Vandermonde as a positive harmonic function on the Weyl chamber \cite{bloemendal2010doob}. It is therefore natural to compare the geometric variant with this construction.
We start by identifying an analogue of the Vandermonde determinant and its drift-corrected version:
\begin{equation}
\label{eq:drift-corrected-vandermonde}
    \Delta_m(\chi) = \prod_{i < j} ( e^{-\chi_i} - e^{-\chi_j} )
    \qquad
    h(\chi) = \Delta_m(\chi) \exp\left( -\sum_{i=1}^m \chi_i \right) \,,
\end{equation}
Since the log eigenvalues $\chi_i$ are increasing, $\Delta_m>0$ whenever the spectrum is simple. 

This form is useful because the interaction part of the drift can be written as
\begin{equation}
\label{eq:modified-vandermonde-log-derivative}
    \di_i \log \Delta_m(\chi) = \sum_{j\neq i} \frac{1}{ e^{\chi_i - \chi_j} - 1 } \,, 
\end{equation}
Since $\di_i\log h=\di_i\log\Delta_m-1$, the full drift in \cref{eq:log_geom_dyson} is $\nabla\log h$. The identity in \cref{lm:vandermonde_almost_harmonic} and the analogous calculation for $h$ give
\begin{equation}
\label{eq:drift-corrected-vandermonde-eigenvalues}
    \Delta \Delta_m(\chi) = \frac{m(m-1)(2m-1)}{6} \Delta_m(\chi)
    \qquad
    \Delta h(\chi) = \frac{m(m+1)(2m+1)}{6} h(\chi) \,.
\end{equation}
Thus, after the usual scalar eigenvalue normalization, replacing the Brownian coefficient $\sqrt{2}$ in \cref{eq:log_geom_dyson} by $1$ yields a Doob $h$-transform on the ordered chamber. Equivalently, this auxiliary diffusion is obtained by conditioning independent Brownian motions with ordered drifts $(-m,-m+1,\ldots,-1)$ never to collide; see \citet[Section~5.2]{bianeBougerolOConnell2005littelmann} and \cref{prop:unit-noise-conditioning}. With coefficient $\sqrt{2}$, the same transform produces drift $2\nabla\log h$, rather than the drift $\nabla\log h$ in \cref{eq:log_geom_dyson}, and therefore does not recover the process studied here.

Nevertheless, the log transform is still useful: we end this section with a determinant identity obtained from the symmetry of the log dynamics. 

\begin{corollary}
[Determinant Distribution]
\label{cor:det_dist}
If $\Phi_0\in\SPD(m)$ is deterministic and $\Phi_t$ evolves according to the covariance SDE of \cref{thm:lin_cov_sde}, then
\begin{equation}
    \log \frac{\det \Phi_t}{\det \Phi_0}
    \sim
    \mathcal{N}\left(
        -\frac{m(m+1)}{2}t \,,\, 2mt
    \right) \,.
\end{equation}
\end{corollary}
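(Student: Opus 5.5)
We work with the process $D_t:=\log\det\Phi_t-\log\det\Phi_0$ and show directly that it is a Brownian motion with constant drift and constant diffusion coefficient. Since $\Phi_0$ is deterministic and the coefficients will turn out to be constants, Gaussianity at each fixed $t$ follows at once.

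\textbf{Step 1 (via the log-eigenvalue SDE).} Write $D_t=\sum_i(\chi_{i,t}-\chi_{i,0})$ and sum \cref{eq:log_geom_dyson} over $i$. The interaction terms pair off: for $i\neq j$ set $a=\chi_i-\chi_j$. Then
\begin{equation}
\frac{1}{e^{a}-1}+\frac{1}{e^{-a}-1}=\frac{1}{e^a-1}-\frac{e^a}{e^a-1}=-1 .
\end{equation}
There are $m(m-1)/2$ unordered pairs, so the summed interaction drift equals $-m(m-1)/2$. Adding the $-m$ coming from the $-dt$ terms gives total drift $-m(m+1)/2$. The martingale part is $\sqrt2\sum_i dB_{i,t}=\sqrt{2m}\,dW_t$ for a standard Brownian motion $W$, because the $B_i$ are independent. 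Hence
\begin{equation}
dD_t=-\tfrac{m(m+1)}{2}\,dt+\sqrt{2m}\,dW_t .
\end{equation}
This gives the claimed normal law with mean $-m(m+1)t/2$ and variance $2mt$.

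\textbf{Step 2 (making it global).} \cref{thm:geom_dyson} only holds locally on $(0,\infty)$ and requires simple spectrum. The cleanest way around this is to avoid eigenvalues altogether and apply It\^o's formula directly to $F(\Phi)=\log\det\Phi$ under \cref{eq:cov_sde_matrix}. This is valid on all of $\SPD(m)$, and no collision issue arises.

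We have $dF=\Tr(\Phi^{-1}d\Phi)$ and $d^2F[\Psi,\Psi]=-\Tr(\Phi^{-1}\Psi\Phi^{-1}\Psi)$. Substituting $d\Phi=\Phi^{1/2}dS\,\Phi^{1/2}$ gives a first-order term $\Tr(dS_t)$. This has quadratic variation $\sum_{i}d\langle S^{ii},S^{ii}\rangle=2m\,dt$, since the diagonal entries are independent and the off-diagonal entries do not contribute to the trace.

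The It\^o correction is $-\tfrac12\,\mathbb{E}$-rate of $\Tr(dS\,dS)$. By \cref{eq:symmetric_brownian} we have $\sum_{\alpha,\beta}d\langle S^{\alpha\beta},S^{\beta\alpha}\rangle=(m^2+m)\,dt$, so the correction is $-\tfrac{m(m+1)}{2}dt$. This reproduces Step 1.

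\textbf{Main obstacle.} The only delicate point is global existence: $\Phi_t$ must remain in $\SPD(m)$ for all $t$, so that $\log\det$ is defined. This follows a posteriori from the computation itself. Up to the exit time $\zeta$ from $\SPD(m)$, the process $D$ equals an explicit Brownian motion with drift, which is a.s.\ finite on compact time intervals. Since $\log\det\Phi_t\to-\infty$ on approach to the boundary, and the positive-definite square root is locally Lipschitz on the interior so that no explosion to infinity occurs, we get $\zeta=\infty$ a.s.

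Finally, the identity $D_t=-\tfrac{m(m+1)}{2}t+\sqrt{2m}W_t$ holds for every $t$. Because $\Phi_0$ is deterministic, $D_t$ is exactly Gaussian with the stated mean and variance.
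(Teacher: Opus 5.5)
Your Step 1 is the paper's proof. You sum \cref{eq:log_geom_dyson} over $i$, pair $(i,j)$ with $(j,i)$ to get $-\binom{m}{2}$, add the $-m$, and collect the martingale part as $\sqrt{2m}\,dB_t$. The paper treats non-simple initial spectrum by working on $[\varepsilon,t]$ via \cref{lem:instantaneous-simplicity-no-collision} and then letting $\varepsilon\downarrow 0$.

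Your Step 2 is a genuinely different, more self-contained route, and your computations are correct: $\Tr(\Phi^{-1}d\Phi)=\Tr(dS)$, $d\langle \Tr S\rangle_t=2m\,dt$, and $\Tr(dS\,dS)=m(m+1)\,dt$. Applying It\^o's formula to $\log\det$ directly under \cref{eq:cov_sde_matrix} gives the pathwise identity $\log\det\Phi_t-\log\det\Phi_0=\Tr S_t-\tfrac{m(m+1)}{2}t$. This bypasses \cref{thm:geom_dyson}, the collision and simplicity analysis, and the $\varepsilon$-approximation. The only input it needs is that $\Phi_t$ stays in $\SPD(m)$.

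The paper in fact performs essentially this computation in the proof of \cref{lem:matrix-realization-covariance-sde}. There, $d\log\det H_t=\tfrac{1}{\sqrt2}\Tr(dW_t)-\tfrac{m(m+1)}{4}\,dt$ is exactly half of your identity, since $\log\det\Phi_t=2\log\det H_t$ and $\Tr S=\sqrt2\,\Tr W$, up to the identification in law used there. So the corollary could be read off from that lemma. The paper's eigenvalue route instead presents the result as a consequence of the geometric Dyson system and its pairwise drift cancellation.

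One justification in your ``main obstacle'' paragraph needs fixing. Local Lipschitz continuity of $\Phi\mapsto\Phi^{1/2}$ on the interior does not rule out explosion to infinity; it only gives uniqueness up to the exit time. Any of the following repairs the step:
\begin{itemize}
\item Use that $\Phi_t$ is given as a continuous solution on the whole time interval.
\item Note that $d\Tr\Phi_t=\Tr(\Phi_t\,dS_t)$ has no drift. So $\Tr\Phi_t$ is a nonnegative local martingale, hence a supermartingale, and $\|\Phi_t\|\le\Tr\Phi_t$ stays bounded on compact time intervals. Leaving $\SPD(m)$ then forces $\det\Phi_t\to 0$, and your McKean-type contradiction goes through.
\item Cite \cref{lem:matrix-realization-covariance-sde}, which proves $\SPD(m)$-invariance by the same $\log\det$ argument applied to $H_t$, where explosion is excluded because $H_t$ solves a linear SDE.
\end{itemize}
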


\begin{proof}

Summing \cref{eq:log_geom_dyson} over $i$ gives\footnote{Strictly speaking, if $\Phi_0$ does not have simple spectrum, one may apply the calculation on $[\varepsilon,t]$, using \cref{lem:instantaneous-simplicity-no-collision}, and then let $\varepsilon \downarrow 0$.}
\begin{equation}
    d \log \det \Phi_t
    =
    \sqrt{2} \sum_{i=1}^m dB_{i,t}
    +
    \sum_{i \neq j}
    \frac{1}{e^{\chi_{i,t}-\chi_{j,t}}-1} \, dt
    -
    m \, dt \,.
\end{equation}
Pairing the ordered terms $(i,j)$ and $(j,i)$, we have
\begin{equation}
    \frac{1}{e^{\chi_i-\chi_j}-1}
    +
    \frac{1}{e^{\chi_j-\chi_i}-1}
    =
    -1 \,.
\end{equation}
Thus the interaction drift contributes $-\binom{m}{2} \, dt$. Since $m^{-1/2}\sum_{i=1}^m B_i(t)$ is a standard Brownian motion, we obtain
\begin{equation}
    d \log \det \Phi_t
    =
    \sqrt{2m} \, dB_t
    -
    \frac{m(m+1)}{2} \, dt \,.
\end{equation}
Integrating from $0$ to $t$ gives
\begin{equation}
    \log \frac{\det \Phi_t}{\det \Phi_0}
    =
    \sqrt{2m}\,B_t
    -
    \frac{m(m+1)}{2}t \,.
\end{equation}
Thus the stated Gaussian law follows.

\end{proof}

\section{Mean-Field Limit and the Free Log-Normal Law}
\label{sec:dynamical-wigner}

This section studies the second, large-$m$ step of the sequential limit. The compactness and subsequential limit argument for \cref{thm:burgers_fixed_point} is given in \cref{sec:appendix-meanfield-gdbm}, while the transform calculation appears below. We then solve the Burgers equation by characteristics to obtain \cref{cor:fixed-point-free-flow}. \cref{lem:meanfield-uniqueness} provides the real axis uniqueness argument and the inverse branch identity used to justify the comparison of transform germs.

We use the $T$-transform $G$ defined in \cref{eq:t_transform}. Since $G(z)=zg(z)-1$ for probability measures, pointwise convergence $G_{\mu_n}(z)\to G_\mu(z)$ on $\mathbb C\setminus\mathbb R_{\geq 0}$ is equivalent to convergence of the Cauchy--Stieltjes transforms, and hence to $\mu_n\xrightarrow{d}\mu$ for $\mu_n,\mu\in\mathcal P(\mathbb R_{\geq 0})$.

A similar calculation gives the inversion formula. Let $z=a+ib$ with $b>0$, and let $P_b(t)=\frac{b}{\pi(t^2+b^2)}$ be the Poisson kernel. Writing $\rho^\sharp(dx):=x\,\rho(dx)$, we have
\begin{equation}
    \label{eq:t_transform_poisson_identity}
    \im\,G(a+ib)
    = \int \frac{-b x}{(x-a)^2+b^2}\,d\rho(x)
    = -\pi\,(P_b*\rho^\sharp)(a)\,.
\end{equation}
Consequently, at any $a>0$ at which $\rho$ has a density and $a$ is a Lebesgue point of that density,
\begin{equation}
    \label{eq:inversion_formula}
    \rho(a) = \lim_{b\to 0^+} \im \frac{ -G(a+ib) }{ \pi a } 
    = \lim_{b\to 0^+} \im \frac{ G(a-ib) - G(a+ib) }{ 2\pi a } \,, 
\end{equation}
Note the extra factor of $a$ in the denominator differs from the usual inversion formula. 

\paragraph{Time-Change Convention.}
We now use the large-$m$ clock convention introduced above: $\tau=mt=\ell m/n$ denotes the accelerated spectral time, so the finite-$m$ covariance diffusion is evaluated at depth time $t=\tau/m$. If $\lambda_{i,t}$ is the $i$th eigenvalue of $\Phi_t$, set
\begin{equation}
\label{eq:timechanged-empirical-measure-section4}
    \mu_{i,\tau}:=\lambda_{i,\tau/m},
    \qquad
    \rho_\tau^{(m)}
    := \frac{1}{m}\sum_{i=1}^m \delta_{\mu_{i,\tau}}.
\end{equation}
Under this time change, the processes $\widehat B_{i,\tau}:=\sqrt m\,B_{i,\tau/m}$ are standard Brownian motions in the $\tau$-clock. After this Brownian rescaling, we drop the hats and write them again as $B_{i,\tau}$. The geometric Dyson Brownian motion for the spectral-time particles becomes
\begin{equation}
\label{eq:gdbm_time_change}
    d\mu_{i,\tau}
    =
    \sqrt{\frac{2}{m}}\,\mu_{i,\tau}\,dB_{i,\tau}
    + \frac{1}{m}\sum_{j\neq i}
        \frac{\mu_{i,\tau}\mu_{j,\tau}}
             {\mu_{i,\tau}-\mu_{j,\tau}}\,d\tau \, .
\end{equation}

The $T$-transform is natural here because the geometric interaction kernel contains the multiplicative factor $xy$. For $f_z(x)=x/(z-x)$, this factor closes the limiting weak equation in terms of $G_\tau(z)$ and $\partial_zG_\tau(z)$. This is the multiplicative analogue of the Stieltjes-transform closure in additive Dyson Brownian motion.

We now restate and prove the main mean-field limit result. 

\burgersfp*

In the main text we isolate the transform calculation that leads from the particle system to the limiting Burgers equation. The tightness, compact-containment estimates, identification of subsequential limits, and uniqueness of the limiting measure-valued path follow the standard scheme of \citet[Proposition 4.3.10 and Lemma 4.3.13]{anderson2010introduction} and are collected in \cref{sec:appendix-meanfield-gdbm} for the present geometric system.

\begin{proof}

We work on the fixed terminal spectral-time interval $[0,\bar\tau]$. 
We first record the weak equation suggested by the particle system \eqref{eq:gdbm_time_change}. 
Let
\begin{equation}
\label{eq:meanfield-test-class}
    \mathcal F_b
    :=
    \left\{
        f\in C^2(\mathbb{R}_{\geq 0})\cap C_b(\mathbb{R}_{\geq 0}) :
        \|x f'(x)\|_\infty + \|x^2 f''(x)\|_\infty < \infty
    \right\}.
\end{equation}
For $f\in\mathcal F_b$, define
\begin{equation}
\label{eq:meanfield-Hf}
    H_f(x,y)
    :=
    \begin{cases}
        \displaystyle
        \frac{xy\bigl(f'(x)-f'(y)\bigr)}{x-y}, & x\ne y, \\[1em]
        x^2 f''(x), & x=y.
    \end{cases}
\end{equation}
The diagonal value is the continuous extension of the off-diagonal expression; the boundedness and continuity estimates are proved in \cref{sec:appendix-meanfield-gdbm}.

Applying It\^o's formula to $\langle f,\rho_\tau^{(m)}\rangle$ and symmetrizing the interaction drift gives the finite-$m$ identity
\begin{equation}
\label{eq:finite-m-weak-main}
\begin{aligned}
    \langle f,\rho_\tau^{(m)}\rangle
    &=
    \langle f,\rho_0^{(m)}\rangle
    +
    M_\tau^{(m,f)}
    \\
    &\quad
    +
    \frac{1}{2}\int_0^\tau
    \iint_{(\mathbb{R}_{\geq 0})^2}
    H_f(x,y)\,\rho_s^{(m)}(dx)\rho_s^{(m)}(dy)\,ds
    \\
    &\quad
    +
    \frac{1}{2m}\int_0^\tau
    \langle x^2 f''(x),\rho_s^{(m)}\rangle\,ds,
\end{aligned}
\end{equation}
where $M_\tau^{(m,f)}$ is the It\^o martingale. The precise martingale estimate, compact containment, and passage to the limit in \eqref{eq:finite-m-weak-main} are given in \cref{lem:meanfield-technical}. Therefore the sequence $(\rho_\tau^{(m)})_{\tau\in[0,\bar\tau]}$ is tight in $C([0,\bar\tau],\mathcal P(\mathbb R_{\geq0}))$, and every subsequential limit $(\rho_\tau)_{\tau\in[0,\bar\tau]}$ is continuous and, for every fixed $f\in\mathcal F_b$, almost surely satisfies, for all $\tau\in[0,\bar\tau]$,
\begin{equation}
\label{eq:limit-weak}
    \langle f,\rho_\tau\rangle
    =
    \langle f,\rho_0\rangle
    +
    \frac{1}{2}
    \int_0^\tau
    \iint_{(\mathbb{R}_{\geq 0})^2}
    H_f(x,y)\,\rho_s(dx)\rho_s(dy)\,ds.
\end{equation}

Let $(\rho_\tau)_{\tau\in[0,\bar\tau]}$ be one such subsequential limit. We now identify the equation satisfied by its $T$-transform; uniqueness will then follow from \cref{lem:meanfield-uniqueness}.

Now fix $z\in\mathbb{C}\setminus\mathbb{R}_{\geq 0}$ and define
\begin{equation}
    f_z(x):=\frac{x}{z-x} \,. 
\end{equation}
Since
\begin{equation}
    f_z'(x)=\frac{z}{(z-x)^2} \,,
    \qquad
    f_z''(x)=\frac{2z}{(z-x)^3} \,,
\end{equation}
the real and imaginary parts of $f_z$ belong to $\mathcal F_b$. Define
\begin{equation}
    G_\tau(z):=\langle f_z,\rho_\tau\rangle
    =
    \int_{\mathbb{R}_{\geq 0}} \frac{x}{z-x}\,\rho_\tau(dx) \,. 
\end{equation}
Applying \eqref{eq:limit-weak} to the real and imaginary parts of $f_z$ and recombining gives
\begin{equation}
    G_\tau(z)
    =
    G_0(z)
    +
    \frac{1}{2}
    \int_0^\tau
    \iint_{(\mathbb{R}_{\geq 0})^2}
    H_{f_z}(x,y)\,\rho_s(dx)\rho_s(dy)\,ds \,. 
\end{equation}
A direct computation yields
\begin{equation}
    f_z'(x)-f_z'(y)
    =
    z\frac{(z-y)^2-(z-x)^2}{(z-x)^2(z-y)^2}
    =
    z\frac{(y-x)(x+y-2z)}{(z-x)^2(z-y)^2} \,,
\end{equation}
hence
\begin{equation}
    H_{f_z}(x,y)
    =
    \frac{xyz(2z-x-y)}{(z-x)^2(z-y)^2}
    =
    z\frac{x}{z-x}\frac{y}{(z-y)^2}
    +
    z\frac{y}{z-y}\frac{x}{(z-x)^2} \,. 
\end{equation}
By symmetry of the double integral,
\begin{equation}
    \frac{1}{2}
    \iint H_{f_z}(x,y)\,\rho_s(dx)\rho_s(dy)
    =
    z
    \left(
        \int \frac{x}{z-x}\,\rho_s(dx)
    \right)
    \left(
        \int \frac{y}{(z-y)^2}\,\rho_s(dy)
    \right) \,. 
\end{equation}
Using
\begin{equation}
    \partial_z G_s(z)
    =
    -
    \int_{\mathbb{R}_{\geq 0}} \frac{x}{(z-x)^2}\,\rho_s(dx) \,,
\end{equation}
we obtain
\begin{equation}
    G_\tau(z)
    =
    G_0(z)
    -
    \int_0^\tau z G_s(z)\,\partial_z G_s(z)\,ds \,. 
\end{equation}
Therefore
\begin{equation}
    \partial_\tau G_\tau(z) = - z G_\tau(z)\,\partial_z G_\tau(z) \,,
    \qquad
    G_0(z)=\int_{\mathbb{R}_{\geq 0}} \frac{x}{z-x}\,\rho_0(dx) \,. 
\end{equation}

Applying the subsequential-limit argument simultaneously to the real and imaginary parts of $f_z$ for $z$ in a countable dense subset of $\mathbb{C}\setminus\mathbb{R}_{\geq 0}$, and then using holomorphy in $z$, places this transform equation on a single probability-one event for every $z\in\mathbb{C}\setminus\mathbb{R}_{\geq 0}$. The uniqueness argument in \cref{lem:meanfield-uniqueness} then shows that every subsequential limit equals the same deterministic path. Therefore the full sequence converges in law to this deterministic path, which is equivalent to convergence in probability in $C([0,\bar\tau],\mathcal{P}(\mathbb{R}_{\geq 0}))$.

\end{proof}

\fixedpointflow*

\begin{proof}
To make the model-to-law connection explicit, we derive the transform identities directly from the Burgers equation in the mean-one covariance normalization.
Write the Burgers equation from \cref{thm:burgers_fixed_point} as
\begin{equation}
    \partial_\tau G_\tau(z)+zG_\tau(z)\partial_zG_\tau(z)=0.
\end{equation}
For a characteristic curve, take
\begin{equation}
    \dot Z_s=Z_sG_s(Z_s),
    \qquad
    Z_\tau=z.
\end{equation}
Then
\begin{equation}
    \frac{d}{ds}G_s(Z_s)=0.
\end{equation}
Writing $w=G_\tau(z)$, we have $G_s(Z_s)=w$ along the characteristic, and therefore $\dot Z_s=wZ_s$. Hence
\begin{equation}
    Z_0=ze^{-\tau w}=ze^{-\tau G_\tau(z)}.
\end{equation}
This gives the characteristic relation
\begin{equation}
    G_\tau(z)=G_0\!\left(ze^{-\tau G_\tau(z)}\right).
\end{equation}
\cref{lem:meanfield-uniqueness} rigorously justifies this characteristic identity on the negative real axis, selects the physical branch, and records the inverse branch identity used below.

When $\rho_0=\delta_1$, $G_0(z)=1/(z-1)$. Thus, for $u<0$, \cref{lem:meanfield-uniqueness} gives
\begin{equation}
    G_\tau(u)=\frac{1}{ue^{-\tau G_\tau(u)}-1}.
\end{equation}
Define
\begin{equation}
    F_\tau(z):=G_\tau(z)\bigl(ze^{-\tau G_\tau(z)}-1\bigr)-1.
\end{equation}
The function $F_\tau$ is holomorphic on $D=\mathbb C\setminus\mathbb R_{\geq 0}$ and vanishes on $(-\infty,0)$. By the identity theorem, $F_\tau\equiv0$ on $D$, and hence
\begin{equation}
    G_\tau(z)=\frac{1}{ze^{-\tau G_\tau(z)}-1},
    \qquad z\in D,
\end{equation}
which is \eqref{eq:fixed_point}.

Since the inverse branch for $\nu_\tau$ is \(K_{\nu_\tau}(w)=e^{\tau w}(1+1/w)\), the relation \(K_\mu(w)=(1+1/w)/S_\mu(w)\) gives \(S_{\nu_\tau}(w)=e^{-\tau w}\) as a germ near the origin. Here the \(S\)-transform is well-defined by the compact support of \(\nu_\tau\) shown in \cref{cor:fixed_point_support} below.

We now identify the general compactly supported solution. By the inverse-branch identity in \cref{lem:meanfield-uniqueness}, if \(K_\tau\) and \(K_0\) denote the inverse branches of \(G_\tau\) and \(G_0\) corresponding to \(z=\infty\), then
\begin{equation}
    K_\tau(w)=e^{\tau w}K_0(w).
\end{equation}
On the other hand, multiplicativity of the \(S\)-transform under free multiplicative convolution gives
\begin{equation}
\begin{aligned}
    K_{\rho_0\boxtimes\nu_\tau}(w)
    &= \frac{1+1/w}{S_{\rho_0\boxtimes\nu_\tau}(w)} \\
    &= \frac{1+1/w}{S_{\rho_0}(w)e^{-\tau w}} \\
    &= e^{\tau w}K_0(w)
     = K_\tau(w).
\end{aligned}
\end{equation}
Thus the \(T\)-transforms agree as inverse germs near \(w=0\). Since the \(T\)-transform determines the probability measure,
\begin{equation}
    \rho_\tau=\rho_0\boxtimes\nu_\tau.
\end{equation}
\end{proof}

\subsection{Support of the Free Log-Normal}

We now recover the known support formula in the mean-one normalization directly from the inverse branch in \cref{eq:fixed_point}; see \citet[Section~4.2]{biane1997segal} and \citet[Proposition~4.13]{zhong2015freeMultiplicativeNormal}. The real-branch argument in \cref{lem:support-branch-analysis} verifies that the two real critical values are the actual support endpoints.

\begin{corollary}[Support of the Free Log-Normal]
\label{cor:fixed_point_support}
Let $\nu_\tau$ be the free log-normal law from \cref{cor:fixed-point-free-flow}.
The law $\nu_\tau$ is supported on the single interval
\begin{equation}
\label{eq:exact_support}
    \operatorname{supp}\nu_\tau = [K_\tau(w_-), K_\tau(w_+)] \,, 
    \quad 
    K_\tau(w)=e^{\tau w}\left(1+\frac{1}{w}\right) \,, 
    \quad 
    w_\pm = \frac{-1 \pm \sqrt{1+4/\tau}}{2} \,. 
\end{equation}
\end{corollary}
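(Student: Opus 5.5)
We prove the support formula through the inverse branch $K_\tau(w)=e^{\tau w}(1+1/w)$ of the $T$-transform. By \eqref{eq:fixed_point}, writing $w=G_\tau(z)$ gives $z e^{-\tau w}=1+1/w$, so $z=K_\tau(w)$. The physical branch is the local inverse of $K_\tau$ near $w=0$, where $K_\tau(w)\sim 1/w$ and hence $z\to\infty$. The plan follows the standard subordination/inverse-function picture: the complement of the support is the set of real $x$ at which $G_\tau$ extends analytically and real-valued across the real axis. On this set $G_\tau$ is real and strictly decreasing (since $G_\tau'(x)=-\int y(x-y)^{-2}\nu_\tau(dy)<0$ off the support), and it is inverted by $K_\tau$ restricted to appropriate real intervals of $w$.

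First compute the critical points. Differentiating $\log K_\tau(w)=\tau w+\log(w+1)-\log w$ gives
\[
\tau+\frac{1}{w+1}-\frac1w=0 ,
\]
which rearranges to $\tau w^2+\tau w-1=0$, so $w_\pm=\tfrac{-1\pm\sqrt{1+4/\tau}}{2}$. Note that $w_+>0$ and $w_-<-1$.

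Next, analyse $K_\tau$ on the real $w$-axis.
\begin{itemize}
\item On $(0,\infty)$, $K_\tau$ tends to $+\infty$ as $w\to0^+$ and as $w\to\infty$, and has its unique minimum at $w_+$. The branch $w\in(0,w_+)$ maps decreasingly onto $(K_\tau(w_+),\infty)$ and corresponds to $G_\tau(x)>0$ for $x$ to the right of the support.
\item On $(-\infty,-1)$, we have $K_\tau>0$, with $K_\tau\to0$ as $w\to-\infty$ and $K_\tau\to0$ as $w\to-1^-$. Its maximum there is at $w_-$. The branch $(w_-,-1)$ maps onto $(0,K_\tau(w_-))$ and matches $G_\tau(x)\in(-1,\cdot)$ to the left of the support. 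This is consistent with $G_\tau(0^+)=-1$ when there is no atom at $0$.
\item On $(-1,0)$, $K_\tau<0$ and $K_\tau$ is monotone, covering the negative axis where $G_\tau\in(-1,0)$.
\end{itemize}
Gluing these branches gives a real-analytic decreasing function on $(-\infty,K_\tau(w_-))\cup(K_\tau(w_+),\infty)$ that agrees with $G_\tau$ near $\infty$ and on $(-\infty,0)$. By the identity theorem it equals $G_\tau$ there. Hence $\nu_\tau$ puts no mass outside $[K_\tau(w_-),K_\tau(w_+)]$, using \eqref{eq:inversion_formula}: $\im G_\tau\to0$ on these intervals. Absence of an atom at $0$ follows because $G_\tau(x)\to -1$ continuously along the branch $w\to-1$.

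The main obstacle is the converse: showing the whole interval is in the support, i.e.\ that $\im G_\tau(x+i0)\neq0$ for $x\in(K_\tau(w_-),K_\tau(w_+))$, with no real solution being physical there. Our approach is to argue that for such $x$ every real solution $w$ of $K_\tau(w)=x$ lies on a non-physical branch: either $w>w_+$ or $w<w_-$. Those branches have $K_\tau'$ of the wrong sign relative to the requirement $G'<0$, which is equivalent to $K'<0$. If $G_\tau$ were real-analytic at such an $x$, continuity of the branch from the physical intervals would force it through the critical value, which is impossible. As a cleaner alternative, we would show that the non-real solution $w(x)$ with $\im w<0$ of $K_\tau(w)=x$ varies continuously and reaches the real critical points exactly at the endpoints. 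This yields $\im G_\tau<0$ throughout the open interval and so establishes connectedness of the support. Both routes are the real-branch argument attributed to \cref{lem:support-branch-analysis}, and we would carry that out as the technical core.
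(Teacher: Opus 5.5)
Your treatment of the complement of the support follows the paper's proof of \cref{lem:support-branch-analysis}. That covers the inverse relation $z=K_\tau(w)$, the critical points, the three real monotonicity intervals of $K_\tau$, continuation of the physical branch from $(-1,0)$ through $w=-1$ into $(w_-,-1)$, and the exclusion of an atom at $0$. The continuation step uses $K_\tau'(-1)=-e^{-\tau}\neq0$, which you should state. There is also one slip: to the left of the support the physical branch takes values in $(w_-,-1)$. So $G_\tau(x)<-1$ there, not $G_\tau(x)\in(-1,\cdot)$.

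The gap is the reverse inclusion. You call it the main obstacle and then do not prove it.

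\emph{Route 1 misreads the geometry.} Write $a_\tau:=K_\tau(w_-)$ and $b_\tau:=K_\tau(w_+)$. Your own bullets show that $K_\tau$ maps $(w_+,\infty)$ into $(b_\tau,\infty)$ and $(-\infty,w_-)$ into $(0,a_\tau)$. So for $x\in(a_\tau,b_\tau)$ there is no real solution of $K_\tau(w)=x$ on any branch, physical or not.

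\emph{The continuity argument does not reach interior gaps.} Appealing to ``continuity of the branch from the physical intervals'' cannot rule out a gap $I$ lying strictly inside $(a_\tau,b_\tau)$. Such an $I$ is not connected to those intervals through the complement of the support.

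\emph{Route 2 is only a program.} Tracking a continuous non-real root and the boundary values $G_\tau(x+i0)$ is never carried out.

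The missing step follows in two lines from what you already computed, namely the real-range identity
\[
    K_\tau(\mathbb{R}\setminus\{0\})=(-\infty,a_\tau]\cup[b_\tau,\infty).
\]
Suppose an open interval $I\subset(a_\tau,b_\tau)$ were disjoint from $\operatorname{supp}\nu_\tau$. Then $G_\tau$ would extend real-analytically across $I$ with real values. The identity $K_\tau(G_\tau(z))=z$ would persist on $I$; note $G_\tau(x)\neq0$ there, since $K_\tau$ has a pole at $0$. This produces a real $w=G_\tau(x)$ with $K_\tau(w)=x\in(a_\tau,b_\tau)$, a contradiction. Since the support is closed, this gives $\operatorname{supp}\nu_\tau=[a_\tau,b_\tau]$. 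This is exactly how the paper concludes, with no sign or branch-tracking argument needed.
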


\begin{proof}
Set $w=G_\tau(z)$ in \cref{eq:fixed_point}. Then the fixed-point equation gives the inverse relation
\begin{equation}
    z = K_\tau(w) := e^{\tau w}\left(1+\frac{1}{w}\right).
\end{equation}
By \cref{lem:support-branch-analysis}, the support of the physical branch is the interval whose endpoints are the two real critical values of $K_\tau$.
Differentiating gives
\begin{equation}
    K_\tau'(w)
    = e^{\tau w}\frac{\tau w^2+\tau w-1}{w^2}.
\end{equation}
Thus the critical points satisfy
\begin{equation}
    \tau w^2+\tau w-1=0,
\end{equation}
and hence
\begin{equation}
    w_\pm
    = \frac{-1\pm\sqrt{1+4/\tau}}{2}.
\end{equation}
Evaluating $K_\tau$ at these two points gives \cref{eq:exact_support}.
\end{proof}

\subsection{Formal Small-Time Expansion}
\label{subsec:small_time_expansion}

In the free log-normal case, we can also extract a formal small-time approximation to the density of $\nu_\tau$ directly from the fixed-point equation. Starting from
\begin{equation}
    G_\tau(z)=\frac{1}{ze^{-\tau G_\tau(z)}-1} 
\end{equation}
and making the first-order formal approximation
\begin{equation}
    e^{-\tau G_\tau(z)} \approx 1 - \tau G_\tau(z)
\end{equation}
gives us the quadratic equation 
\begin{equation}
\label{eq:formal_small_time_quadratic_closure}
    \tau z G_\tau(z)^2 - (z-1) G_\tau(z) + 1 = 0 \,. 
\end{equation}
Choosing the branch with $G_\tau(z) \sim 1/z$ as $|z| \to \infty$, this closure gives the formal approximation 
\begin{equation}
\label{eq:formal_small_time_transform}
    G_\tau(z)
    \approx \frac{z-1-\sqrt{(z-1)^2-4\tau z}}{2\tau z} \,. 
\end{equation}
The corresponding branch cut lies on the interval with endpoints
\begin{equation}
\label{eq:formal_small_time_edges}
    {\lambda}_\pm
    = \left(\sqrt{1+\tau} \pm \sqrt{\tau}\right)^2 \,. 
\end{equation}
Using the $T$-transform inversion formula \cref{eq:inversion_formula}, this formal calculation leads to the small-time approximation:
\begin{equation}
\label{eq:small_time_mp_density}
    \nu_\tau(x)
    \approx \frac{\sqrt{(x-{\lambda}_-)({\lambda}_+-x)}}{2\pi\tau x^2} \, ,
    \quad 
    x \in [{\lambda}_-, {\lambda}_+] \,, 
\end{equation}
and the extra factor of $x^{-1}$ in \cref{eq:small_time_mp_density} comes from the inversion formula $\nu_\tau(x)=-(\pi x)^{-1}\Im G_\tau(x+i0)$.

The density in \cref{eq:small_time_mp_density} is the $x^{-1}$-reweighted version of the scaled Marchenko--Pastur law in the covariance-matrix convention with parameters $\sigma^2=1+\tau$ and $c=\tau/(1+\tau)$.

\subsection{Fixed-Point Iteration}
\label{subsec:fixed-point-iteration}

For the free log-normal case, the fixed-point equation in \cref{cor:fixed-point-free-flow} provides a direct numerical realization of the
limiting \(T\)-transform. Fix \(\tau>0\) and \(z\in\mathbb{C}\setminus\mathbb{R}\), and define
\begin{equation}
T_{\tau,z}(w) := \frac{1}{z e^{-\tau w}-1}.
\end{equation}
We seek the fixed point \(w=T_{\tau,z}(w)\) corresponding to the analytic branch of \(G_\tau\),
characterized by
\begin{equation}
G_\tau(z)\sim \frac{1}{z}\quad \text{as } |z|\to\infty,
\qquad
\Im G_\tau(z)<0\quad \text{for } \Im z>0.
\end{equation}
A natural initialization is the \(\tau=0\) value
\begin{equation}
w^{(0)} = G_0(z)=\frac{1}{z-1},
\end{equation}
and we then iterate
\begin{equation}
w^{(k+1)}=(1-\theta)w^{(k)}+\theta\,T_{\tau,z}(w^{(k)}),
\qquad 0<\theta\le 1,
\end{equation}
until both
\begin{equation}
\label{eq:fixed-point-stopping-rule}
\left|w^{(k+1)}-w^{(k)}\right|
\le \varepsilon\left(1+\left|w^{(k+1)}\right|\right),
\qquad
\left|w^{(k+1)}-T_{\tau,z}\!\left(w^{(k+1)}\right)\right|
\le \varepsilon\left(1+\left|w^{(k+1)}\right|\right) \, .
\end{equation}
For $\Im z>0$, we additionally accept only an iterate with $\Im w^{(k+1)}<0$. The residual and half-plane checks are essential: a small relaxed update alone need not certify either the fixed-point equation or consistency with the transform branch. The half-plane condition is necessary but does not by itself select the desired fixed point; that branch is tracked through the initialization and continuation described below. At a fixed point, $T'_{\tau,z}(w)=\tau w(w+1)$, and this derivative equals one at the real support critical points. Thus the iteration becomes nearly noncontractive near the support edges as the regularization parameter tends to zero. Moderate damping can improve robustness away from this limiting degeneracy, but does not remove the edge slowdown.

This is the safeguarded fixed-point iteration used to compute the numerical density plot in \cref{fig:intro_density}. For that figure, we used $\tau=0.1$, $\eta=10^{-3}$, $\theta=0.20$, $\varepsilon=10^{-12}$, a cap of $10^4$ iterations, and a grid of $1600$ points on $[10^{-4},3]$. The first grid point was initialized with $G_0$, and each subsequent point was warm-started from the preceding solution. The displayed curve was masked outside the exact support to remove the small regularization tails produced by $\eta>0$.

Once \(G_\tau(x+i\eta)\) has been computed on a grid of \(x\)-values with a small \(\eta>0\), the
inversion formula \eqref{eq:inversion_formula} yields the regularized density
\begin{equation}
p_{\tau,\eta}(x):= -\frac{1}{\pi x}\Im G_\tau(x+i\eta).
\end{equation}
Using the support interval in \cref{eq:exact_support}, one may sweep across an \(x\)-grid covering
the support and use the converged value at the previous grid point as the initial guess
at the next point. 
For larger values of \(\tau\), or whenever the residual or half-plane check fails at the target time, the iteration can be further stabilized by continuation
in time: choose \(0=\tau_0<\tau_1<\cdots<\tau_L=\tau\), solve the fixed-point problem successively at
each \(\tau_r\), and warm-start the computation at \(\tau_{r+1}\) with the converged solution at
\(\tau_r\). 
We do not pursue a convergence analysis here, but these safeguards give a reproducible and
effective numerical approximation of the limiting spectral law.

\subsection{A Toy Random-Feature Model}
\label{subsec:theory-practice-generalization}

The spectral law above suggests a simple downstream prediction for the random-feature model based on the final features \(h_d\). 
We keep this comparison deliberately modest: it is still a fixed-design, spectrum-only toy model, whose purpose is to test whether the limiting spectral law gives a useful finite-dimensional prediction.
This viewpoint is in the same spirit as spectral analyses of random-feature and kernel ridge regression \cite{louart2018random,mei2022generalization,schroder2023deterministic,bosch2023precise,ghorbani2021linearized,xiao2022precise}, but here the feature covariance itself is produced by the depth evolution studied above.

Let
\begin{equation}
\label{eq:generalization-final-covariance}
    \Phi_d=\frac{1}{n}h_d^\top h_d
\end{equation}
be the final feature covariance. For a ridge parameter \(\gamma>0\), the fitted values on the \(m\) design points are
\begin{equation}
\label{eq:ridge-fitted-values}
    \widehat f_\gamma
    =
    \Phi_d(\Phi_d+\gamma I_m)^{-1}y .
\end{equation}
We use a simple feature-teacher label model:
\begin{equation}
\label{eq:feature-teacher-label-model}
\begin{aligned}
    y &= f^{(m)}+\varepsilon^{(m)},
    &
    f^{(m)} &= h_d^\top\beta, \\
    \beta &\sim \mathcal N\!\left(0,\frac{\sigma_1^2}{n}I_n\right),
    &
    \varepsilon^{(m)} &\sim \mathcal N(0,\sigma_\varepsilon^2I_m).
\end{aligned}
\end{equation}
The random variables above are independent conditional on \(h_d\), and the conditional law of \(f^{(m)}\) depends on \(h_d\) only through \(\Phi_d\):
\begin{equation}
\label{eq:feature-teacher-conditional-covariance}
    f^{(m)}\mid \Phi_d
    \sim
    \mathcal N\!\left(0,\sigma_1^2\Phi_d\right).
\end{equation}
Conditional on \(\Phi_d\), the risk is
\begin{equation}
\label{eq:finite-m-spectral-risk}
    \mathcal E_m(\gamma\mid\Phi_d)
    =
    \frac{1}{m}
    \Tr\!\left[
        \sigma_1^2\gamma^2\Phi_d(\Phi_d+\gamma I_m)^{-2}
        +
        \sigma_\varepsilon^2\Phi_d^2(\Phi_d+\gamma I_m)^{-2}
    \right].
\end{equation}
\begin{figure}[t]
\centering
\includegraphics[width=0.7\linewidth]{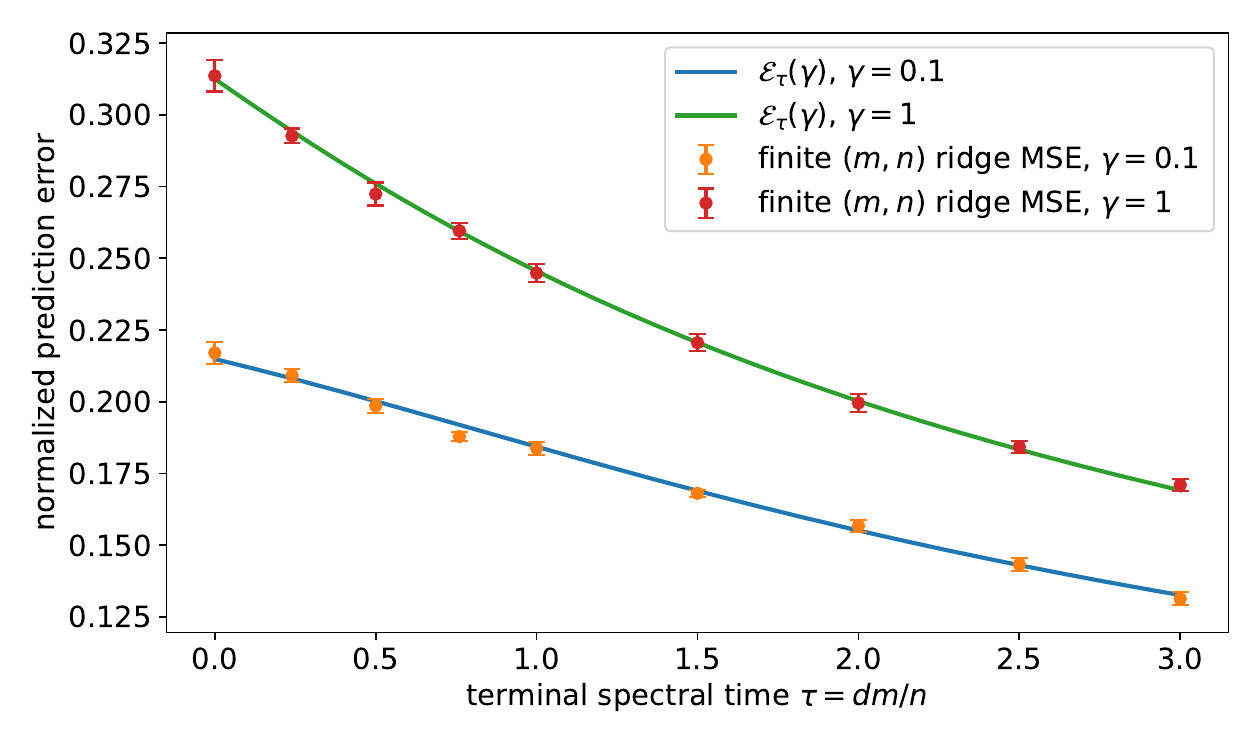}
\caption{Comparison between the limiting spectrum-based prediction \(\mathcal E_\tau(\gamma)\) and the finite-\((m,n)\) ridge prediction error in the feature-teacher toy model with $\rho_0=\delta_1$. The curves show the theoretical prediction computed from \(G_\tau\), and the markers show the empirical ridge mean-squared error from the covariance-chain simulation at matched values of \(\tau=dm/n\). Error bars show \(\pm 2\) standard errors. The figure uses \(\sigma_1^2=1\) and \(\sigma_\varepsilon^2=1/4\).}
\label{fig:theory-practice-generalization}
\end{figure}
The conditional Gaussian trace calculation and the transform identities used below are recorded in \cref{sec:appendix-toy-ridge-risk}. Thus the conditional error is a linear spectral statistic of \(\Phi_d\). If \(d\) is chosen so that \(dm/n\approx\tau\), the limiting prediction is
\begin{equation}
\label{eq:limiting-spectral-risk}
    \mathcal E_\tau(\gamma)
    =
    \int_{\mathbb R_{\geq 0}}
    \frac{\sigma_1^2\gamma^2x+\sigma_\varepsilon^2x^2}{(x+\gamma)^2}\,
    \rho_\tau(dx).
\end{equation}
In terms of the limiting \(T\)-transform,
\begin{equation}
\label{eq:limiting-risk-transform}
    \mathcal E_\tau(\gamma)
    =
    -\sigma_1^2\gamma^2\partial_zG_\tau(-\gamma)
    +
    \sigma_\varepsilon^2\bigl(-G_\tau(-\gamma)+\gamma\partial_zG_\tau(-\gamma)\bigr).
\end{equation}
When $\rho_0=\delta_1$, \(G_\tau\) is the physical branch from \cref{eq:fixed_point}; differentiating that fixed-point equation gives
\begin{equation}
\label{eq:identity-start-transform-derivative}
    \partial_zG_\tau(z)
    =
    \frac{G_\tau(z)^2e^{-\tau G_\tau(z)}}{\tau G_\tau(z)^2+\tau G_\tau(z)-1}.
\end{equation}
Equations \eqref{eq:fixed_point}, \eqref{eq:limiting-risk-transform}, and \eqref{eq:identity-start-transform-derivative} give the theoretical curves in \cref{fig:theory-practice-generalization}.

We compare this prediction with a finite covariance-chain simulation. For terminal spectral times \(\tau_j\), choose depths \(d_j\) with \(d_jm/n\approx\tau_j\), and let \(\Phi_{d_j}^{(b)}\) denote the covariance matrix in simulation batch \(b\). For each label replicate \(r\), we draw \(f_{b,r}^{(m)}\mid\Phi_{d_j}^{(b)}\sim\mathcal N(0,\sigma_1^2\Phi_{d_j}^{(b)})\), draw independent noise \(\varepsilon_{b,r}^{(m)}\sim\mathcal N(0,\sigma_\varepsilon^2I_m)\), and set \(y_{b,r}=f_{b,r}^{(m)}+\varepsilon_{b,r}^{(m)}\). With \(R\) independent label draws per batch, the empirical error is
\begin{equation}
\label{eq:empirical-finite-risk}
    \widehat{\mathcal E}_{m,n}(\tau_j,\gamma)
    =
    \frac{1}{BRm}
    \sum_{b=1}^B\sum_{r=1}^R
    \left\|
        \Phi_{d_j}^{(b)}
        \bigl(\Phi_{d_j}^{(b)}+\gamma I_m\bigr)^{-1}y_{b,r}
        -
        f_{b,r}^{(m)}
    \right\|_2^2 .
\end{equation}
The point of the comparison is to place \(\mathcal E_{\tau_j}(\gamma)\) and \(\widehat{\mathcal E}_{m,n}(\tau_j,\gamma)\) on the same spectral-time scale.

\cref{fig:theory-practice-generalization} shows that the limiting law gives an accurate finite-dimensional prediction in this setting. The limitation is equally important: the spectrum-only reduction works here because the conditional target covariance is a scalar function of \(\Phi_d\). For targets whose covariance is not a function of \(\Phi_d\), or for deterministic and structured targets, the eigenvector alignment with \(\Phi_d\) matters. A genuine out-of-sample generalization theory would also require train--test information not encoded by the empirical spectrum alone.

\section{Discussion} 
\label{sec:discussion-conclusion}

Together with the proportional depth-width limit of \citet{li2022neural}, our large-$m$ theorem provides, to our knowledge, the first rigorous route from a deep non-square Gaussian matrix product to the free log-normal limit of its feature covariance spectrum in the sequential regime considered here. The free log-normal law itself was studied previously in \cite{bercovici1992levy}; its Burgers equation appears formally in \cite{potters2020first}, and \citet[Theorem~6]{maleckiPerez2022universality} prove the corresponding geometric-flow empirical path limit for positive point-mass initial measures.

The random-feature model in \cref{subsec:theory-practice-generalization}, while intentionally a toy fixed-design calculation, demonstrates that the limiting spectral law can give quantitatively accurate predictions for ridge regression.
Extending this comparison to more realistic settings would require understanding how training dynamics affect the spectral evolution, as well as how target structure aligns with the eigenvectors of the feature covariance.

Several natural directions remain open. One is universality beyond Gaussian weights. The first limiting step uses the exact conditional Gaussian structure of the layerwise covariance chain. It remains to determine whether broader weight distributions converge to the same covariance diffusion, or at least to the same large-$m$ limiting spectral path.

A second direction is a direct joint limit in which the depth $d$, width $n$, and number of data points $m$ diverge simultaneously. Such a result would require approximation estimates for the finite-$d,n$ covariance recursion that remain uniform as $m$ grows. The simulations in \cref{fig:intro_density} suggest that the sequential law may remain informative at finite dimensions, but they do not establish equality of the sequential and joint limits.

A third direction, as briefly discussed above, is to study a more realistic setting, where the network allows for non-linearities, and the training dynamics are included. 
Both yield significant challenges. 
Non-linearities break the affine-invariance structure, which couples the eigenvalue and eigenvector dynamics and leads to a much more complicated system. 
Training dynamics break the independence structure of the random matrices, which was a key ingredient in the analysis of this work.

\section*{Acknowledgements}

The authors would like to thank 
Benjamin Budway, 
Sinho Chewi, 
Boris Hanin, 
Tianze Jiang, 
Govind Menon, 
Pierre Mergny, 
and Leonard Wong 
for insightful discussions and pointers on this topic. 
ML and JdDP would also like to thank the Princeton ML Theory Summer School for bringing the authors together for this project. 
ML acknowledges the support of the Natural Sciences and Engineering Research Council of Canada (NSERC), RGPIN-2026-05287.
DMR is supported by an NSERC Discovery Grant and a Canada CIFAR AI Chair at the Vector Institute.

\printbibliography[heading=bibintoc]

\appendix

\section{Calculation on the Vandermonde Determinant}
\label{sec:appendix-vandermonde}

Since $\frac{\di_x(e^{-x}-e^{-y})}{e^{-x}-e^{-y}}=\frac{1}{e^{x-y}-1}$, define the modified Vandermonde determinant by
\begin{equation}
\label{eq:modified-vandermonde}
    \Delta_m(\chi) 
    = \prod_{1 \leq i < j \leq m} 
    \left( e^{ - \chi_i} - e^{- \chi_j} \right) \,, 
\end{equation}
which satisfies 
\begin{equation}
\label{eq:modified-vandermonde-derivative}
    \di_i \Delta_m(\chi) 
    = \sum_{j \neq i} \frac{ \Delta_m(\chi) }{ e^{\chi_i - \chi_j} - 1 } \,. 
\end{equation}

However, it is not harmonic; indeed 
\begin{equation}
\label{eq:modified-vandermonde-second-derivative}
\begin{aligned}
    \di_{ii} \Delta_m(\chi) 
    &= 
        \di_i \sum_{j \neq i} 
        \frac{ \Delta_m(\chi) }{ e^{\chi_i - \chi_j} - 1 } 
        \\ 
    &= 
        \sum_{j \neq i} \sum_{ k \neq i }
        \frac{\Delta_m(\chi)}{ ( e^{\chi_i - \chi_j} - 1 ) ( e^{\chi_i - \chi_k} - 1 ) } 
        - \sum_{j \neq i} \frac{ \Delta_m(\chi) e^{\chi_i - \chi_j} }{
        ( e^{\chi_i - \chi_j} - 1 )^2 } 
        \\ 
    &= 
        \sum_{j \neq i} \sum_{ k \neq i }
        \frac{\Delta_m(\chi)}{ ( e^{\chi_i - \chi_j} - 1 ) ( e^{\chi_i - \chi_k} - 1 ) } 
        + \sum_{j \neq i} \frac{ \Delta_m(\chi) }{
        ( e^{\chi_i - \chi_j} - 1 ) 
        ( e^{\chi_j - \chi_i} - 1 ) } 
        \,. 
\end{aligned}
\end{equation}

Observe that the diagonal terms with $j=k$ no longer cancel because of the extra $e^{\chi_i-\chi_j}$ term, and the identity 
$\frac{1}{(a-b)(a-c)} 
    + \frac{1}{(b-a)(b-c)} 
    + \frac{1}{(c-a)(c-b)} 
    = 0$
cannot be used here.

The mixed triple sum is handled by the following symmetry identity. 

\begin{lemma}[Three Index Symmetry Sum]
\label{lm:three_index_symmetry}
With \(Q_{ij}=e^{\chi_i-\chi_j}\), one has
\begin{equation}
\label{eq:three-index-symmetry-sum}
    \sum_{\substack{i,j,k\in[m]\\ i,j,k\ \mathrm{distinct}}}
    \frac{1}{(Q_{ij}-1)(Q_{ik}-1)} 
    = \frac{m(m-1)(m-2)}{3} \,. 
\end{equation}
\end{lemma}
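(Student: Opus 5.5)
The plan is to reduce the sum to a sum over unordered triples and show that each unordered triple $\{a,b,c\}$ of distinct indices contributes exactly $2$. There are $\binom{m}{3}$ such triples, so this gives $2\binom{m}{3}=\frac{m(m-1)(m-2)}{3}$, which is the claim.

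Set $x_i := e^{-\chi_i}$, which are positive and, on the simple-spectrum set, distinct. Then
\[
Q_{ij}-1 = \frac{x_j-x_i}{x_i}, \qquad \frac{1}{Q_{ij}-1}=\frac{x_i}{x_j-x_i}.
\]
For a fixed unordered triple $\{a,b,c\}$, the ordered triples $(i,j,k)$ with distinct entries drawn from it number six. For each choice of $i$, the two orderings of $(j,k)$ give the same summand. The triple's contribution is therefore
\[
2\left[\frac{x_a^2}{(x_b-x_a)(x_c-x_a)}+\frac{x_b^2}{(x_a-x_b)(x_c-x_b)}+\frac{x_c^2}{(x_a-x_c)(x_b-x_c)}\right].
\]
Each denominator is $(x_i-x_j)(x_i-x_k)$ up to the sign $(-1)^2=1$.

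The bracket is the classical divided-difference or Lagrange identity $\sum_i x_i^p/\prod_{j\ne i}(x_i-x_j)$ with $p=2$ and three nodes. By partial fractions, this equals the leading coefficient of $x^2$ in the degree-2 interpolant of $x^2$, which is $1$. One can see this either from the general formula $\sum_i x_i^{p}/\prod_{j\neq i}(x_i-x_j)=h_{p-n+1}(x)$ (complete homogeneous symmetric polynomial, with $n=3$ nodes), or by clearing the Vandermonde denominator directly and checking that the numerator is $-(x_a-x_b)(x_b-x_c)(x_c-x_a)$ with the appropriate sign. Hence each triple contributes $2$, and summing over $\binom m3$ triples finishes the proof.

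The main obstacle is only bookkeeping. The signs must be right when converting $1/((Q_{ij}-1)(Q_{ik}-1))$ into the symmetric Lagrange form, since both factors flip sign together and so the product keeps sign $+1$. It also has to be confirmed that the $x^2$ (not $x^0$ or $x^1$) moment is the one that appears. The $x^0$ and $x^1$ moments would give $0$, which is precisely why the classical Vandermonde is harmonic while the modified one is not. A quick sanity check with $m=3$ and specific values, say $x=(1,2,3)$, confirms that the bracket equals $1$.
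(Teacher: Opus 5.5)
Your proposal is correct and follows the paper's proof: both group the ordered triples into unordered triples, use the symmetry of the summand in $(j,k)$ (the paper's ``two cyclic orientations'') to get a factor of $2$, and reduce to a three-term identity equal to $1$. The only difference is how that identity is checked: the paper does a direct calculation in $a=Q_{ij}$, $b=Q_{ik}$, while you substitute $x_i=e^{-\chi_i}$ and recognize the sum as the second divided difference of $x^2$, which gives a cleaner reason for the same step.
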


\begin{proof}

Fix three distinct indices $i,j,k$, and set $a:=Q_{ij}$ and $b:=Q_{ik}$, so $Q_{jk}=Q_{ik}/Q_{ij}=b/a$.
Then a direct calculation gives
\begin{equation}
    \frac{1}{(a-1)(b-1)}
    + \frac{1}{(a^{-1}-1)(ba^{-1}-1)}
    + \frac{1}{(b^{-1}-1)(ab^{-1}-1)}
    = 1 \,. 
\end{equation}
Equivalently,
\begin{equation}
    \frac{1}{(Q_{ij}-1)(Q_{ik}-1)}
    + \frac{1}{(Q_{ji}-1)(Q_{jk}-1)}
    + \frac{1}{(Q_{ki}-1)(Q_{kj}-1)}
    = 1 \,. 
\end{equation}
For each unordered triple $\{i,j,k\}$, the three terms above account for one cyclic orientation, and the opposite cyclic orientation contributes another copy of $1$. Hence each unordered triple contributes $2$, so
\begin{equation}
    \sum_{\substack{i,j,k\in[m]\\ i,j,k\ \mathrm{distinct}}}
    \frac{1}{(Q_{ij}-1)(Q_{ik}-1)}
    = 2 \binom{m}{3}
    = \frac{m(m-1)(m-2)}{3} \,. 
\end{equation}

\end{proof}

In fact this leads to the following result as well. 

\begin{lemma}[$\Delta_m(\chi)$ is Almost Harmonic]
\label{lm:vandermonde_almost_harmonic}
\begin{equation}
\label{eq:vandermonde-almost-harmonic}
    \Delta \Delta_m(\chi) = \frac{m(m-1)(2m-1)}{6} \Delta_m(\chi) \,. 
\end{equation}
\end{lemma}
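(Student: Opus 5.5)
Starting from the second-derivative formula \eqref{eq:modified-vandermonde-second-derivative}, I would sum over $i$ and divide by $\Delta_m(\chi)$. With $Q_{ij}=e^{\chi_i-\chi_j}$, this gives
\begin{equation*}
    \frac{\Delta\Delta_m(\chi)}{\Delta_m(\chi)}
    =
    \sum_i \sum_{j\neq i}\sum_{k\neq i}\frac{1}{(Q_{ij}-1)(Q_{ik}-1)}
    + \sum_i\sum_{j\neq i}\frac{1}{(Q_{ij}-1)(Q_{ji}-1)} .
\end{equation*}
I would split the double sum over $j,k$ into the diagonal part $j=k$ and the off-diagonal part $j\neq k$.

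The off-diagonal part, with $i,j,k$ distinct, is exactly the three-index sum of \cref{lm:three_index_symmetry}. It equals $m(m-1)(m-2)/3$.

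The remaining pieces combine for each ordered pair $i\neq j$. Writing $a=Q_{ij}$, so that $Q_{ji}=1/a$, the diagonal piece contributes $(a-1)^{-2}$ and the second sum contributes $\bigl((a-1)(a^{-1}-1)\bigr)^{-1}=-a/(a-1)^2$. Their sum is
\begin{equation*}
    \frac{1-a}{(a-1)^2} = -\frac{1}{a-1}.
\end{equation*}
The result is therefore $-\sum_{i\neq j}(Q_{ij}-1)^{-1}$.

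This sum is handled by the same pairing used in \cref{cor:det_dist}: the terms $(i,j)$ and $(j,i)$ sum to $-1$. Hence the sum equals $-\binom m2$, and the contribution is $+\binom m2=m(m-1)/2$.

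Adding the two contributions gives
\begin{equation*}
    \frac{m(m-1)(m-2)}{3}+\frac{m(m-1)}{2}
    =
    \frac{m(m-1)(2m-4+3)}{6}
    =
    \frac{m(m-1)(2m-1)}{6},
\end{equation*}
which is the claim.

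The only delicate point is bookkeeping. One must check that the diagonal $j=k$ terms and the $-e^{\chi_i-\chi_j}$ correction pair up correctly, and that no terms are double counted. The heavy lifting is already contained in \cref{lm:three_index_symmetry}.
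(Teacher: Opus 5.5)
Your proposal is correct and is essentially the paper's proof. You separate the distinct-index triple sum, evaluated by \cref{lm:three_index_symmetry}, and merge the $j=k$ diagonal terms with the $-Q_{ij}/(Q_{ij}-1)^2$ correction into $-\sum_{i\neq j}(Q_{ij}-1)^{-1}$, which you then evaluate by pairing $(i,j)$ with $(j,i)$; the only cosmetic difference is that you start from the expanded display \eqref{eq:modified-vandermonde-second-derivative} rather than from $\partial_{ii}\Delta_m/\Delta_m=\partial_i(\partial_i\log\Delta_m)+(\partial_i\log\Delta_m)^2$.
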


\begin{proof}

Write $a_{ij}:=(Q_{ij}-1)^{-1}$, so $\di_i\Delta_m/\Delta_m=\sum_{j\neq i}a_{ij}$.
Then
\begin{equation}
\begin{aligned}
    \frac{\di_{ii} \Delta_m}{\Delta_m}
    &=
        \di_i \left(
            \frac{\di_i \Delta_m}{\Delta_m}
        \right)
        + \left(
            \frac{\di_i \Delta_m}{\Delta_m}
        \right)^2
        \\
    &=
        \sum_{j\neq i}
        \left(
            - \frac{Q_{ij}}{(Q_{ij}-1)^2}
        \right)
        + \sum_{\substack{j\neq i \\ k\neq i}} a_{ij} a_{ik}
        \\
    &=
        \sum_{\substack{j\neq i \\ k\neq i,\,k\neq j}}
        \frac{1}{(Q_{ij}-1)(Q_{ik}-1)}
        - \sum_{j\neq i} \frac{1}{Q_{ij}-1} \,. 
\end{aligned}
\end{equation}
Summing over $i$ gives
\begin{equation}
    \frac{\Delta \Delta_m}{\Delta_m}
    = \sum_{\substack{i,j,k\in[m]\\ i,j,k\ \mathrm{distinct}}}
        \frac{1}{(Q_{ij}-1)(Q_{ik}-1)}
        - \sum_{i\neq j} \frac{1}{Q_{ij}-1} \,. 
\end{equation}
By \cref{lm:three_index_symmetry}, the first sum is $m(m-1)(m-2)/3$.
For the second sum, pairing $(i,j)$ with $(j,i)$ gives $-(Q_{ij}-1)^{-1}-(Q_{ji}-1)^{-1}=1$, and hence $-\sum_{i\neq j}(Q_{ij}-1)^{-1}=\binom{m}{2}=m(m-1)/2$.

Combining the two contributions,
\begin{equation}
    \frac{\Delta \Delta_m}{\Delta_m}
    = \frac{m(m-1)(m-2)}{3} + \frac{m(m-1)}{2}
    = \frac{m(m-1)(2m-1)}{6} \,, 
\end{equation}
which proves the claim. 

\end{proof}

The following standard type-$A$ conditioning argument specializes \citet[Section~5.2]{bianeBougerolOConnell2005littelmann}; see also \citet[Equation~(5) and Appendix~A]{takahashiKatori2012noncolliding} for the explicit determinant form.

\begin{proposition}[Conditioning Representation]
\label{prop:unit-noise-conditioning}
Let $\mathcal{W}_m:=\{\chi\in\mathbb{R}^m:\chi_1<\cdots<\chi_m\}$ and $\mu_i:=i-m-1$. For $\chi\in\mathcal{W}_m$, let $B_1,\ldots,B_m$ be independent standard Brownian motions and set
\begin{equation}
\label{eq:drifted-brownian-conditioning-process}
    X_i(t):=\chi_i+B_i(t)+\mu_i t \,,\quad i\in[m] \,;
    \qquad
    \tau:=\inf\left\{t\geq 0:X_i(t)=X_{i+1}(t)
    \text{ for some }i\in[m-1]\right\} \,.
\end{equation}
Conditioned on $\{\tau=\infty\}$, the process $X$ has generator $\frac12\Delta+\nabla\log h\cdot\nabla$, where $h(\chi):=\prod_{i<j}(e^{-\chi_i}-e^{-\chi_j})\exp(-\sum_{i=1}^m\chi_i)$. Hence it is the diffusion obtained from \cref{eq:log_geom_dyson} by replacing the Brownian coefficient $\sqrt{2}$ by $1$.
\end{proposition}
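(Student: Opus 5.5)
The plan is to treat this as a Doob $h$-transform of Brownian motion with constant drift $\mu=(\mu_1,\ldots,\mu_m)$ killed on exiting $\mathcal{W}_m$. The candidate harmonic function is $\Delta_m(\chi)e^{\langle \mu,\chi\rangle}$ corrected by a Girsanov factor. The killed process $X$ has generator $L=\frac12\Delta+\mu\cdot\nabla$. The conditioned process is the $\psi$-transform with $\psi(\chi)=\mathbb{P}_\chi(\tau=\infty)$, so its generator is $L+\nabla\log\psi\cdot\nabla$.

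First, I would show that $\psi$ is proportional to $h(\chi)e^{-\langle\mu,\chi\rangle}$. Since $\nabla\log h=\mu+\nabla\log\psi$, this gives exactly $\frac12\Delta+\nabla\log h\cdot\nabla$. The choice is consistent: $\nabla\log h$ has components $\partial_i\log\Delta_m-1$, and $\mu_i=i-m-1$ are the asymptotic drifts. Indeed, as the gaps go to infinity, $\partial_i\log\Delta_m\to\sum_{j\neq i}\lim(e^{\chi_i-\chi_j}-1)^{-1}$, which equals $-(m-i)$, since terms with $j>i$ tend to $-1$ and terms with $j<i$ tend to $0$.

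Second, I would verify that $\phi:=he^{-\langle\mu,\chi\rangle}$ is $L$-harmonic on $\mathcal{W}_m$, i.e.
\[
\tfrac12\Delta\phi+\mu\cdot\nabla\phi=0.
\]
Writing $\phi=\Delta_m e^{-\langle \mathbf 1+\mu,\chi\rangle}$, this reduces to $\Delta\Delta_m$, a cross term $\langle\nabla\Delta_m,\mathbf1+\mu\rangle$, and constants. A cleaner route is to expand $\Delta_m=\prod_{i<j}(e^{-\chi_i}-e^{-\chi_j})$ as a Vandermonde determinant $\det[e^{-(k-1)\chi_i}]$ up to sign. Then $\phi$ is a signed sum over permutations $\sigma$ of pure exponentials $e^{\langle a_\sigma,\chi\rangle}$, where $a_\sigma$ is a permutation of $\{-(0),\ldots,-(m-1)\}$ shifted by $-\mathbf1-\mu$. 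Each exponential is $L$-harmonic iff $\frac12|a|^2+\langle\mu,a\rangle=0$, i.e. $|a+\mu|^2=|\mu|^2$. Since $a_\sigma+\mu$ is a permutation of the coordinates of $-\mathbf1-\mu$ reversed, which has the same multiset as $\mu$ itself, the norm condition holds, and harmonicity follows. This is the \cref{lm:vandermonde_almost_harmonic}-type computation in disguise, and I will use the determinant form of \citet{takahashiKatori2012noncolliding}.

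Third, I would identify $\phi$ with $c\,\psi$. Here $\phi$ vanishes on $\partial\mathcal{W}_m$ because $\Delta_m$ does, and $\phi$ is positive inside. Along $X$, $\phi(X_t)$ is bounded: with the ordered drifts the gaps grow linearly, and $\phi$ tends to the constant from the identity permutation term as gaps diverge. By optional stopping, $\phi(\chi)=\mathbb{E}_\chi[\phi(X_t);t<\tau]$. Letting $t\to\infty$, on $\{\tau=\infty\}$ the gaps diverge a.s. by the strong law, so $\phi(X_t)\to c$, where $c$ is the leading coefficient. Hence $\phi=c\,\psi$ by bounded convergence.

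The main obstacle is this limiting step. It needs uniform boundedness of $\phi$ on $\mathcal{W}_m$, which requires that the identity-permutation exponent be the dominant (zero) one and all others be negative on $\mathcal{W}_m$; this is a rearrangement-inequality check. It also needs care that $\psi>0$. Finally, the $h$-transform generator formula gives the claim, and comparing with \cref{eq:log_geom_dyson} (drift $\nabla\log h$) with noise coefficient $1$ instead of $\sqrt2$ gives the last sentence.
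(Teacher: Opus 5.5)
Your proposal is correct and is essentially the paper's proof. The alternant form $h=\det[e^{\mu_j\chi_i}]$ makes $he^{-\langle\mu,\chi\rangle}$ harmonic for $\frac12\Delta+\mu\cdot\nabla$, because each exponent $a$ has $a+\mu$ equal to a permutation of $\mu$, i.e.\ of $(-1,\ldots,-m)$ (not of $-\mathbf{1}-\mu=(m-1,\ldots,0)$ as you wrote, though your conclusion about the multiset is right); optional stopping plus the strong law then identify it with $\mathbb{P}_\chi(\tau=\infty)$, and the Doob transform yields $\frac12\Delta+\nabla\log h\cdot\nabla$. The step you flag as the main obstacle is immediate in the paper from the factorization $he^{-\langle\mu,\chi\rangle}=\prod_{i<j}\bigl(1-e^{-(\chi_j-\chi_i)}\bigr)$, obtained by pulling $e^{-\chi_i}$ out of each factor, which gives at once $0<\phi<1$ on $\mathcal{W}_m$, vanishing on the boundary, positivity of the survival probability, and $c=1$; your rearrangement-inequality route (bound $|\phi|\le m!$, plus a majorization check that every non-identity exponent tends to $-\infty$ as all gaps diverge) also works but is more roundabout.
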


\begin{proof}
Since $\mu_j=j-m-1$, the Vandermonde determinant gives the alternant representation
\begin{equation}
\label{eq:conditioning-alternant-survival}
\begin{aligned}
    h(\chi)
    &= \exp\left(-\sum_{i=1}^m\chi_i\right)\Delta_m(\chi)
    = \det\left[e^{\mu_j\chi_i}\right]_{i,j=1}^m \,,
    \\
    H_\mu(\chi)
    &:=e^{-\langle\mu,\chi\rangle}h(\chi)
    =\prod_{i<j}\left(1-e^{-(\chi_j-\chi_i)}\right) \,.
\end{aligned}
\end{equation}
Thus $0<H_\mu<1$ on $\mathcal{W}_m$, and $H_\mu$ extends continuously as zero to its collision boundary.

Every term in the determinant in \cref{eq:conditioning-alternant-survival} is an exponential whose exponent vector is a permutation of $\mu$. Consequently, with $\mathcal{L}_\mu:=\frac12\Delta+\mu\cdot\nabla$,
\begin{equation}
\label{eq:conditioning-survival-harmonicity}
\begin{aligned}
    \Delta h
    &= \|\mu\|^2 h \,,
    &
    \|\mu\|^2
    &= \sum_{k=1}^m k^2
    = \frac{m(m+1)(2m+1)}{6} \,,
    \\
    \mathcal{L}_\mu H_\mu
    &= e^{-\langle\mu,\chi\rangle}
    \left(\frac12\Delta h-\frac12\|\mu\|^2h\right)
    =0 \,.
\end{aligned}
\end{equation}
It follows from It\^{o}'s formula that $H_\mu(X(t\wedge\tau))$ is a bounded martingale. Since $H_\mu$ vanishes on the collision boundary,
\begin{equation}
\label{eq:conditioning-survival-martingale}
    H_\mu(\chi)
    = \mathbb{E}_\chi^\mu\left[H_\mu(X(t\wedge\tau))\right]
    = \mathbb{E}_\chi^\mu\left[
        \mathds{1}_{\{\tau>t\}}H_\mu(X(t))
      \right] \,.
\end{equation}
On $\{\tau=\infty\}$, the strong law for Brownian motion gives $(X_j(t)-X_i(t))/t\to\mu_j-\mu_i=j-i>0$, so \cref{eq:conditioning-alternant-survival} implies $H_\mu(X(t))\to 1$. On $\{\tau<\infty\}$, the indicator in \cref{eq:conditioning-survival-martingale} eventually vanishes. Dominated convergence therefore yields $H_\mu(\chi)=\mathbb{P}_\chi^\mu(\tau=\infty)$, which is positive by the product formula in \cref{eq:conditioning-alternant-survival}.

Let $P_t^{\mathcal{W}_m}$ denote the semigroup of centered Brownian motion killed on leaving $\mathcal{W}_m$. The Markov property and the Cameron--Martin formula give the conditional semigroup and its generator as
\begin{equation}
\label{eq:conditioning-doob-semigroup-generator}
\begin{aligned}
    Q_t f(\chi)
    &:= \mathbb{E}_\chi^\mu\left[f(X(t))\mid\tau=\infty\right]
    = \frac{1}{H_\mu(\chi)}
      \mathbb{E}_\chi^\mu\left[
        \mathds{1}_{\{\tau>t\}}H_\mu(X(t))f(X(t))
      \right]
    \\
    &= e^{-\|\mu\|^2t/2}
       \frac{P_t^{\mathcal{W}_m}(hf)(\chi)}{h(\chi)} \,,
    \\
    \mathcal{L}_\mu^{H_\mu}f
    &:= H_\mu^{-1}\mathcal{L}_\mu(H_\mu f)
    = \frac12\Delta f+\left(\mu+\nabla\log H_\mu\right)\cdot\nabla f
    = \frac12\Delta f+\nabla\log h\cdot\nabla f \,.
\end{aligned}
\end{equation}
The logarithmic derivative in \cref{eq:modified-vandermonde-log-derivative} now identifies the resulting diffusion with the coefficient-one version of \cref{eq:log_geom_dyson}. The factor $e^{-\|\mu\|^2t/2}$ in \cref{eq:conditioning-doob-semigroup-generator} is the scalar eigenvalue normalization; the underlying killed process is absorbed only at the first collision.
\end{proof}

\section{Collision Avoidance and Instantaneous Simplicity}
\label{sec:appendix-collision}

\begin{lemma}[Matrix Realization of the Covariance SDE]
\label{lem:matrix-realization-covariance-sde}
Let $W_t$ be an $m\times m$ Brownian matrix, and let $H_t$ solve
\begin{equation}
    dH_t
    = \frac{1}{\sqrt{2}} \, dW_t \, H_t
    - \frac{m}{4} H_t \, dt \,,
    \qquad
    H_0 = \Phi_0^{1/2} \,.
\end{equation}
Set $\Phi_t := H_t^\top H_t$. Then $\Phi_t$ solves the covariance SDE of \cref{thm:lin_cov_sde}. Moreover, if $\Phi_0 \in \SPD(m)$, then $\Phi_t \in \SPD(m)$ for all $t \geq 0$ almost surely.
\end{lemma}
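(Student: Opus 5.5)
The plan is to verify the claim by a direct It\^o computation for $\Phi_t=H_t^\top H_t$. This should show that the drift vanishes and that the martingale part has the correct covariance. I will then recast the martingale part in the form $\Phi_t^{1/2}\,d\widetilde S_t\,\Phi_t^{1/2}$ with $\widetilde S$ a symmetric Brownian motion, and finally check invertibility of $H_t$ separately.

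\textbf{Step 1: It\^o product rule.} Write
\[
d\Phi_t = dH_t^\top H_t + H_t^\top dH_t + d\langle H^\top,H\rangle_t .
\]
The two first-order terms give
\[
\tfrac{1}{\sqrt2}H_t^\top\bigl(dW_t+dW_t^\top\bigr)H_t-\tfrac m2\,\Phi_t\,dt = H_t^\top dS_t H_t-\tfrac m2\Phi_t\,dt,
\]
where $S_t=(W_t+W_t^\top)/\sqrt2$ is the symmetric Brownian motion of \cref{eq:symmetric_brownian}. For the correction term, $(dH^\top dH)_{ab}=\sum_k dH_{ka}dH_{kb}=\tfrac12\sum_{k,p,q}H_{pa}H_{qb}\,d\langle W_{kp},W_{kq}\rangle=\tfrac m2\Phi_{ab}\,dt$. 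This cancels the drift exactly, so $d\Phi_t=H_t^\top dS_tH_t$.

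\textbf{Step 2: identification with the covariance SDE.} While $H_t$ is invertible, take the polar decomposition $H_t=O_t\Phi_t^{1/2}$ with $O_t$ orthogonal and continuous adapted, and set $d\widetilde S_t:=O_t^\top dS_t O_t$. Then $d\Phi_t=\Phi_t^{1/2}d\widetilde S_t\Phi_t^{1/2}$. The covariation of $\widetilde S$ is $(\delta\delta+\delta\delta)dt$ because the tensor $\delta_{\alpha\gamma}\delta_{\beta\delta}+\delta_{\alpha\delta}\delta_{\beta\gamma}$ is invariant under orthogonal conjugation. Hence, by L\'evy's characterization, $\widetilde S$ is a symmetric Brownian motion. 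As a cross-check, the same index computation as in the proof of \cref{thm:lin_cov_sde} gives $d\langle\Phi^{\alpha\beta},\Phi^{\gamma\delta}\rangle=(\Phi^{\alpha\gamma}\Phi^{\beta\delta}+\Phi^{\alpha\delta}\Phi^{\beta\gamma})dt$, which is the diffusion matrix $\Sigma$ of \eqref{eq:cov_sde}.

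Since $\Phi\mapsto\Phi^{1/2}$ is smooth on $\SPD(m)$, the coefficients are locally Lipschitz there. Pathwise uniqueness up to exit from $\SPD(m)$ then gives uniqueness in law, so $\Phi_t$ is the covariance diffusion, provided it never leaves $\SPD(m)$.

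\textbf{Step 3: positivity.} This is the main point to secure: it is needed both for the polar decomposition and for the localization in Step 2. The plan is to show that $H_t$ stays invertible by applying Jacobi's formula and It\^o to $\det H_t$. Since $H$ solves a linear right-multiplicative SDE, $d\det H_t=\det H_t\,\bigl(\tfrac1{\sqrt2}\Tr dW_t + c_m\,dt\bigr)$ for an explicit constant $c_m$; the second-order terms reduce to multiples of $\det H_t$ by the same contraction as in Step 1. Therefore $\det H_t=\det H_0\exp(\tfrac1{\sqrt2}\Tr W_t+c'_m t)\neq0$ for all $t$.

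An alternative route is to exhibit the inverse directly: $K_t$ solving $dK_t=-\tfrac1{\sqrt2}K_t\,dW_t+(\ldots)K_t\,dt$ satisfies $d(K_tH_t)=0$. Either way $H_t\in\GL(m)$, so $\Phi_t=H_t^\top H_t\in\SPD(m)$ for all $t\ge0$ almost surely. The only care needed is the constant in the It\^o correction for $\det$, which does not affect nonvanishing.
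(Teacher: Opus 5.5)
Your proof is correct and follows the paper's route: the It\^o product rule, where the $\tfrac m2\Phi_t\,dt$ correction cancels the drift; the polar decomposition $H_t=O_t\Phi_t^{1/2}$ to rewrite the noise as $\Phi_t^{1/2}\,d\widetilde S_t\,\Phi_t^{1/2}$; and an It\^o computation for the determinant to keep $H_t$ invertible. Two of your steps are small refinements of the paper's: L\'evy's characterization for $\widetilde S_t=\int_0^t O_s^\top dS_s\,O_s$ makes pathwise what the paper states only in law via orthogonal invariance, and your determinant constant works out to $c_m=-m^2/4$ (the second-order terms cancel), so $\det H_t=\det H_0\exp\bigl(\tfrac{1}{\sqrt2}\Tr W_t-\tfrac{m(m+1)}{4}t\bigr)$, matching the paper's $\log\det$ computation.
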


\begin{proof}

Apply It\^o's product rule to $\Phi_t = H_t^\top H_t$:
\begin{equation}
    d\Phi_t = dH_t^\top H_t + H_t^\top dH_t + dH_t^\top dH_t \,.
\end{equation}
Substituting the defining SDE for $H_t$ gives
\begin{equation}
\begin{aligned}
    d\Phi_t
    &= \frac{1}{\sqrt{2}} H_t^\top dW_t^\top H_t
    + \frac{1}{\sqrt{2}} H_t^\top dW_t H_t
    - \frac{m}{2} \Phi_t \, dt
    + \frac{1}{2} H_t^\top dW_t^\top dW_t H_t \,.
\end{aligned}
\end{equation}
Since $dW_t^\top dW_t = m I \, dt$, the drift cancels and
\begin{equation}
    d\Phi_t
    = \frac{1}{\sqrt{2}} H_t^\top dW_t^\top H_t
    + \frac{1}{\sqrt{2}} H_t^\top dW_t H_t \,.
\end{equation}
Writing $H_t = O_t \Phi_t^{1/2}$ with $O_t \in O(m)$ and using the orthogonal invariance of the Brownian matrix, this has the same law as
\begin{equation}
    d\Phi_t
    = \frac{1}{\sqrt{2}} \, \Phi_t^{1/2} d\widetilde W_t \Phi_t^{1/2}
    + \frac{1}{\sqrt{2}} \, \Phi_t^{1/2} d\widetilde W_t^\top \Phi_t^{1/2} \,,
\end{equation}
whose covariance is precisely
\begin{equation}
    \mathbb{E}\bigl[
        d\Phi_{\alpha\beta} d\Phi_{\gamma\delta}
        \,\big|\,
        \mathcal{F}_t
    \bigr]
    =
    \bigl(
        \Phi_{\alpha\gamma}\Phi_{\beta\delta}
        + \Phi_{\alpha\delta}\Phi_{\beta\gamma}
    \bigr)
    \, dt \,.
\end{equation}
Equivalently, with
$S_t=(\widetilde W_t+\widetilde W_t^\top)/\sqrt{2}$, the preceding display is the matrix SDE \cref{eq:cov_sde_matrix}. The displayed covariance is exactly the upper-triangular coordinate form \cref{eq:cov_sde}.

It remains to prove positive definiteness. Since $\Phi_t = H_t^\top H_t$, it suffices to show that $H_t$ remains invertible. Up to the first time at which $\det H_t$ hits $0$, right multiplication by $H_t^{-1}$ gives
\begin{equation}
    dH_t H_t^{-1}
    = \frac{1}{\sqrt{2}} \, dW_t - \frac{m}{4} I \, dt \,,
\end{equation}
Using It\^o's formula for $\log \det H_t$, equivalently in right-logarithmic form, yields
\begin{equation}
\begin{aligned}
    d \log \det H_t
    &= \operatorname{Tr}( dH_t H_t^{-1} )
    - \frac{1}{2} \operatorname{Tr}( dH_t H_t^{-1} dH_t H_t^{-1} ) \\
    &= \frac{1}{\sqrt{2}} \operatorname{Tr}(dW_t)
    - \frac{m^2}{4} dt
    - \frac{1}{4} \operatorname{Tr}(dW_t dW_t) \\
    &= \frac{1}{\sqrt{2}} \operatorname{Tr}(dW_t)
    - \frac{m(m+1)}{4} dt \,.
\end{aligned}
\end{equation}
This is a real-valued continuous semimartingale with finite coefficients on every finite time interval, so $\det H_t$ cannot hit $0$ in finite time once $\det H_0 > 0$. Hence $H_t$ remains invertible and therefore $\Phi_t \in \SPD(m)$ for all $t \geq 0$ almost surely.

\end{proof}

\begin{lemma}[Instantaneous Simplicity and Absence of Collisions]
\label{lem:instantaneous-simplicity-no-collision}
Let $\Phi_t$ solve the covariance SDE of \cref{thm:lin_cov_sde} with $\Phi_0 \in \SPD(m)$, and let $\lambda_1(t) \leq \cdots \leq \lambda_m(t)$ be the ordered eigenvalues of $\Phi_t$.
If $\Phi_0$ has simple spectrum, then the eigenvalues satisfy \eqref{eq:gdbm_sde} for all $t\geq 0$, and their collision time is almost surely infinite.
For arbitrary $\Phi_0\in\SPD(m)$ and every fixed $\varepsilon>0$, the spectrum of $\Phi_\varepsilon$ is simple almost surely; consequently, on every interval $[\varepsilon,\infty)$ the eigenvalues satisfy \eqref{eq:gdbm_sde} and never collide.
\end{lemma}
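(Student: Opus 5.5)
We split the argument into two parts: the simple-spectrum case (no collisions), and instantaneous simplicity for an arbitrary initial condition.

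\textbf{Simple initial spectrum.} We use the realization of \cref{lem:matrix-realization-covariance-sde}, so $\Phi_t\in\SPD(m)$ for all $t$ and $\lambda_{i}(t)>0$.
\begin{itemize}
\item Up to the first collision time $\zeta$, the ordered eigenvalues are smooth functions of $\Phi_t$ on the open set of simple-spectrum matrices. The It\^o computation in the proof of \cref{thm:geom_dyson} therefore applies verbatim on $[0,\zeta)$ and gives \eqref{eq:gdbm_sde}.
\item To show $\zeta=\infty$, we pass to the log coordinates $\chi_i=\log\lambda_i$, which satisfy \eqref{eq:log_geom_dyson}. We take as Lyapunov function $V(\chi)=-\log\Delta_m(\chi)$, with $\Delta_m$ the modified Vandermonde determinant of \cref{sec:appendix-vandermonde}. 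Note that $V\to+\infty$ exactly at a collision.
\item By \eqref{eq:modified-vandermonde-log-derivative}, the interaction drift is $\nabla\log\Delta_m$. It\^o's formula with generator $L=\Delta+(\nabla\log\Delta_m-\mathbf 1)\cdot\nabla$ (diffusion coefficient $\sqrt2$ gives the full Laplacian) yields
\[
LV=-\frac{\Delta\Delta_m}{\Delta_m}+|\nabla\log\Delta_m|^2-|\nabla\log\Delta_m|^2+\mathbf 1\cdot\nabla\log\Delta_m .
\]
The two squared-gradient terms cancel. By \cref{lm:vandermonde_almost_harmonic}, $\Delta\Delta_m/\Delta_m$ is the constant $m(m-1)(2m-1)/6$. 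The last term is $\sum_{i\neq j}(e^{\chi_i-\chi_j}-1)^{-1}=-\binom m2$ by the pairing identity used in \cref{cor:det_dist}. Hence $LV$ is a bounded constant.
\item A constant $LV$ is not yet enough, since $V$ is unbounded below far from collisions. We bound that direction separately: $\log\Delta_m\le \sum_{i<j}\log(e^{-\chi_i}+e^{-\chi_j})$, and $\chi$ is controlled on compact time intervals because each $\chi_i$ lies between $\min_i\chi_i$ and $\max_i\chi_i$. These extremes are bounded via $\log\det$ (\cref{cor:det_dist}) together with $\log\Tr\Phi_t$, which is a geometric-Brownian-type semimartingale with bounded coefficients. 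Localizing at $\tau_R=\inf\{t:V\ge R \text{ or } |\chi|\ge R\}$, the optional stopping bound $\mathbb E V(\chi_{t\wedge\tau_R})\le V(\chi_0)+Ct$ gives $\mathbb P(\tau_R\le t,\,V\ge R)\to0$ as $R\to\infty$. Thus $\zeta>t$ for every $t$, almost surely.
\end{itemize}

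\textbf{Arbitrary $\Phi_0\in\SPD(m)$.} It suffices to show that for fixed $\varepsilon>0$ the law of $\Phi_\varepsilon$ does not charge the discriminant variety $\{\Phi:\operatorname{disc}(\Phi)=0\}$. This set is a proper real-algebraic hypersurface of $\Sym(m)$, hence Lebesgue-null. So it is enough to show that $\Phi_\varepsilon$ has a density with respect to Lebesgue measure on $\Sym(m)$.

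By \cref{rm:init_identity}, $\Phi_\varepsilon\overset d=Q\Phi_\varepsilon^{(I_m)}Q^\top$ with $Q=\Phi_0^{1/2}$ invertible, and this linear bijection preserves absolute continuity. We therefore reduce to $\Phi_0=I_m$.

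The diffusion matrix $\Sigma(\Phi)$ is nondegenerate on $\SPD(m)$, since by \cref{prop:sigma_affine_invar} it is the inverse of a Riemannian metric. It is also smooth, so the generator is uniformly elliptic on compacts of $\SPD(m)$, and the process stays in $\SPD(m)$ by \cref{lem:matrix-realization-covariance-sde}. Standard results for elliptic diffusions (localized H\"ormander/ellipticity on an exhausting sequence of compact subsets, combined with non-explosion) then give a smooth transition density for $\Phi_\varepsilon$.

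Hence $\Phi_\varepsilon$ has simple spectrum almost surely. Applying the first part by the Markov property from time $\varepsilon$ gives \eqref{eq:gdbm_sde} and absence of collisions on $[\varepsilon,\infty)$.

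\textbf{Main obstacle.} The step needing most care is the localization in the Lyapunov argument, because $V$ is not bounded below. I would first try a simpler alternative: since $\log\Delta_m$ is bounded above on $\{|\chi|\le R\}$, and $|\chi|$ has no explosion by the determinant and trace controls above, $V$ is bounded below on each such set. The supermartingale-type bound can then be applied on the event $\{\tau_R \text{ not triggered by } |\chi|\ge R\}$, whose probability tends to one.
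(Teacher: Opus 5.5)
Your proposal is correct. For the simple-spectrum case it takes a genuinely different route from the paper.

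\textbf{The paper's route.} The paper uses no Lyapunov function. It writes the covariance SDE in the form $g(\Phi)\,dW\,h(\Phi)+h(\Phi)\,dW^\top g(\Phi)$ with $g(x)=h(x)=2^{-1/4}\sqrt{x}$. It checks the coefficient hypotheses ($g^2,h^2$ Lipschitz, $g^2h^2$ convex). It then cites Theorems~3 and~5 of \citet{graczyk2013multidimensional} for both the eigenvalue SDE up to collision and the almost sure absence of collisions.

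\textbf{Your route.} You instead give a self-contained McKean-type argument on the log-eigenvalues, built from the paper's own appendix identities. Your generator computation is right; in fact $L(-\log\Delta_m)=-\tfrac{m(m-1)(2m-1)}{6}-\binom{m}{2}=-\tfrac{m(m-1)(m+1)}{3}$, so $-\log\Delta_m$ has constant negative drift.

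\textbf{What each buys.} Your route gives transparency and independence from external hypotheses. It shows that the noise coefficient $\sqrt2$ sits exactly at the $\beta=1$ threshold where the squared-gradient terms cancel. So the drift $\nabla\log h$ already prevents collisions, even though it is not the $2\nabla\log h$ of the Doob transform discussed in \cref{subsec:log_dyson_calc}. The paper's route buys brevity and avoids localization bookkeeping.

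\textbf{Arbitrary $\Phi_0$.} Here you follow essentially the paper's argument: ellipticity of $\Sigma$ on $\SPD(m)$, absolute continuity of $\Phi_\varepsilon$, a Lebesgue-null discriminant set, then the Markov property at time $\varepsilon$. Your reduction to $\Phi_0=I_m$ is harmless but unnecessary.

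\textbf{Localization detail.} With a single threshold $R$ for both $V$ and $|\chi|$, the lower bound $V\ge-\binom{m}{2}(R+\log 2)$ grows linearly in $R$. So $\mathbb E V(\chi_{t\wedge\tau_R})\le V(\chi_0)$ alone does not force $\mathbb P(\tau_R\le t,\ V=R)\to0$. You have two fixes:
\begin{enumerate}
\item Decouple the thresholds: stop at $V\ge K$ or $|\chi|\ge R$, let $K\to\infty$ with $R$ fixed, then let $R\to\infty$.
\item Use integrability of $\sup_{s\le t}|\chi_s|$.
\end{enumerate}
In the decoupled version, \cref{lem:matrix-realization-covariance-sde} already gives a continuous $\SPD(m)$-valued path. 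Hence $\sup_{s\le t}|\chi_s|<\infty$ almost surely, and the $\log\det$/$\log\Tr$ control is not needed. Note also that $V$ is not a pure collision indicator: it is unbounded below and also tends to $+\infty$ when two log-eigenvalues escape to $+\infty$. That is why the $|\chi|$-localization is needed.

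\textbf{Circularity detail.} If you do use $\log\det$, do not cite \cref{cor:det_dist}. Its proof sums the eigenvalue SDE over $[0,t]$, so it relies on the lemma being proved. Derive the same law directly by It\^o's formula for $\log\det\Phi_t$ from the matrix SDE, or from the $\log\det H_t$ computation in \cref{lem:matrix-realization-covariance-sde}.
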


\begin{proof}

First suppose that $\Phi_0$ has simple spectrum. By \cref{lem:matrix-realization-covariance-sde}, the covariance SDE can be written in the matrix form
\begin{equation}
    d\Phi_t
    = g(\Phi_t) \, dW_t \, h(\Phi_t)
    + h(\Phi_t) \, dW_t^\top \, g(\Phi_t) \,,
\end{equation}
Here $g(x)=h(x)=2^{-1/4}\sqrt{x}$ and $b(x)\equiv 0$.
For these coefficients, $2g(x)h(x)=\sqrt{2}\,x$ and $G(x,y):=g(x)^2h(y)^2+g(y)^2h(x)^2=xy$.
Moreover, $g(x)^2=h(x)^2=x/\sqrt{2}$ is Lipschitz on $\mathbb{R}_{\geq 0}$, and $g(x)^2h(x)^2=x^2/2$ is convex.
Therefore Theorem 3 and Theorem 5 of \citet{graczyk2013multidimensional} apply: up to the first collision time the eigenvalues satisfy \eqref{eq:gdbm_sde}, and the collision time is almost surely infinite. This proves the simple-spectrum case.

For the general initial condition, fix $\varepsilon > 0$. By \cref{lem:matrix-realization-covariance-sde}, the process stays in $\SPD(m)$ for all times. On $\SPD(m)$, the covariance matrix $\Sigma(\Phi)$ is strictly positive definite. Indeed, for every symmetric matrix $A$,
\begin{equation}
    \sum_{\alpha,\beta,\gamma,\delta}
    A_{\alpha\beta} \, \Sigma(\Phi)_{\alpha\beta,\gamma\delta} \, A_{\gamma\delta}
    = 2 \, \Tr(A\Phi A\Phi)
    = 2 \, \|\Phi^{1/2} A \Phi^{1/2}\|_F^2 > 0
\end{equation}
whenever $A \neq 0$. Hence the vectorized diffusion is locally elliptic on $\SPD(m)$, and standard elliptic diffusion theory yields that $\Phi_\varepsilon$ has a density with respect to Lebesgue measure on $\Sym(m)$ for every fixed $\varepsilon > 0$; see, for example, \citet{Bally2006}.

The subset of symmetric matrices with repeated eigenvalues is the zero set of the discriminant polynomial of the characteristic polynomial, hence is Lebesgue-null. Therefore $\Phi_\varepsilon$ has simple spectrum almost surely.

Now condition on $\mathcal{F}_\varepsilon$. On the event that $\Phi_\varepsilon$ has simple spectrum, the simple-spectrum case applies to the time-shifted process by the strong Markov property, so no collision can occur after time $\varepsilon$. Since this holds for every rational $\varepsilon > 0$, collisions cannot occur at any positive time. The SDE therefore holds on every interval $[\varepsilon,\infty)$.

\end{proof}

\section{Information-Geometric Interpretation}
\label{sec:appendix-dual-brownian}

This appendix records the coordinate calculation underlying \cref{rm:dual_brownian}.
While $\Phi_t$ is not the standard Brownian motion on the manifold $(\SPD(m),\Sigma^{-1})$, it can be viewed as a Brownian motion with respect to a different affine connection.

In information geometry \citep{amari2016information}, we can consider the geometric structure arising from a family of distributions and a divergence.
In the present case, the relevant family is the zero-mean Gaussian distributions on $\mathbb{R}^m$ with the KL-divergence, which naturally induces the affine-invariant metric \cref{prop:sigma_affine_invar}.
To avoid notational ambiguity, write $\nabla^{\mathrm{LC}}$ for the Levi--Civita connection, $\nabla^{(+1)}$ for the primal flat connection, and $\nabla^{(-1)}$ for the dual connection.
Then the $\alpha$-connections are given by
\begin{equation}
    \nabla^{(\alpha)}
    := \frac{1+\alpha}{2}\,\nabla^{(+1)} + \frac{1-\alpha}{2}\,\nabla^{(-1)} \,,
    \qquad
    \nabla^{(0)} = \nabla^{\mathrm{LC}} \,.
\end{equation}

Let $x = (x^1,\ldots,x^N)$, with $N = m(m+1)/2$, denote affine coordinates on $\SPD(m)$ induced by the upper triangular entries.
If
\begin{equation}
    \omega^{(\alpha)} = p^{(\alpha)}(x) \, dx^1 \cdots dx^N
\end{equation}
is the $\alpha$-volume form, then using $\omega^{(\alpha)} = f^{(\alpha)} dv$ together with
\begin{equation}
    f^{(\alpha)} = C (\det g)^{-\alpha/2} \,,
    \qquad
    dv = \sqrt{\det g} \, dx^1 \cdots dx^N \,,
\end{equation}
we obtain
\begin{equation}
    p^{(\alpha)}(x) = C (\det g)^{(1-\alpha)/2} \,.
\end{equation}
Hence the divergence of a vector field $X = X^i \di_i$ with respect to $\omega^{(\alpha)}$ is
\begin{equation}
    \operatorname{div}^{(\alpha)} X
    = \frac{1}{p^{(\alpha)}} \di_i \bigl( p^{(\alpha)} X^i \bigr)
    = \di_i X^i + \frac{1-\alpha}{2} X(\log \det g) \,.
\end{equation}

For a smooth function $\varphi$, define the $\alpha$-Laplacian by
\begin{equation}
    \Delta^{(\alpha)} \varphi := \operatorname{div}^{(\alpha)}(\grad \varphi) \,,
    \qquad
    (\grad \varphi)^i = g^{ij} \di_j \varphi \,.
\end{equation}
Therefore
\begin{equation}
    \Delta^{(\alpha)} \varphi
    = \di_i \bigl( g^{ij} \di_j \varphi \bigr)
    + \frac{1-\alpha}{2} g^{ij} \bigl( \di_i \log \det g \bigr) \di_j \varphi \,.
\end{equation}
Because the affine-invariant metric is Hessian in these affine coordinates--more precisely, for the potential $\psi(\Phi):=-\frac{1}{2}\log\det\Phi$, one has $g_{ij}=\partial_i\partial_j\psi$--the cofactor identity
\begin{equation}
    \di_i \bigl( (\det g) g^{ij} \bigr) = 0 \,.
\end{equation}
follows from equality of mixed partial derivatives.

Thus the two cases relevant here simplify to
\begin{equation}
\begin{aligned}
    \Delta^{(-1)} \varphi
    &= g^{ij} \di_i \di_j \varphi \,, \\
    \Delta^{(+1)} \varphi
    &= g^{ij} \di_i \di_j \varphi
        - \langle \grad \varphi, \grad \log \det g \rangle \,.
\end{aligned}
\end{equation}
Let $L$ denote the It\^o generator of the covariance diffusion in \cref{thm:lin_cov_sde}.
In the upper-triangular coordinate form, using \cref{prop:sigma_affine_invar}, we can write
\begin{equation}
\begin{aligned}
    L \varphi
    &= \frac{1}{2} \sum_{a,b} \Sigma(\Phi)_{ab} \, \di_{ab} \varphi
    &= \frac{1}{2} g^{ij} \di_i\di_j \varphi
    = \frac{1}{2} \Delta^{(-1)} \varphi \,.
\end{aligned}
\end{equation}
Thus the It\^o generator of $\Phi_t$ is $L = \frac{1}{2} \Delta^{(-1)}$, which corresponds to the dual connection Laplacian.
Accordingly, $\Phi_t$ is Brownian motion on $\SPD(m)$ associated with the dual connection $\nabla^{(-1)}$ under the standard probabilistic convention that Brownian motion has generator $\frac{1}{2}\Delta^{(-1)}$.

\section{Mean-Field Limit for the Geometric Dyson Brownian Motion}
\label{sec:appendix-meanfield-gdbm}

This appendix collects the compactness, a priori estimates, and real-axis uniqueness arguments used in the proof of \cref{thm:burgers-fixed-point}. The formal characteristic and free-probability derivations are given in the proof of \cref{cor:fixed-point-free-flow}; the role of this appendix is to justify the limiting weak equation, select the unique physical branch, and record the inverse-branch identity used in the $S$-transform comparison. Throughout this appendix, we work on the fixed terminal spectral-time interval $[0,\bar\tau]$, assume $\rho_0^{(m)}\xrightarrow{d}\rho_0$ and $\sup_m\langle x,\rho_0^{(m)}\rangle<\infty$, and let
\begin{equation}
    \rho_\tau^{(m)}
    :=
    \frac{1}{m}\sum_{i=1}^m\delta_{\mu_{i,\tau}},
    \qquad
    0\le \tau\le \bar\tau,
\end{equation}
where $(\mu_{i,\tau})_{i=1}^m$ is the time-changed geometric Dyson Brownian motion
\begin{equation}
    d\mu_{i,\tau}
    =
    \sqrt{\frac{2}{m}}\,\mu_{i,\tau}\,dB_{i,\tau}
    +
    \frac{1}{m}\sum_{j\ne i}
        \frac{\mu_{i,\tau}\mu_{j,\tau}}
             {\mu_{i,\tau}-\mu_{j,\tau}}\,d\tau \, .
\end{equation}

When the initial condition has repeated eigenvalues, the identities below are first justified on each interval $[\varepsilon,\bar\tau]$ by \cref{lem:instantaneous-simplicity-no-collision}, and then one lets $\varepsilon\downarrow 0$. We suppress this approximation in the notation.

We use the test class $\mathcal F_b$ and the kernel $H_f$ from \eqref{eq:meanfield-test-class} and \eqref{eq:meanfield-Hf}. For the kernel estimates below, it is convenient also to use the corresponding local class
\begin{equation}
\label{eq:meanfield-Floc-appendix}
    \mathcal F_{\mathrm{loc}}
    :=
    \left\{
        f\in C^2(\mathbb{R}_{\geq 0}) :
        \|xf'(x)\|_\infty+\|x^2f''(x)\|_\infty<\infty
    \right\},
\end{equation}
with $H_f$ defined by the same formula as in \eqref{eq:meanfield-Hf}.

\begin{lemma}[Compactness and Identification of Subsequential Limits]
\label{lem:meanfield-technical}
The sequence $(\rho_\tau^{(m)})_{\tau\in[0,\bar\tau]}$ is tight in $C([0,\bar\tau],\mathcal{P}(\mathbb{R}_{\geq 0}))$. Every subsequential limit $(\rho_\tau)_{\tau\in[0,\bar\tau]}$ is a continuous path and, for every fixed $f\in\mathcal F_b$, satisfies the weak equation \eqref{eq:limit-weak} almost surely for all $\tau\in[0,\bar\tau]$.
\end{lemma}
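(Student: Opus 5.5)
The plan follows the standard scheme of \citet[Proposition 4.3.10]{anderson2010introduction}: compact containment via a moment bound, equicontinuity of $\tau\mapsto\langle f,\rho^{(m)}_\tau\rangle$ for a countable convergence-determining family, and passage to the limit in \eqref{eq:finite-m-weak-main}.

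\textbf{Kernel estimates.} First I will show that for $f\in\mathcal F_{\mathrm{loc}}$ the kernel $H_f$ is bounded and continuous on $(\mathbb R_{\geq0})^2$, with
\begin{equation}
    \|H_f\|_\infty\le C\bigl(\|xf'\|_\infty+\|x^2f''\|_\infty\bigr).
\end{equation}
For $x\neq y$, write $f'(x)-f'(y)=\int_y^x f''(s)\,ds$ and substitute $s=e^u$ to get $s f''(s)\,ds=s^2f''(s)\,du$. This gives
\begin{equation}
    |f'(x)-f'(y)|\le \|x^2f''\|_\infty\,|\log x-\log y|\cdot\max(1/x,1/y).
\end{equation}
Combining this with $xy\,|\log x-\log y|\cdot \max(1/x,1/y)/|x-y|\le 1$, which is the elementary log-mean inequality, yields the bound. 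Continuity on the diagonal is a Taylor expansion. At the boundary, where $x$ or $y$ equals $0$, the factor $xy$ combined with the bound on $xf'$ gives continuity.

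\textbf{First moment and compact containment.} Take $f(x)=x$. Then $H_f\equiv 0$ and $f''=0$, so $\langle x,\rho^{(m)}_\tau\rangle$ is a nonnegative local martingale, hence a supermartingale. Its expectation is therefore bounded by $\sup_m\langle x,\rho_0^{(m)}\rangle$. For the time-uniform version I will use Doob's maximal inequality for nonnegative supermartingales, $\mathbb P(\sup_\tau\langle x,\rho^{(m)}_\tau\rangle>K)\le C/K$. This gives compact containment in $\mathcal P(\mathbb R_{\geq0})$ uniformly in $\tau$, since $\{\rho:\langle x,\rho\rangle\le K\}$ is tight.

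\textbf{Equicontinuity.} Fix $f\in\mathcal F_b$. The drift terms in \eqref{eq:finite-m-weak-main} are Lipschitz in $\tau$ with constant $\tfrac12\|H_f\|_\infty+\tfrac1{2m}\|x^2f''\|_\infty$. The martingale has bracket
\begin{equation}
    \frac{2}{m^2}\int_0^\tau\langle (xf')^2,\rho_s^{(m)}\rangle\,ds\le \frac{2\bar\tau}{m^2}\|xf'\|^2_\infty,
\end{equation}
so by BDG $\mathbb E\sup_\tau|M^{(m,f)}_\tau|^2\to0$. Hence each real process $\langle f,\rho^{(m)}\rangle$ is tight in $C([0,\bar\tau])$ with Lipschitz limits.

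\textbf{Tightness of the paths.} I will pick a countable family in $\mathcal F_b$ that is convergence-determining on $\mathcal P(\mathbb R_{\geq0})$, for example $e^{-kx}$ and the real and imaginary parts of $f_z$ for rational $z$. Combined with compact containment, this gives tightness in $C([0,\bar\tau],\mathcal P(\mathbb R_{\geq0}))$, and continuity of subsequential limits is inherited.

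\textbf{Identifying the limit.} By Skorokhod representation, $\rho^{(m)}_s\to\rho_s$ weakly uniformly in $s$. Then $\rho^{(m)}_s\otimes\rho^{(m)}_s\to\rho_s\otimes\rho_s$, and since $H_f$ is bounded and continuous, the double integral converges by bounded convergence in $s$. The $1/m$ term and the martingale vanish, which gives \eqref{eq:limit-weak} for each $f$ almost surely and simultaneously for all $\tau$ by continuity.

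\textbf{Main obstacle.} The hardest step is the uniform boundedness and continuity of $H_f$ up to the boundary point $0$, where mass of $\rho_0$ may sit. The logarithmic substitution above is my first attempt at it. If continuity at $0$ fails, I would fall back to showing that the limit measures put no problematic mass there, or to a truncation argument.
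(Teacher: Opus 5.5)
Your architecture is the paper's. You use the It\^o identity \eqref{eq:finite-m-weak-main}; the first moment as a nonnegative local martingale, with the supermartingale maximal inequality for compact containment; a Lipschitz drift plus BDG for the martingale; the compactness criterion of Anderson, Guionnet, and Zeitouni; and passage to the limit. Your Skorokhod-representation identification is an equivalent substitute for the paper's continuous-mapping argument with $\Psi_f$. Your test family is adequate, since the span of $\{e^{-kx}\}_{k\ge1}$ is dense in $C_0(\mathbb{R}_{\geq0})$ by Stone--Weierstrass. The step that fails as written is the kernel bound, which you rightly flagged as the crux.

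After substituting $s=e^u$ you replace $1/s$ by $\max(1/x,1/y)$. What you actually obtain is
\[
|H_f(x,y)|\le \|x^2f''\|_\infty\,\frac{\max(x,y)\,|\log x-\log y|}{|x-y|}=\|x^2f''\|_\infty\,\frac{\max(x,y)}{L(x,y)},
\]
where $L$ is the logarithmic mean. Since $L(x,y)$ lies strictly between $\min(x,y)$ and $\max(x,y)$, the inequality you attribute to the log-mean inequality points the other way: the ratio is always greater than $1$. For $x=1$, $y=\epsilon$ it equals $\log(1/\epsilon)/(1-\epsilon)\to\infty$. So your estimate blows up exactly as $y/x\to0$, i.e.\ near the boundary point $0$ you were worried about.

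No fallback or truncation is needed; just keep the weight inside the integral. For $0<y<x$,
\[
|f'(x)-f'(y)|\le\|x^2f''\|_\infty\int_y^x s^{-2}\,ds=\|x^2f''\|_\infty\,\frac{x-y}{xy},
\]
which gives $|H_f|\le\|x^2f''\|_\infty$ directly. The paper instead splits into two cases and gets $\|H_f\|_\infty\le 4A_f+4B_f$:
\begin{itemize}
\item the comparable case $x/2\le y\le 2x$, via the mean value theorem and $xy/\xi^2\le 4$;
\item the separated case $|x-y|\ge\max(x,y)/2$, via $\|xf'\|_\infty$.
\end{itemize}

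For continuity at points $(x_0,0)$ with $x_0>0$, what you need is $yf'(y)\to0$ as $y\downarrow0$. This comes from $f'(0)$ being finite, which is part of $f\in C^2(\mathbb{R}_{\geq0})$, not from the bound on $xf'$ alone. At $(0,0)$, boundedness of $f''$ near $0$ gives $|H_f(x,y)|\le xy\sup_{[0,\delta]}|f''|\to0$. With this repaired, mass of $\rho_0$ at $0$ is harmless and the rest of your argument goes through.
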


\begin{proof}
We divide the proof into four steps.

\smallskip
\noindent
\emph{Step 1: $H_f$ is bounded and continuous.}
Fix $f\in\mathcal F_{\mathrm{loc}}$. Continuity away from the diagonal is obvious. On the diagonal, \eqref{eq:meanfield-Hf} gives the continuous extension $H_f(x,x)=x^2 f''(x)$. It remains to show boundedness. Let
\begin{equation}
    A_f := \|x f'(x)\|_\infty \,,
    \qquad
    B_f := \|x^2 f''(x)\|_\infty \,. 
\end{equation}
If $x$ and $y$ are comparable, say $x/2\le y\le 2x$, then by the mean-value theorem,
\begin{equation}
    H_f(x,y)=xy f''(\xi)
\end{equation}
for some $\xi$ between $x$ and $y$, and therefore
\begin{equation}
    |H_f(x,y)|
    \le
    xy \frac{B_f}{\xi^2}
    \le 4 B_f \,. 
\end{equation}
If $x>2y$, then $|x-y|\ge x/2$, hence
\begin{equation}
    |H_f(x,y)|
    \le
    \frac{2xy}{x}\bigl(|f'(x)|+|f'(y)|\bigr)
    \le
    2y|f'(x)|+2y|f'(y)|
    \le
    4A_f \,. 
\end{equation}
The case $y>2x$ is symmetric. Thus
\begin{equation}
    \|H_f\|_\infty \le 4A_f+4B_f \,. 
\end{equation}

\smallskip
\noindent
\emph{Step 2: finite-$m$ semimartingale identity.}
Applying It\^o's formula to $\langle f,\rho_\tau^{(m)}\rangle=m^{-1}\sum_{i=1}^m f(\mu_{i,\tau})$ gives
\begin{equation}
\begin{aligned}
    d\langle f,\rho_\tau^{(m)}\rangle
    &=
    \frac{\sqrt{2}}{m^{3/2}}
    \sum_{i=1}^m
    \mu_{i,\tau}f'(\mu_{i,\tau})\,dB_{i,\tau}
    \\
    &\quad
    +
    \frac{1}{m^2}
    \sum_{i=1}^m
    f'(\mu_{i,\tau})
    \sum_{j\ne i}
    \frac{\mu_{i,\tau}\mu_{j,\tau}}
         {\mu_{i,\tau}-\mu_{j,\tau}}\,d\tau
    \\
    &\quad
    +
    \frac{1}{m^2}
    \sum_{i=1}^m
    \mu_{i,\tau}^2 f''(\mu_{i,\tau})\,d\tau \,.
\end{aligned}
\end{equation}
Define
\begin{equation}
    M_\tau^{(m,f)}
    :=
    \frac{\sqrt{2}}{m^{3/2}}
    \sum_{i=1}^m
    \int_0^\tau \mu_{i,s}f'(\mu_{i,s})\,dB_{i,s} \,.
\end{equation}
Its quadratic variation is
\begin{equation}
    \langle M^{(m,f)}\rangle_\tau
    =
    \frac{2}{m^2}
    \int_0^\tau
    \langle x^2 (f'(x))^2,\rho_s^{(m)}\rangle\,ds \,.
    \label{eq:finite-m-bracket}
\end{equation}

For the drift term, symmetrization yields
\begin{equation}
\begin{aligned}
    \sum_{i=1}^m
    f'(\mu_{i,\tau})
    \sum_{j\ne i}
    \frac{\mu_{i,\tau}\mu_{j,\tau}}
         {\mu_{i,\tau}-\mu_{j,\tau}}
    &=
    \frac{1}{2}\sum_{i\ne j}
    \frac{\mu_{i,\tau}\mu_{j,\tau}
    \bigl(f'(\mu_{i,\tau})-f'(\mu_{j,\tau})\bigr)}
    {\mu_{i,\tau}-\mu_{j,\tau}}
    \\
    &=
    \frac{1}{2}\sum_{i\ne j}
    H_f(\mu_{i,\tau},\mu_{j,\tau}) \,.
\end{aligned}
\end{equation}
Combining the It\^o formula with the symmetrized drift and integrating in time gives
\begin{equation}
\begin{aligned}
    \langle f,\rho_\tau^{(m)}\rangle
    &=
    \langle f,\rho_0^{(m)}\rangle
    +
    M_\tau^{(m,f)}
    +
    \frac{1}{2}
    \int_0^\tau
    \iint_{(\mathbb{R}_{\geq 0})^2}
    H_f(x,y)\,\rho_s^{(m)}(dx)\rho_s^{(m)}(dy)\,ds
    \\
    &\qquad
    +
    \frac{1}{2m}
    \int_0^\tau
    \langle x^2 f''(x),\rho_s^{(m)}\rangle\,ds \,.
\end{aligned}
\label{eq:finite-m-semi}
\end{equation}

\smallskip
\noindent
\emph{Step 3: tightness.}
First we establish compact containment. Let
\begin{equation}
    A_\tau^{(m)}
    :=
    \langle x,\rho_\tau^{(m)}\rangle
    =
    \frac{1}{m}\sum_{i=1}^m \mu_{i,\tau} \,.
\end{equation}
A direct computation from the particle system shows that the drift cancels pairwise:
\begin{equation}
    \sum_{i=1}^m\sum_{j\ne i}
    \frac{\mu_{i,\tau}\mu_{j,\tau}}{\mu_{i,\tau}-\mu_{j,\tau}}=0 \,.
\end{equation}
Hence $A^{(m)}$ is a nonnegative local martingale, and therefore a supermartingale. In particular, $\mathbb{E}[A_\tau^{(m)}]\le A_0^{(m)}=\langle x,\rho_0^{(m)}\rangle$. By the maximal inequality for nonnegative supermartingales,
\begin{equation}
    \mathbb{P}\!\left(\sup_{0\le s\le \bar\tau} A_s^{(m)} > R\right)\le \frac{A_0^{(m)}}{R} \,.
\end{equation}
The sets $K_R:=\{\mu\in\mathcal{P}(\mathbb{R}_{\geq 0}): \int x\,\mu(dx)\le R\}$ are compact in $\mathcal{P}(\mathbb{R}_{\geq 0})$: Markov's inequality gives tightness, and lower semicontinuity of the first moment gives closedness. Hence, by the standing uniform first-moment assumption, the bound above yields the compact-containment estimate needed for tightness.

Next fix $f\in C_c^\infty(\mathbb{R}_{\geq 0})$. Since $H_f$ is bounded, the drift term in \eqref{eq:finite-m-semi} is uniformly Lipschitz in time:
\begin{equation}
    \left|
        \frac{1}{2}\int_r^u \iint H_f\,d\rho_q^{(m)}d\rho_q^{(m)}\,dq
        +
        \frac{1}{2m}\int_r^u \langle x^2 f''(x),\rho_q^{(m)}\rangle\,dq
    \right|
    \le C_f |u-r| \,.
\end{equation}
The martingale term vanishes uniformly in $m$ because \eqref{eq:finite-m-bracket} implies
\begin{equation}
    \mathbb{E}\left[\langle M^{(m,f)}\rangle_{\bar\tau}\right]
    \le
    \frac{2\bar\tau}{m^2}\|x f'(x)\|_\infty^2 \,.
\end{equation}
By the Burkholder--Davis--Gundy inequality,
\begin{equation}
    \mathbb{E}\left[\sup_{0\le r\le \bar\tau}|M_r^{(m,f)}|\right]\longrightarrow 0 \,.
\end{equation}
Therefore the family $(\langle f,\rho_\tau^{(m)}\rangle)_{\tau\in[0,\bar\tau]}$ is tight in $C([0,\bar\tau],\mathbb{R})$ for every $f\in C_c^\infty(\mathbb{R}_{\geq 0})$.

At this point one may repeat the standard compactness argument used in \citet[Proposition 4.3.10 and Lemma 4.3.13]{anderson2010introduction}: compact containment plus tightness of a countable separating family of test-function evaluations implies tightness of $(\rho_\tau^{(m)})_{\tau\in[0,\bar\tau]}$ in $C([0,\bar\tau],\mathcal{P}(\mathbb{R}_{\geq 0}))$.

\smallskip
\noindent
\emph{Step 4: identification of subsequential limits.}
Let $(\rho^{(m_k)})_{k\ge 1}$ be a subsequence converging in law in $C([0,\bar\tau],\mathcal{P}(\mathbb{R}_{\geq 0}))$ to some limit $(\rho_\tau)_{\tau\in[0,\bar\tau]}$. Continuity of evaluation at time zero, together with $\rho_0^{(m_k)}\xrightarrow{d}\rho_0$, implies that the limit path starts from the prescribed measure $\rho_0$ almost surely. Fix $f\in\mathcal F_b$. For a path $\mu\in C([0,\bar\tau],\mathcal{P}(\mathbb{R}_{\geq 0}))$, define $\Psi_f(\mu)\in C([0,\bar\tau],\mathbb R)$ by
\begin{equation}
\begin{aligned}
    \Psi_f(\mu)_\tau
    &:=
    \langle f,\mu_\tau\rangle
    -\langle f,\mu_0\rangle
    -\frac{1}{2}\int_0^\tau
    \iint H_f(x,y)\,\mu_s(dx)\mu_s(dy)\,ds.
\end{aligned}
\end{equation}
Because $f$ and $H_f$ are bounded and continuous, $\Psi_f$ is continuous for the uniform weak topology on path space. The finite-$m$ identity \eqref{eq:finite-m-semi} gives
\begin{equation}
    \Psi_f(\rho^{(m_k)})_\tau
    =
    M_\tau^{(m_k,f)}
    +
    \frac{1}{2m_k}
    \int_0^\tau
    \langle x^2 f''(x),\rho_s^{(m_k)}\rangle\,ds.
\end{equation}

The correction term vanishes uniformly because
\begin{equation}
    \sup_{0\le \tau\le \bar\tau}
    \left|
        \frac{1}{2m_k}
        \int_0^\tau \langle x^2 f''(x),\rho_s^{(m_k)}\rangle\,ds
    \right|
    \le
    \frac{\bar\tau}{2m_k}\|x^2 f''(x)\|_\infty
    \longrightarrow 0 \,. 
\end{equation}
The martingale term also vanishes in probability uniformly on $[0,\bar\tau]$, since
\begin{equation}
    \mathbb{E}\left[\sup_{0\le r\le \bar\tau}|M_r^{(m_k,f)}|\right]\to 0 \,.
\end{equation}
Thus $\Psi_f(\rho^{(m_k)})\to0$ in probability in $C([0,\bar\tau],\mathbb R)$. On the other hand, the continuous mapping theorem gives $\Psi_f(\rho^{(m_k)})\xrightarrow{d}\Psi_f(\rho)$. Hence $\Psi_f(\rho)=0$ almost surely, which is \eqref{eq:limit-weak}. This completes the proof.
\end{proof}

\begin{lemma}[Real Characteristics, Inverse Branch, and Uniqueness]
\label{lem:meanfield-uniqueness}
Let $(\rho_\tau)_{\tau\in[0,\bar\tau]}$ be a continuous path in $\mathcal{P}(\mathbb{R}_{\geq 0})$ satisfying \eqref{eq:limit-weak} for every $f\in\mathcal F_b$. Define
\begin{equation}
    G_\tau(z):=\int_{\mathbb{R}_{\geq 0}}\frac{x}{z-x}\,\rho_\tau(dx),
    \qquad z\in D:=\mathbb{C}\setminus\mathbb{R}_{\geq 0} \,.
    \label{eq:appendix-t-transform}
\end{equation}
For each $0\le \tau\le \bar\tau$ and $u<0$,
\begin{equation}
    G_\tau(u)=G_0\!\left(ue^{-\tau G_\tau(u)}\right).
\end{equation}
Equivalently, set $H_\tau(v):=v e^{\tau G_0(v)}$ for $v<0$. Then $H_\tau$ is a strictly increasing bijection from $(-\infty,0)$ onto $(-\infty,0)$, and
\begin{equation}
    G_\tau(u)=G_0\!\left(H_\tau^{-1}(u)\right),
    \qquad u<0,\quad \tau\in[0,\bar\tau].
    \label{eq:real-inverse-formula}
\end{equation}
If, in addition, $\rho_0$ is compactly supported on $\mathbb{R}_{\geq 0}$ and has nonzero first moment, and if $K_0$ and $K_\tau$ denote the inverse branches of $G_0$ and $G_\tau$ corresponding to $z=\infty$, then, as meromorphic germs at $w=0$,
\begin{equation}
    K_\tau(w)=e^{\tau w}K_0(w).
    \label{eq:inverse-branch-flow-germ}
\end{equation}
Consequently, there is at most one continuous measure-valued path satisfying \eqref{eq:limit-weak} with initial law $\rho_0$.
\end{lemma}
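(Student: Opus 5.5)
Fix $u<0$. The plan is to make the characteristic computation from the proof of \cref{cor:fixed-point-free-flow} rigorous, working only on the negative real axis where everything is real and monotone.

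\emph{Step 1: regularity of $G$ on $(-\infty,0)$.} Apply \eqref{eq:limit-weak} to $f_z(x)=x/(z-x)$ with $z=v<0$. As in the proof of \cref{thm:burgers_fixed_point}, this gives
\[
G_\tau(v)=G_0(v)-\int_0^\tau vG_s(v)\partial_vG_s(v)\,ds .
\]
For $v<0$ and $x\ge 0$ we have $f_v(x)=-x/(|v|+x)\in(-1,0]$, so $G_s(v)\in(-1,0]$. We also have $\partial_v G_s(v)=-\int x(v-x)^{-2}\rho_s(dx)\le 0$, so $G_s$ is nonincreasing in $v$. On compact subsets of $(-\infty,0)$, the quantities $G_s$, $\partial_vG_s$ and $\partial_v^2G_s$ are bounded uniformly in $s$. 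Continuity of $s\mapsto\rho_s$ makes them continuous in $s$. Hence $(\tau,v)\mapsto G_\tau(v)$ is $C^1$ and satisfies the transport PDE classically on $[0,\bar\tau]\times(-\infty,0)$.

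\emph{Step 2: characteristics.} Fix a target $(\tau,u)$ and solve $\dot Z_s=Z_sG_s(Z_s)$ with $Z_\tau=u$, running backward in $s$. The vector field is locally Lipschitz and satisfies $|Z G|\le |Z|$, so $Z_s\in[ue^{\bar\tau}, ue^{-\bar\tau}]$ stays in a compact subset of $(-\infty,0)$ and exists on $[0,\tau]$. Along this curve $\frac{d}{ds}G_s(Z_s)=\partial_sG+Z_sG_s\partial_vG=0$, so $G_s(Z_s)\equiv G_\tau(u)=:w$. It follows that $Z_s=ue^{(s-\tau)w}$, and in particular $Z_0=ue^{-\tau w}$. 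This gives $G_\tau(u)=G_0(ue^{-\tau G_\tau(u)})$.

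Now set $v=Z_0$. Then $u=ve^{\tau w}=ve^{\tau G_0(v)}=H_\tau(v)$. The function $H_\tau$ is strictly increasing on $(-\infty,0)$, since
\[
H_\tau'(v)=e^{\tau G_0}\bigl(1+\tau vG_0'(v)\bigr)
\]
and $vG_0'(v)\ge0$ because both factors are nonpositive. Moreover $H_\tau(v)\to0^-$ as $v\to0^-$ and $H_\tau(v)\to-\infty$ as $v\to-\infty$, since $e^{\tau G_0}\in[e^{-\tau},1]$. So $H_\tau$ is a bijection and \eqref{eq:real-inverse-formula} follows.

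\emph{Step 3: uniqueness.} Formula \eqref{eq:real-inverse-formula} expresses $G_\tau$ on $(-\infty,0)$ through $\rho_0$ alone. The function $G_\tau$ is holomorphic on $D$, so by the identity theorem $G_\tau$ is determined on all of $D$. The $T$-transform determines $\rho_\tau$, because $g=(G+1)/z$ is the Stieltjes transform. Hence any two paths with the same initial law coincide.

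\emph{Step 4: the germ identity.} Assume $\rho_0$ is compactly supported with first moment $m_1>0$. Near $z=\infty$ we have $G_0(z)=m_1/z+O(z^{-2})$, so $K_0$ is a meromorphic germ at $w=0$ with a simple pole. Define $K(w):=e^{\tau w}K_0(w)$. For small real $w<0$, take $v=K_0(w)$, a large negative number. Then $H_\tau(v)=ve^{\tau w}=K(w)$, and Step 2 gives $G_\tau(K(w))=G_0(v)=w$. So $K$ inverts $G_\tau$ on a real interval accumulating at $w=0$.

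To conclude $K=K_\tau$ as germs, I need $G_\tau$ to have a local inverse at $\infty$. This requires $\langle x,\rho_\tau\rangle=m_1\neq0$, which holds because the first moment is conserved: the drift cancellation in Step 3 of \cref{lem:meanfield-technical} persists in the limit. Since $G_\tau(z)\sim m_1/z$, the function $G_\tau$ is locally univalent near $\infty$, and the identity theorem applied to $G_\tau\circ K=\mathrm{id}$ then gives $K=K_\tau$.

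This last step is where I expect the main obstacle. Conservation of the first moment for the limiting path needs justification, because $f(x)=x$ is not in $\mathcal F_b$. I would handle it with a truncation argument. Alternatively, I would read it off from the asymptotics $uG_\tau(u)\to\langle x,\rho_\tau\rangle$ as $u\to-\infty$, using \eqref{eq:real-inverse-formula} together with $H_\tau(v)\sim v$ as $v\to-\infty$.
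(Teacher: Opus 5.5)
Your proposal is correct and follows the paper's proof essentially step for step: the Burgers equation restricted to $(-\infty,0)$, backward real characteristics, strict monotonicity of $H_\tau$, uniqueness via the identity theorem, and the germ identity read off from \eqref{eq:real-inverse-formula}. You are somewhat more careful than the paper about the joint $C^1$ regularity of $(\tau,v)\mapsto G_\tau(v)$ and about confining the characteristics.

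The obstacle you flag at the end is not essential. The statement presupposes the inverse branch $K_\tau$, and so does the paper. Independently of that, your relation $G_\tau(K(w))=w$ on $(-\delta,0)$ extends by the identity theorem to a slit neighbourhood of $w=0$, because $K(w)=e^{\tau w}K_0(w)$ is univalent near $0$. This shows directly that $G_\tau$ extends holomorphically to a neighbourhood of $\infty$ with local inverse $K$. First-moment conservation alone would not give this without compact support of $\rho_\tau$.
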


\begin{proof}
Using $f_z(x)=x/(z-x)$, the transform calculation in the proof of \cref{thm:burgers-fixed-point} applies to \eqref{eq:limit-weak} and gives
\begin{equation}
    \partial_\tau G_\tau(z)=-zG_\tau(z)\partial_zG_\tau(z),
    \qquad z\in D \,.
    \label{eq:appendix-burgers-equation}
\end{equation}
For $u<0$,
\begin{equation}
    -1\le G_s(u)\le 0,
    \qquad
    \partial_uG_s(u)
    =
    -\int_{\mathbb{R}_{\geq 0}}\frac{x}{(u-x)^2}\,\rho_s(dx)\le 0 \,.
    \label{eq:real-axis-bounds}
\end{equation}
Moreover, on compact subintervals of $(-\infty,0)$, the derivative $\partial_uG_s(u)$ is locally bounded uniformly in $s\in[0,\bar\tau]$.

Fix $\tau\in[0,\bar\tau]$ and $u<0$. Let $X_s$, $0\le s\le\tau$, solve
\begin{equation}
    \dot X_s=X_sG_s(X_s),
    \qquad X_\tau=u \,.
    \label{eq:real-characteristic-ode}
\end{equation}
The bounds in \eqref{eq:real-axis-bounds} imply that the characteristic remains in $(-\infty,0)$. Along this curve,
\begin{equation}
\begin{aligned}
    \frac{d}{ds}G_s(X_s)
    &=
    \partial_sG_s(X_s)+\dot X_s\,\partial_xG_s(X_s) \\
    &=
    -X_sG_s(X_s)\partial_xG_s(X_s)
    +X_sG_s(X_s)\partial_xG_s(X_s)
    =0 \,.
\end{aligned}
\label{eq:characteristic-cancellation}
\end{equation}
Thus $G_s(X_s)$ is constant, so $G_\tau(u)=G_0(X_0)$, while the characteristic equation gives $u=X_0e^{\tau G_\tau(u)}$. Therefore
\begin{equation}
    G_\tau(u)=G_0\!\left(u e^{-\tau G_\tau(u)}\right),
    \qquad u<0 \,.
    \label{eq:real-characteristic-formula}
\end{equation}

Now set $H_\tau(v):=v e^{\tau G_0(v)}$ for $v<0$. Since
\begin{equation}
    G_0'(v)
    =
    -\int_{\mathbb{R}_{\geq 0}}\frac{x}{(v-x)^2}\,\rho_0(dx)\le 0 \, ,
    \label{eq:G0-prime-negative}
\end{equation}
we obtain
\begin{equation}
    H_\tau'(v)
    =
    e^{\tau G_0(v)}\left(1+\tau v G_0'(v)\right)>0 \, ,
    \label{eq:Htau-increasing}
\end{equation}
because $v<0$ and $G_0'(v)\le0$. Also, $H_\tau(v)\to-\infty$ as $v\to-\infty$, while $H_\tau(v)\to0$ as $v\uparrow0$. Thus $H_\tau$ is a strictly increasing bijection from $(-\infty,0)$ onto $(-\infty,0)$. Combining this bijectivity with \eqref{eq:real-characteristic-formula} gives \eqref{eq:real-inverse-formula}.

It remains to record the corresponding inverse-branch identity. Assume now that $\rho_0$ is compactly supported and has nonzero first moment. For $v<0$ with $|v|$ sufficiently large, set $w=G_0(v)$. Then $w<0$ is sufficiently close to $0$, and $v=K_0(w)$ on the inverse branch corresponding to $z=\infty$. By \eqref{eq:real-inverse-formula}, with
\begin{equation}
    u=H_\tau(v)=v e^{\tau G_0(v)}=e^{\tau w}K_0(w),
\end{equation}
we have $G_\tau(u)=G_0(v)=w$. Since $u\to-\infty$ as $w\uparrow0$, this is the same large-$z$ inverse branch of $G_\tau$. Hence
\begin{equation}
    K_\tau(w)=e^{\tau w}K_0(w)
\end{equation}
for real $w<0$ sufficiently close to $0$. The two sides are meromorphic germs at $w=0$, so the identity theorem gives \eqref{eq:inverse-branch-flow-germ}.

Now suppose $(\rho_\tau)_{\tau\in[0,\bar\tau]}$ and $(\bar\rho_\tau)_{\tau\in[0,\bar\tau]}$ are two continuous paths satisfying \eqref{eq:limit-weak} with the same initial law $\rho_0$, and let $G_\tau$ and $\bar G_\tau$ be their $T$-transforms. Formula \eqref{eq:real-inverse-formula} gives $G_\tau(u)=\bar G_\tau(u)$ for all $u<0$. For each fixed $\tau$, both transforms are holomorphic on $D$, so the identity theorem gives $G_\tau(z)=\bar G_\tau(z)$ on $D$. The $T$-transform determines the underlying probability measure, since the Cauchy--Stieltjes transform is $(G_\tau(z)+1)/z$. Hence $\rho_\tau=\bar\rho_\tau$ for every $\tau\in[0,\bar\tau]$.
\end{proof}

\section{Support Analysis for the Free Log-Normal}

\begin{lemma}[Support from the Inverse Branch]
\label{lem:support-branch-analysis}
Fix $\tau>0$ and let $\nu_\tau$ be the free log-normal law from \cref{cor:fixed-point-free-flow}. Define $K_\tau(w):=e^{\tau w}\left(1+\frac{1}{w}\right)$ and $w_\pm:=(-1\pm\sqrt{1+4/\tau})/2$. Then
\begin{equation}
    \operatorname{supp}\nu_\tau=[K_\tau(w_-),K_\tau(w_+)].
\end{equation}
\end{lemma}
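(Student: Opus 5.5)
We will read off the support from the analytic structure of the inverse function $K_\tau$, working in the variable $w=G_\tau(z)$. By \cref{cor:fixed-point-free-flow}, $G_\tau$ is the physical branch of the inverse of $K_\tau$: $K_\tau(G_\tau(z))=z$ on $\mathbb{C}\setminus\mathbb{R}_{\geq 0}$, with $G_\tau(z)\sim 1/z$ at infinity. On the negative axis, \cref{lem:meanfield-uniqueness} shows that $G_\tau$ maps $(-\infty,0)$ into $(-1,0)$ monotonically. We first study $K_\tau$ on the real line. The derivative $K_\tau'(w)=e^{\tau w}(\tau w^2+\tau w-1)/w^2$ vanishes exactly at $w_-<-1<0<w_+$, and is positive on $(-\infty,w_-)\cup(w_+,\infty)$ and negative on $(w_-,0)\cup(0,w_+)$. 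Note also that $K_\tau(-1)=0$, that $K_\tau\to -\infty$ as $w\uparrow 0$, and that $K_\tau\to+\infty$ as $w\downarrow 0$. So $K_\tau$ maps $(w_-,0)$ decreasingly onto $(-\infty,K_\tau(w_-))$, with $K_\tau(w_-)>0$ since $w_-<-1$. It also maps $(0,w_+)$ decreasingly onto $(K_\tau(w_+),\infty)$.

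\textbf{Step 1 (outside the interval the measure has no mass).} The real branch $w\in(w_-,0)\cup(0,w_+)$ gives a real-analytic inverse $\widetilde G$ defined on $\mathbb{R}\setminus[K_\tau(w_-),K_\tau(w_+)]$, with $\widetilde G(x)\to 0$ as $|x|\to\infty$. Since $K_\tau'\neq 0$ there, this inverse extends holomorphically to a complex neighbourhood of that set. We then show it coincides with $G_\tau$. On $(-\infty,0)$ both are inverses of $K_\tau$ lying in $(-1,0)\subset(w_-,0)$, hence they are equal. Both are holomorphic on the connected set $\mathbb{C}\setminus[K_\tau(w_-),K_\tau(w_+)]$, so analytic continuation gives $G_\tau=\widetilde G$ there. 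In particular $G_\tau$ is real on the real axis outside the interval. The inversion formula \cref{eq:inversion_formula}, applied in Poisson-kernel form \cref{eq:t_transform_poisson_identity}, then shows that $x\rho$ puts no mass outside $[K_\tau(w_-),K_\tau(w_+)]$. Separately, mass at $0$ is excluded because $G_\tau$ is bounded near $0$ ($G_\tau(0^-)=-1$). This gives $\operatorname{supp}\nu_\tau\subseteq[K_\tau(w_-),K_\tau(w_+)]$.

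\textbf{Step 2 (every point of the interval is in the support).} This is the main obstacle. For $x$ in the open interval $(K_\tau(w_-),K_\tau(w_+))$ the equation $K_\tau(w)=x$ has no real solution near the branch: the interval is exactly the gap of the real range, since $K_\tau$ maps $(-\infty,w_-)$ onto $(-\infty,K_\tau(w_-))$ and $(w_+,\infty)$ onto $(K_\tau(w_+),\infty)$. We argue by contradiction. Suppose $\rho$ vanished on some subinterval. Then $G_\tau$ would extend real-analytically across it, with real boundary values $w(x)$ satisfying $K_\tau(w(x))=x$. There are only two ways this can happen. One is that $w(x)$ lies in $(w_-,0)\cup(0,w_+)$; but then $K_\tau(w(x))$ lies outside the interval, a contradiction. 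The other is that $w(x)$ lies on an outer real piece $(-\infty,w_-)$ or $(w_+,\infty)$. To exclude this, we track the boundary value continuously from an endpoint. The boundary values $G_\tau(x+i0)$ are continuous across the endpoints. Near $w_\pm$ the map $K_\tau$ is a two-to-one fold, so the boundary value must leave the real axis into $\{\operatorname{Im} w<0\}$, as required by $\operatorname{Im}G<0$ in the upper half-plane. By connectedness it cannot return to the real axis at an interior point, since that would require another real solution, i.e.\ another real critical crossing. Hence $\operatorname{Im}G_\tau(x+i0)<0$ on the whole open interval. By \cref{eq:inversion_formula} the density there is strictly positive.

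Combining the two inclusions, the support is the full closed interval $[K_\tau(w_-),K_\tau(w_+)]$. If the continuation argument in Step 2 proves delicate, a fallback is to show directly that every complex preimage of $x$ in the lower half-plane is unique. One would do this with an argument-principle count for $K_\tau(w)-x$ on the region bounded by the curve $\{\operatorname{Im}K_\tau=0\}$ that joins $w_-$ to $w_+$.
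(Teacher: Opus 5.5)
Your route is the paper's: the real range of $K_\tau$, identification of $G_\tau$ with the real inverse branch on $(-\infty,0)$, real-analytic continuation to exclude mass off $[K_\tau(w_-),K_\tau(w_+)]$, and a contradiction for any gap.

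\textbf{Step 2 is not an obstacle.} Write $a_\tau=K_\tau(w_-)$ and $b_\tau=K_\tau(w_+)$. Every real $w\neq0$ satisfies $K_\tau(w)\in(-\infty,a_\tau]\cup[b_\tau,\infty)$. In particular, the outer pieces give $K_\tau((-\infty,w_-))=(0,a_\tau)$ and $K_\tau((w_+,\infty))=(b_\tau,\infty)$. You wrote $(-\infty,a_\tau)$ for the first, but $K_\tau\to0$ as $w\to-\infty$. Both images are disjoint from $(a_\tau,b_\tau)$.

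Suppose $\nu_\tau$ vanished on an open $I\subset(a_\tau,b_\tau)$. Then $G_\tau$ extends across $I$ with real boundary values, which are nonzero because $K_\tau$ has a pole at $0$. These would satisfy $K_\tau(G_\tau(x))=x\in(a_\tau,b_\tau)$, which is impossible whichever real piece $G_\tau(x)$ lies on. Closedness of the support then gives the whole interval. This is exactly the paper's short argument.

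So your second case, the fold/connectedness tracking of $G_\tau(x+i0)$ from the edges, the claim of strictly positive density, and the argument-principle fallback are all unnecessary. As written, the tracking step is also unproved. You assert, but do not show, that boundary values exist and are continuous at the edges, and that the boundary value cannot return to the real axis. Delete this part rather than repair it.

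\textbf{Two smaller fixes in Step 1.} First, you cannot assert that $G_\tau$ is holomorphic on $\mathbb{C}\setminus[a_\tau,b_\tau]$; that is what is being proved. Also, $\widetilde G$ is only defined near the real set. Instead, take a thin neighbourhood $N$ of the arc $(\mathbb{R}\setminus[a_\tau,b_\tau])\cup\{\infty\}$ of the Riemann sphere on which $\widetilde G$ is holomorphic, chosen so that $N\cap\{\operatorname{Im}z>0\}$ and $N\cap\{\operatorname{Im}z<0\}$ are connected. At $\infty$, $\widetilde G$ is holomorphic in $1/z$ because $K_\tau$ has a simple pole at $w=0$. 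Apply the identity theorem on each of these two sets. Agreement on $(-\infty,0)$ then propagates and gives real boundary values on $(0,a_\tau)\cup(b_\tau,\infty)$. The paper does the same thing piecewise: it pins the branch on $(b_\tau,\infty)$ through $G_\tau\sim1/z$, and continues from the negative axis through $0$ using $K_\tau'(-1)=-e^{-\tau}\neq0$.

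Second, boundedness of $G_\tau$ near $0$ says nothing about an atom at $0$, since $x/(z-x)$ vanishes at $x=0$. The correct argument is that dominated convergence gives $\lim_{z\uparrow0}G_\tau(z)=-\nu_\tau((0,\infty))$, which must equal the branch value $-1$. Equivalently, analyticity of $G_\tau$ at $0$ with $G_\tau(0)=-1$ makes the Cauchy transform $(G_\tau(z)+1)/z$ bounded there.
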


\begin{proof}
Let $G_\tau(z)=\int_{\mathbb{R}_{\geq 0}} x(z-x)^{-1}\nu_\tau(dx)$ be the $T$-transform of $\nu_\tau$. By \cref{eq:fixed_point}, $G_\tau$ is the physical branch of the fixed-point equation, so $K_\tau(G_\tau(z))=z$ on $\mathbb C\setminus\mathbb{R}_{\geq 0}$ and $G_\tau(z)\sim1/z$ as $z\to\infty$. Set $a_\tau:=K_\tau(w_-)$ and $b_\tau:=K_\tau(w_+)$.
The derivative is
\begin{equation}
    K_\tau'(w)=e^{\tau w}\frac{\tau w^2+\tau w-1}{w^2}.
\end{equation}
The two critical points are $w_-<-1$ and $w_+>0$. Moreover, using the critical-point identity $\tau w(w+1)=1$, one has $K_\tau(w)=e^{\tau w}/(\tau w^2)$ at critical points, and therefore $a_\tau<b_\tau$.

We first record the real range of $K_\tau$. On $(-\infty,-1]$, the function $K_\tau$ is positive, tends to $0$ at both endpoints $w=-\infty$ and $w=-1$, and has its unique maximum at $w_-$. Hence
\begin{equation}
    K_\tau((-\infty,-1])=[0,a_\tau].
\end{equation}
On $(-1,0)$, the function is negative and maps this interval onto $(-\infty,0)$. On $(0,\infty)$, the function is positive, tends to $+\infty$ as $w\downarrow0$ and as $w\to\infty$, and has its unique minimum at $w_+$. Hence
\begin{equation}
    K_\tau((0,\infty))=[b_\tau,\infty).
\end{equation}
Consequently,
\begin{equation}
    K_\tau(\mathbb R\setminus\{0\})=(-\infty,a_\tau]\cup[b_\tau,\infty).
\end{equation}

We now use the physical inverse branch. Since $G_\tau(z)\sim 1/z$ as $z\to\infty$, for large positive real $z$ the boundary value of $G_\tau(z)$ lies in $(0,w_+)$. The inverse branch of $K_\tau$ on $(0,w_+)$ maps $(b_\tau,\infty)$ real-analytically into $(0,w_+)$. Therefore $G_\tau$ has real boundary values on $(b_\tau,\infty)$, and so
\begin{equation}
    \operatorname{supp}\nu_\tau\cap(b_\tau,\infty)=\varnothing.
\end{equation}

For $z<0$, the integral representation of the $T$-transform gives $G_\tau(z)\in[-1,0)$. The physical branch is the inverse branch on $(-1,0)$, which maps $(-\infty,0)$ real-analytically into $(-1,0)$. As $z\uparrow0$, this branch tends to $w=-1$. Since $K_\tau(-1)=0$ and $K_\tau'(-1)=-e^{-\tau}\neq0$, the inverse branch continues real-analytically through $0$ into the interval $(0,a_\tau)$, with values in $(w_-,-1)$. Hence
\begin{equation}
    \operatorname{supp}\nu_\tau\cap(0,a_\tau)=\varnothing.
\end{equation}
The same limit excludes an atom at $0$: if $p=\nu_\tau(\{0\})$, then the integral representation gives $G_\tau(z)\to -(1-p)$ as $z\uparrow0$, while the inverse branch gives $G_\tau(z)\to -1$, so $p=0$. Together with the absence of mass on $(0,a_\tau)$, this shows that $0\notin\operatorname{supp}\nu_\tau$.

We have shown
\begin{equation}
    \operatorname{supp}\nu_\tau\subseteq[a_\tau,b_\tau].
\end{equation}
It remains to rule out gaps. Suppose that a nonempty open interval $I\subset(a_\tau,b_\tau)$ is disjoint from $\operatorname{supp}\nu_\tau$. Then $G_\tau$ extends real-analytically through $I$, and its boundary values on $I$ are real. The identity $K_\tau(G_\tau(z))=z$ continues across $I$, so
\begin{equation}
    K_\tau(G_\tau(x))=x,
    \qquad x\in I.
\end{equation}
This contradicts the displayed real-range identity above, because $G_\tau(x)$ is real while $x\in(a_\tau,b_\tau)$. Thus no open subinterval of $(a_\tau,b_\tau)$ is disjoint from the support. Since the support is closed, it follows that
\begin{equation}
    \operatorname{supp}\nu_\tau=[a_\tau,b_\tau]
    =[K_\tau(w_-),K_\tau(w_+)].
\end{equation}
\end{proof}

\section{Calculations for the Toy Random-Feature Model}
\label{sec:appendix-toy-ridge-risk}

This appendix records the elementary Gaussian and transform identities used in \cref{subsec:theory-practice-generalization}.

\begin{lemma}[Conditional Ridge Risk]
\label{lem:appendix-conditional-risk-general}
Fix a symmetric positive semidefinite matrix \(\Phi\in\mathbb R^{m\times m}\), and let \(\gamma>0\). Set \(A_\gamma:=\Phi(\Phi+\gamma I_m)^{-1}\). If \(f\mid\Phi\sim\mathcal N(0,C(\Phi))\), \(\varepsilon\sim\mathcal N(0,\sigma_\varepsilon^2I_m)\), and \(y=f+\varepsilon\), with \(f\) and \(\varepsilon\) conditionally independent given \(\Phi\), then
\begin{equation}
\label{eq:appendix-conditional-risk-general}
    \frac{1}{m}\mathbb E\!\left[\|A_\gamma y-f\|_2^2\mid\Phi\right]
    =
    \frac{1}{m}\Tr\!\left[
        \gamma^2C(\Phi)(\Phi+\gamma I_m)^{-2}
        +
        \sigma_\varepsilon^2\Phi^2(\Phi+\gamma I_m)^{-2}
    \right].
\end{equation}
\end{lemma}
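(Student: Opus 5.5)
The plan is a direct bias--variance decomposition conditional on $\Phi$. Write $A_\gamma y - f = (A_\gamma - I_m) f + A_\gamma \varepsilon$. Since $\Phi$ and $(\Phi+\gamma I_m)^{-1}$ commute, we have
\begin{equation}
    A_\gamma - I_m = \bigl(\Phi - (\Phi+\gamma I_m)\bigr)(\Phi+\gamma I_m)^{-1} = -\gamma(\Phi+\gamma I_m)^{-1}.
\end{equation}
This matrix is symmetric, as is $A_\gamma$, because both are functions of the symmetric matrix $\Phi$. Note also that $\Phi+\gamma I_m$ is invertible because $\Phi\succeq 0$ and $\gamma>0$.

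Expanding the squared norm gives three terms. The cross term $-2\gamma\,\mathbb E[f^\top (\Phi+\gamma I_m)^{-1} A_\gamma \varepsilon\mid\Phi]$ vanishes, since $f$ and $\varepsilon$ are conditionally independent given $\Phi$ and $\varepsilon$ is centered. For the remaining two quadratic forms I use the identity $\mathbb E[v^\top M v]=\Tr(M\,\mathrm{Cov}(v))$ for a centered vector $v$.

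\begin{itemize}
    \item The bias term is $\gamma^2\Tr\bigl((\Phi+\gamma I_m)^{-2}C(\Phi)\bigr)$. Cyclicity of the trace rewrites it as $\gamma^2\Tr\bigl(C(\Phi)(\Phi+\gamma I_m)^{-2}\bigr)$.
    \item The noise term is $\sigma_\varepsilon^2\Tr(A_\gamma^\top A_\gamma)=\sigma_\varepsilon^2\Tr\bigl(\Phi^2(\Phi+\gamma I_m)^{-2}\bigr)$, again using that $\Phi$ and $(\Phi+\gamma I_m)^{-1}$ commute.
\end{itemize}

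Dividing by $m$ gives \eqref{eq:appendix-conditional-risk-general}.

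There is no real obstacle here. The only points to check are the commutation of $\Phi$ with its resolvent, which also gives the symmetry of $A_\gamma$, and the vanishing of the cross term by conditional independence.
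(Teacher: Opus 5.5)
Your proposal is correct and follows the paper's proof essentially verbatim: the same decomposition $A_\gamma y-f=(A_\gamma-I_m)f+A_\gamma\varepsilon$ with $A_\gamma-I_m=-\gamma(\Phi+\gamma I_m)^{-1}$, the same vanishing of the cross term by conditional independence and centering, and the same trace evaluation using cyclicity and commutation with the resolvent. Your added remarks that $\Phi+\gamma I_m$ is invertible and that $A_\gamma$ is symmetric are small details the paper leaves implicit.
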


\begin{proof}
Since \(A_\gamma-I_m=-\gamma(\Phi+\gamma I_m)^{-1}\), we have
\begin{equation}
\label{eq:appendix-risk-decomposition}
    A_\gamma y-f
    =
    (A_\gamma-I_m)f+A_\gamma\varepsilon.
\end{equation}
The cross term has conditional expectation zero by conditional independence and centering. Hence
\begin{equation}
\label{eq:appendix-risk-trace-proof}
\begin{aligned}
    \mathbb E\!\left[\|A_\gamma y-f\|_2^2\mid\Phi\right]
    &=
    \Tr\!\left[
        (A_\gamma-I_m)C(\Phi)(A_\gamma-I_m)
    \right]
    +
    \sigma_\varepsilon^2\Tr(A_\gamma^2) \\
    &=
    \Tr\!\left[
        \gamma^2C(\Phi)(\Phi+\gamma I_m)^{-2}
        +
        \sigma_\varepsilon^2\Phi^2(\Phi+\gamma I_m)^{-2}
    \right],
\end{aligned}
\end{equation}
where the last step uses cyclicity of the trace and that \(A_\gamma\) is a function of \(\Phi\).
\end{proof}

\begin{lemma}[Spectral Reduction]
\label{lem:appendix-feature-teacher-spectral-risk}
In the setting of \cref{lem:appendix-conditional-risk-general}, take \(C(\Phi)=\sigma_1^2\Phi\). If \(\rho_\Phi=m^{-1}\sum_i\delta_{\lambda_i(\Phi)}\), then
\begin{equation}
\label{eq:appendix-feature-teacher-spectral-risk}
    \frac{1}{m}\mathbb E\!\left[\|A_\gamma y-f\|_2^2\mid\Phi\right]
    =
    \int_{\mathbb R_{\geq0}}
    \frac{\sigma_1^2\gamma^2x+\sigma_\varepsilon^2x^2}{(x+\gamma)^2}\,
    \rho_\Phi(dx).
\end{equation}
Consequently, if the empirical spectral law converges weakly to \(\rho_\tau\) on the matched spectral-time scale and \(\gamma>0\) is fixed, then \eqref{eq:appendix-feature-teacher-spectral-risk} converges to \eqref{eq:limiting-spectral-risk}.
\end{lemma}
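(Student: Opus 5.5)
The first identity follows directly from \cref{lem:appendix-conditional-risk-general} with $C(\Phi)=\sigma_1^2\Phi$. I would first diagonalize $\Phi=U\diag(\lambda_1,\ldots,\lambda_m)U^\top$ with $U$ orthogonal. Every matrix inside the trace in \eqref{eq:appendix-conditional-risk-general} is then a function of $\Phi$, so it is conjugated by the same $U$. By cyclicity of the trace, the right-hand side becomes
\begin{equation}
    \frac{1}{m}\sum_{i=1}^m \frac{\sigma_1^2\gamma^2\lambda_i+\sigma_\varepsilon^2\lambda_i^2}{(\lambda_i+\gamma)^2}.
\end{equation}
This equals $\int \varphi_\gamma\,d\rho_\Phi$, where
\begin{equation}
    \varphi_\gamma(x):=\frac{\sigma_1^2\gamma^2x+\sigma_\varepsilon^2x^2}{(x+\gamma)^2},
\end{equation}
which proves \eqref{eq:appendix-feature-teacher-spectral-risk}.

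For the convergence statement, I would observe that $\varphi_\gamma$ is continuous on $\mathbb R_{\geq0}$ when $\gamma>0$, since the denominator is at least $\gamma^2$. It is also bounded. The first term satisfies $\sigma_1^2\gamma^2 x/(x+\gamma)^2\le \sigma_1^2\gamma/4$, because $x/(x+\gamma)^2$ is maximized at $x=\gamma$. The second term satisfies $\sigma_\varepsilon^2x^2/(x+\gamma)^2\le\sigma_\varepsilon^2$. Hence $\varphi_\gamma\in C_b(\mathbb R_{\geq0})$. Weak convergence of the empirical spectral law to $\rho_\tau$ then gives convergence of $\int\varphi_\gamma\,d\rho_\Phi$ to $\int\varphi_\gamma\,d\rho_\tau$, which is \eqref{eq:limiting-spectral-risk}.

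There is one point to address. In \cref{thm:burgers_fixed_point} the convergence holds in probability, in $C([0,\bar\tau],\mathcal P(\mathbb R_{\geq0}))$. I would therefore read ``converges'' in the same mode. Evaluation at time $\tau$ followed by integration against the bounded continuous function $\varphi_\gamma$ is a continuous map, so the continuous mapping theorem shows the conditional risk converges in probability to the deterministic limit $\mathcal E_\tau(\gamma)$. The conditional risk is bounded by $\sigma_1^2\gamma/4+\sigma_\varepsilon^2$ uniformly in $m$. Bounded convergence therefore also gives convergence of the unconditional expected risk.

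Finally, I would optionally check the transform form \eqref{eq:limiting-risk-transform}. Evaluating at $z=-\gamma$ gives $-\partial_zG(-\gamma)=\int x(x+\gamma)^{-2}\,d\rho$. Writing $x^2/(x+\gamma)^2=x/(x+\gamma)-\gamma x/(x+\gamma)^2$ and using $-G(-\gamma)=\int x/(x+\gamma)\,d\rho$ recovers the stated combination. I expect no real obstacle; the only care needed is in fixing the mode of convergence.
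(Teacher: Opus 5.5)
Your proposal is correct and follows the paper's proof: diagonalize $\Phi$ to turn the normalized trace from \cref{lem:appendix-conditional-risk-general} into $\int\varphi_\gamma\,d\rho_\Phi$, then use that $\varphi_\gamma$ is bounded and continuous for $\gamma>0$ to pass to the weak limit. Your explicit bound $\sigma_1^2\gamma/4+\sigma_\varepsilon^2$ and your treatment of convergence in probability via the continuous mapping theorem and bounded convergence are correct details that the paper leaves implicit.
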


\begin{proof}
Diagonalize \(\Phi\). Since \(C(\Phi)=\sigma_1^2\Phi\), all matrices inside the trace in \eqref{eq:appendix-conditional-risk-general} are diagonal in the same eigenbasis. Therefore the normalized trace is the integral in \eqref{eq:appendix-feature-teacher-spectral-risk}. The integrand is bounded and continuous on \(\mathbb R_{\geq0}\) for fixed \(\gamma>0\), so weak convergence of the empirical spectral laws gives the limiting formula.
\end{proof}

\begin{lemma}[\(T\)-Transform Risk Identities]
\label{lem:appendix-risk-transform-identities}
Let \(G_\tau(z)=\int x(z-x)^{-1}\rho_\tau(dx)\). For \(\gamma>0\),
\begin{equation}
\label{eq:appendix-risk-transform-identities}
\begin{aligned}
    \int\frac{\gamma^2}{(x+\gamma)^2}\,\rho_\tau(dx)
    &=
    1+G_\tau(-\gamma)+\gamma\partial_zG_\tau(-\gamma), \\
    \int\frac{x}{(x+\gamma)^2}\,\rho_\tau(dx)
    &=
    -\partial_zG_\tau(-\gamma), \\
    \int\frac{x^2}{(x+\gamma)^2}\,\rho_\tau(dx)
    &=
    -G_\tau(-\gamma)+\gamma\partial_zG_\tau(-\gamma).
\end{aligned}
\end{equation}
Consequently, for the simplified feature-teacher model, \eqref{eq:limiting-spectral-risk} is equivalent to \eqref{eq:limiting-risk-transform}.
\end{lemma}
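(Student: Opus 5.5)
The plan is to reduce all three identities to the integral representations of $G_\tau$ and $\partial_z G_\tau$ evaluated at the negative real point $z=-\gamma$, which lies in $\mathbb{C}\setminus\mathbb{R}_{\geq 0}$. At this point both transforms are given by absolutely convergent integrals, since $x\mapsto x/(x+\gamma)$ and $x\mapsto x/(x+\gamma)^2$ are bounded on $\mathbb{R}_{\geq 0}$. First, substituting $z=-\gamma$ into the definition gives
\begin{equation*}
G_\tau(-\gamma)=-\int\frac{x}{x+\gamma}\,\rho_\tau(dx).
\end{equation*}
Second, differentiating under the integral, which is justified by domination on a neighbourhood of $-\gamma$ as in \eqref{eq:real-axis-bounds}, gives
\begin{equation*}
\partial_zG_\tau(-\gamma)=-\int\frac{x}{(x+\gamma)^2}\,\rho_\tau(dx).
\end{equation*}
This is exactly the second identity.

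The remaining two identities then follow from partial-fraction rewritings of the integrands, with only the first two moments of the kernel appearing.

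For the third identity, I write
\begin{equation*}
\frac{x^2}{(x+\gamma)^2}=\frac{x}{x+\gamma}-\frac{\gamma x}{(x+\gamma)^2}.
\end{equation*}
Integrating, this equals $-G_\tau(-\gamma)+\gamma\,\partial_zG_\tau(-\gamma)$.

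For the first identity, I use
\begin{equation*}
\frac{\gamma^2}{(x+\gamma)^2}=1-\frac{2\gamma x+x^2}{(x+\gamma)^2}=1-\frac{x}{x+\gamma}-\frac{\gamma x}{(x+\gamma)^2},
\end{equation*}
where the last step uses the third-identity decomposition. Integrating against the probability measure $\rho_\tau$ gives $1+G_\tau(-\gamma)+\gamma\,\partial_zG_\tau(-\gamma)$.

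Finally, for the equivalence with \eqref{eq:limiting-risk-transform}, I split the integrand of \eqref{eq:limiting-spectral-risk} into $\sigma_1^2\gamma^2\cdot x/(x+\gamma)^2$ and $\sigma_\varepsilon^2\cdot x^2/(x+\gamma)^2$, and then substitute the second and third identities. No step is a genuine obstacle. The only point needing care is the sign convention at $z=-\gamma$, namely that $z-x=-(x+\gamma)$, and the justification for differentiating under the integral; both are routine because $\gamma>0$ keeps $z$ a positive distance from the support.
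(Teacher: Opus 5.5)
Your proposal is correct and follows the paper's proof: you get the second identity from $\partial_zG_\tau(z)=-\int x(z-x)^{-2}\,\rho_\tau(dx)$ at $z=-\gamma$. The first and third identities come from the same two partial-fraction decompositions the paper uses, $\gamma^2/(x+\gamma)^2=1-x/(x+\gamma)-\gamma x/(x+\gamma)^2$ and $x^2/(x+\gamma)^2=x/(x+\gamma)-\gamma x/(x+\gamma)^2$, integrated against $\rho_\tau$. Your remark justifying differentiation under the integral sign is a small detail the paper leaves implicit.
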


\begin{proof}
The derivative identity
\begin{equation}
\label{eq:appendix-G-derivative-risk}
    \partial_zG_\tau(z)
    =
    -\int\frac{x}{(z-x)^2}\,\rho_\tau(dx)
\end{equation}
gives the second line of \eqref{eq:appendix-risk-transform-identities} after setting \(z=-\gamma\). The first and third lines follow from
\begin{equation}
\label{eq:appendix-risk-algebra}
    \frac{\gamma^2}{(x+\gamma)^2}
    =
    1-\frac{x}{x+\gamma}-\frac{\gamma x}{(x+\gamma)^2},
    \qquad
    \frac{x^2}{(x+\gamma)^2}
    =
    \frac{x}{x+\gamma}-\frac{\gamma x}{(x+\gamma)^2}.
\end{equation}
Combining these identities with the simplified feature-teacher risk in \eqref{eq:limiting-spectral-risk} gives \eqref{eq:limiting-risk-transform}.
\end{proof}

\end{document}